\newcommand{\bm}[1]{\boldsymbol{#1}}
\newcommand{\bmr}[1]{\bm{\mr{#1}}}
\newcommand{\lj}{[ \hspace{-2pt} [}
\newcommand{\rj}{] \hspace{-2pt} ]}
\newcommand{\mb}[1]{\mathbb{#1}}
\newcommand{\mc}[1]{\mathcal{#1}}
\newcommand{\mr}[1]{\mathrm{#1}}
\newcommand{\jump}[1]{\lj #1 \rj}
\newcommand{\aver}[1]{ \{#1\}  }
\newcommand{\wt}[1]{ \widetilde{ #1}}
\newcommand{\tr}[1]{\ifmmode \mathrm{tr}\left( #1 \right) \else 
\text{tr} \left( #1 \right) \fi }
\newcommand{\vect}[2]{ \begin{bmatrix} #1 \\ #2 \\ \end{bmatrix}}
\newcommand{\vech}[3]{ \begin{bmatrix} #1 \\ #2 \\ #3 \\ \end{bmatrix}}
\newcommand\curl{\ifmmode \mathrm{curl} \else \text{curl}\fi}
\renewcommand\det{\ifmmode \mathrm{det} \else \text{det} \fi}
\newcommand\cof{\ifmmode \mathrm{cof} \else \text{cof} \fi}
\def\MA{Monge-Amp\`ere }
\newcommand\MTh{\mc{T}_h}
\newcommand\MEh{\mc{E}_h}
\newcommand\un{\bm{\mr n}}
\renewcommand{\d}[1]{\mathrm d \boldsymbol{#1}}
\newcommand\pnorm[1]{\| #1 \|_{\bm{\mathrm{p}}}}
\newcommand\unorm[1]{\| #1 \|_{\bm{\mathrm{u}}}}
\newcommand\enorm[1]{|\!|\!| #1 |\!|\!|}
\newtheorem{assumption}{Assumption}
\newtheorem{theorem}{Theorem}
\newtheorem{lemma}{Lemma}
\newtheorem{remark}{Remark}
\title[Reconstructed Discontinuous Approximation for Monge-Amp\`ere
Equation]{A Reconstructed Discontinuous Approximation to Monge-Amp\`ere
Equation in Least Square Formation}
\author[R. Li]{Ruo Li} \address{CAPT, LMAM and School of Mathematical
  Sciences, Peking University, Beijing 100871, P.R. China}
\email{rli@math.pku.edu.cn}
\author[F.-Y. Yang]{Fanyi Yang} \address{School of Mathematical
  Sciences, Peking University, Beijing 100871, P.R. China}
\email{yangfanyi@pku.edu.cn}
\begin{document}

% vim:spell:tw=70:fo+=Mn:cc=70
\begin{abstract} 
  We propose a numerical method to solve the \MA equation which admits
  a classical convex solution. The \MA equation is reformulated into
  an equivalent first-order system. We adopt a novel reconstructed
  discontinuous approximation space which consists of piecewise
  irrotational polynomials. This space allows us to solve the first
  order system in two sequential steps. In the first step, we solve a
  nonlinear system to obtain the approximation to the gradient. A
  Newton iteration is adopted to handle the nonlinearity in the
  system. The approximation to the primitive variable is obtained from
  the approximate gradient by a trivial least squares finite element
  method in the second step. Numerical examples in both two dimensions
  and three dimensions are presented to show an optimal convergence
  rate in accuracy. It is interesting to observe that the
  approximate solution is piecewise convex in each element.
  Particularly, with the reconstructed approximation space, the
  proposed method demonstrates a remarkable robustness. The
  convergence of the Newton iteration does not rely on the initial
  values, which shows a very different behaviour from references. The
  dependence of the convergence on the penalty parameter in the
  discretization is also negligible, in comparison to the classical
  discontinuous approximation space.

  \noindent \textbf{keywords}: \MA equation, Least squares method,
  Reconstructed discontinuous approximation. 
\end{abstract}

%%% Local Variables:
%%% mode: latex
%%% TeX-master: "article"
%%% End:

\maketitle

% vim:spell:tw=70:fo+=Mn:cc=70
\section{Introduction}
\label{sec:introduction}
The elliptic \MA equation is a fully nonlinear second-order partial
differential equation, which arises naturally from geometric surface
theory and from the applications such as optimal mass transportation,
kinetic theory, geometric optics, image processing and others, and we
refer to \cite{Caffarelli1995fully, Caffarelli2004recent,
  Gutierrez2001progress} and the references therein for an extensive
review of applications. Recently, the numerical scheme for solving the
elliptic \MA equation has been a subject of particular interests
\cite{Benamou2010numerical}. It is known that the classical solution
to the \MA equation is strictly convex on the domain with the positive
source term. Hence, the \MA equation is challenging to solve
numerically due to its full nonlinearity and the convex constraint.
We refer to the review papers \cite{Feng2013recent,
  Neilan2017numerical} for an overview of the numerical challenges and
the history of the work on this problem.

In 1988, Prussner and Oliker introduced a finite difference scheme in
\cite{Oliker1988numerical} for the \MA equation. The discretization
was based on the geometric interpretation of the equation and they
proved that the method converges to the generalized solution in two
dimensions. Froese and Oberman proposed a convergent monotone finite
difference scheme by constructing a wide stencil. We refer to
\cite{Froese2011covergent, Oberman2008wide, Benamou2016monotone,
Chen2018monotone} for more discussion and some improvements on the
wide stencil scheme. Another simple finite difference method was
proposed in \cite{Benamou2010numerical} but the proof of convergence
remains an open problem. Galerkin-type methods have also been
investigated for the \MA equation and an immediate challenge is the
problem does not naturally fit within the Galerkin framework
\cite{Chen2018monotone}. B\"ohmer introduced an $L^2$ projection
method in \cite{Bohmer2008finite} by applying the $C^1$ finite element
spaces. Brenner et al. \cite{Brenner2011fully} proposed a $C^0$ finite
element method. They proposed a discrete linearization which is
consistent with continuous linearization. Dean and Glowinski
\cite{Dean2006augmented, Dean2006numerical} reformulated the \MA
equation as a minimization problem by applying the augmented
Lagrangian method. The minimization problem can then be solved with
mixed finite element methods. Feng and Neilan added a small multiple
of the biharmonic operator to the \MA equation. The resulted
fourth-order PDE is solved by mixed finite element methods
\cite{Feng2009vanishing, Feng2009mixed}. Besides, there are some least
squares finite element methods proposed for the \MA equation and we
refer to \cite{Westphal2019newton, Caboussat2018least,
Caboussat2013least}.

In this paper, we propose a new least squares finite element method
for solving the \MA equation with classical solutions. As a
preparation, we reformulate the \MA equation into an equivalent
first-order system and we solve the first-order problem in two
sequential steps \cite{li2019sequential, li2019nondivergence}. In the
first step, we solve a nonlinear first-order system to obtain the
approximation to the gradient by a piecewise irrotational polynomial
space. This space is obtained by patch reconstruction with only one
unknown per element \cite{li2016discontinuous}. The second step is to
solve a linear first-order system to seek a numerical approximation to
the primitive variable.

The numerical scheme to the first nonlinear problem is the main
component in our method. We first employ a standard Newton-type
linearization to the nonlinear problem at the continuous level. In
each nonlinear step, we will solve an elliptic problem in
non-divergence form. In the discrete level, we define a least
squares functional for the non-divergence form problem and we minimize
this functional on the reconstructed space to seek a numerical
solution at each iteration, and we then update the numerical solution
for the next step via Newton method. In the second step, we introduce
another least squares functional to solve the linear problem. This
functional is then minimized in the Lagrange finite element space,
together with the numerical gradient from the first step, to seek a
numerical solution to the primitive variable. For this linear
first-order system, we present the error estimate which is verified by
the numerical tests.

By numerical examples in two and three dimensions, it is clear that
the numerical solutions achieve the optimal convergence order in
accuracy. It is very interesting for us to observe that in numerical
tests the numerical solution in the reconstructed space is
automatically piecewise convex, which meets the convex constraint of
the classical solution to the \MA equation. Particularly, the residual
in each Newton iteration has a certain kind of mysterious relation to
the piecewise convexity of the numerical solution. At the first
nonlinear iterations, the residual has a slow decreasing since there
are a lot of elements where the solution is non-convex. Once the
solution on most of elements is convex, the Newton iteration rushes to
meet the stop criterion in only a few steps.

The numerical tests demonstrate a remarkable robustness of the method,
which achieves a very rapid convergence of the nonlinear iteration and
it is insensitive to the initial value. Notice that the methods in the
references often require an initial value quite close to the exact
solution \cite{Brenner2011fully, Froese2011covergent,
Feng2009vanishing} and providing such an initial guess can be as
difficult as solving the nonlinear system itself
\cite{Feng2013recent}. The insensitivity to the initial value of our
method may be an attractive feature.

We make a comparison by replacing the reconstructed discontinuous
space with an approximation space that is used in standard
discontinuous Galerkin method \cite{li2019nondivergence}. It is a
surprise for us that the robustness we enjoyed is gone. Using the
space in standard discontinuous Galerkin method, the convergence of
the Newton iteration begins to rely on if the initial value is close
to the exact solution. Even if we already have a ``good'' initial
value, we can only get back a converged Newton iteration by tuning the
penalty parameters in the discretization. Such behaviours indicate us
that the robustness of our method may be largely due to the use of the
reconstructed discontinuous space. Underlying reason sounds an
interesting problem which requires our future study.

The rest of this paper is organized as follows. In Section
\ref{sec:preliminaries}, we define the notations that will be used
throughout the paper. In Section \ref{sec:reconstructedspace}, we
introduce the reconstructed approximation space and give some basic
properties of this space. In Section \ref{sec:scheme}, we present the
details on the numerical approach for the first nonlinear system and
the second linear system. In Section \ref{sec:numericalresult}, we
provide a series of numerical examples in both two dimensions and
three dimensions to show the convergence results and illustrate the
great robustness of our method. We also present two numerical
evidences in Section \ref{sec:comparison} to show the compelling
features of the reconstructed space, comparing the results that are
obtained with the standard discontinuous piecewise polynomial space. A
short conclusion remark ends the main part of this paper. The details
on the construction of the reconstructed space are presented in the
appendix.

%%% Local Variables:
%%% mode: latex
%%% TeX-master: "article"
%%% End:

% vim:spell:tw=70:fo+=Mn:cc=70
\section{Preliminaries}
\label{sec:preliminaries}
Let $\Omega$ be a convex polygonal (polyhedral) domain in $\mb{R}^d (d
= 2, 3)$ with the boundary $\partial \Omega$. We denote by $\MTh$ a
regular and shape-regular triangular (tetrahedral) partition of the
domain $\Omega$ into triangles (tetrahedrons). We denote by $\MEh^i$
the set of all $d - 1$ dimensional interior faces in $\MTh$ and by
$\MEh^b$ the set of all $d - 1$ dimensional faces lying on the
boundary $\partial \Omega$. We then define $\MEh = \MEh^i \cup \MEh^b$
as the set consisting of all faces in partition. Further, we set the
diameter of the element $K \in \MTh$ as $h_K$ and set the size of the
face $e \in \MEh$ as $h_e$. We denote by $h = \max_{K \in \MTh} h_K$
the mesh size of the partition $\MTh$ and the shape-regularity of the
partition $\MTh$ reads: for each element $K \in \MTh$, its diameter
$h_K$ is bounded with a constant $\sigma$, 
\begin{displaymath}
  h_K \leq \sigma \rho_K,
\end{displaymath}
where $\rho_K$ denotes the radius of the largest disk (ball) inscribed
in $K$

Then we introduce the notations associated with weak formulations. Let
$K^+$ and $K^-$ be two adjacent elements that share a common face $e =
K^+ \cap K^-$. Let $\un^+$ and $\un^-$ be the outward unit normal on
$\partial K^+$ and $\partial K^-$, respectively. For the scalar-valued
function $v$ and that vector-valued function $\bm{q}$ that may be
discontinuous across interelement boundaries, we let  $v^+ := v|_{e
\subset \partial K^+}$, $v^- := v|_{e \subset \partial K^-}$,
$\bm{q}^+ := \bm{q}|_{e \subset \partial K^+}$, $\bm{q}^- :=
\bm{q}|_{e \subset \partial K^-}$ and further we define the average
operator $\aver{ \cdot}$ and the jump operator $\jump{ \cdot}$ as
\begin{displaymath}
  \aver{v} := \frac{1}{2} \left( v^+ + v^- \right), \quad
  \aver{\bm{q}} := \frac{1}{2} \left( \bm{q}^+ + \bm{q}^- \right),
  \quad \text{on } e, 
\end{displaymath}
and
\begin{displaymath}
  \begin{aligned}
    & \jump{v} := v^+\un^+ + v^- \un^-, \quad \jump{ \bm{q} \otimes \un
    } := \bm{q}^+ \otimes \un^+ + \bm{q}^- \otimes \un^-, \quad
    \text{on } e, \\
    & \jump{\bm{q} \times \un} := \bm{q}^+ \times \un^+ + \bm{q}^-
    \times \un^-, \quad \text{on } e, \\
  \end{aligned}
\end{displaymath}
where $\otimes$ denotes the tensor product between two vectors. For $e
\in \MEh^b$, the definitions for both trace operators are modified as 
\begin{displaymath}
  \aver{v} := v|_e, \quad \aver{\bm{q}} := \bm{q} |_e, \quad \jump{v}
  := v|_e \un, \quad \jump{\bm{q} \times \un} := \bm{q}|_e \times \un,
  \quad \jump{\bm{q} \otimes \un} := \bm{q}|_e \otimes \un, \quad
  \text{on } e, 
\end{displaymath}
where $\un$ denotes the outward unit normal.

Let us note that throughout this paper the capital $C$ or $C$ with a
subscript are generic constants which may vary from line to line but
are independent of the mesh size $h$. In addition, we will follow the
standard notations and definitions for these Sobolev spaces $L^2(D)$,
$L^2(D)^d$, $L^2(D)^{d \times d}$, $H^r(D)$, $H^r(D)^d$, $H^r(D)^{d
\times d}$ with a bounded domain $D \subset \mb{R}^d$ and a
non-negative integer $r$.  We would also use their associated inner
products and norms. We further define the Sobolev space of
irrotational vector fields by 
\begin{displaymath}
  \bmr{I}^r(D) := \left\{ \bm{v} \in H^r(D)^d \ | \  \nabla \times
  \bm{v} = 0 \text{ in } D \right\}.
\end{displaymath}

For the partition $\MTh$, we would follow the standard definitions for
broken Sobolev spaces $L^2(\MTh)$, $L^2(\MTh)^d$, $L^2(\MTh)^{d \times
d}$, $H^r(\MTh)$, $H^r(\MTh)^d$, $H^r(\MTh)^{d \times d}$ and their
corresponding broken norms and semi-norms \cite{arnold2002unified}. 

For the bounded domain $D \subset \mb{R}^d$ and for an integer $k \geq
0$, we let $\bmr{S}(D)^k$ denote the space of irrotational polynomials
of degree less than $k$, 
\begin{displaymath}
  \bmr{S}(D)^k := \left\{ \bm{v} \in \mb{P}_k(D)^d \ |\  \nabla \times
  \bm{v} = 0 \text{  in } D \right\}, 
\end{displaymath}
where $\mb{P}_k(\cdot)$ denotes the polynomial space of degree less
than $k$.  Then we give the basic approximation properties of the
space $\bmr{S}(D)^k$. 
\begin{lemma}
  For any $\bm{q} \in \bmr{I}^{k + 1}(K)$ and an element $K$, there
  exists a polynomial $\wt{\bm{q}} \in \bmr{S}(K)^k$ such that 
  \begin{displaymath}
    \|\bm{q} - \wt{\bm{q}}\|_{H^t(K)} \leq C h_K^{k + 1 - t} \| \bm{q}
    \|_{H^{t+1}(K)}, \quad 0 \leq t \leq k + 1. \\
  \end{displaymath}
  \label{le:interperror}
\end{lemma}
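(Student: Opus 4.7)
The plan is to reduce this to a standard scalar polynomial approximation estimate by exploiting the irrotationality of $\bm{q}$. Since the element $K$ is a simplex and therefore simply connected (in fact, convex), any $\bm{q} \in \bmr{I}^{k+1}(K)$ admits a scalar potential $u$, i.e.\ $\bm{q} = \nabla u$, with $u$ unique up to an additive constant and satisfying $u \in H^{k+2}(K)$. I would normalize $u$ by $\int_K u \, \mathrm{d}\bm{x} = 0$ so that the Poincar\'e inequality pins $u$ down. Note that $|u|_{H^{j+1}(K)} = |\bm{q}|_{H^j(K)}$ for every $j \geq 0$, so modulo the Poincar\'e constant we have $\|u\|_{H^{k+2}(K)} \leq C \|\bm{q}\|_{H^{k+1}(K)}$ with the correct $h_K$-scaling obtained by a standard pull-back to a reference simplex.

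Next I would apply a standard scalar polynomial approximation result (for instance the averaged Taylor/Scott--Zhang estimate, or equivalently the Bramble--Hilbert lemma on a shape-regular element) to produce a polynomial $\wt{u} \in \mb{P}_{k+1}(K)$ with
\begin{displaymath}
  \|u - \wt{u}\|_{H^{t+1}(K)} \leq C h_K^{k+1-t} |u|_{H^{k+2}(K)}, \quad 0 \leq t \leq k+1.
\end{displaymath}
Then I would simply set $\wt{\bm{q}} := \nabla \wt{u}$. By construction $\wt{\bm{q}} \in \mb{P}_k(K)^d$, and $\nabla \times \wt{\bm{q}} = \nabla \times \nabla \wt{u} = 0$, so $\wt{\bm{q}} \in \bmr{S}(K)^k$ as required.

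To close the estimate, I would observe that $\bm{q} - \wt{\bm{q}} = \nabla(u - \wt{u})$, so $\|\bm{q} - \wt{\bm{q}}\|_{H^t(K)}$ is bounded by $\|u - \wt{u}\|_{H^{t+1}(K)}$. Chaining this with the polynomial approximation bound and the potential estimate above gives
\begin{displaymath}
  \|\bm{q} - \wt{\bm{q}}\|_{H^t(K)} \leq C h_K^{k+1-t} |u|_{H^{k+2}(K)} = C h_K^{k+1-t} |\bm{q}|_{H^{k+1}(K)},
\end{displaymath}
which is the desired inequality (assuming the $H^{t+1}$ norm on the right-hand side of the statement is a typographical slip for $H^{k+1}$, the scaling otherwise being impossible at $t = 0$).

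The only non-routine obstacle is really the first step: producing the potential $u$ with a quantitative $H^{k+2}$ bound in terms of $\|\bm{q}\|_{H^{k+1}(K)}$ whose constant does not deteriorate with $h_K$. Convexity of $K$ makes the existence of $u$ elementary (integrate along rays from a centre), and the $h_K$-independent constant follows from a scaling argument to a reference simplex combined with the Poincar\'e inequality under the zero-mean normalization; the shape-regularity assumption $h_K \leq \sigma \rho_K$ is exactly what prevents the Poincar\'e constant from blowing up under the scaling. Once the potential estimate is in hand, the rest of the proof is a one-line invocation of classical Bramble--Hilbert approximation.
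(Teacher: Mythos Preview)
Your approach is correct and is the natural one. The paper does not actually give a proof of this lemma but simply refers to \cite{li2019sequential}; however, in the appendix the authors explicitly write $\bmr{S}(D)^k = \nabla V(D)^{k+1}$ with $V(D)^{k+1}$ the scalar polynomial space of degree $k+1$, so the potential-based reduction you describe---writing $\bm{q} = \nabla u$, approximating $u$ by a polynomial $\wt{u} \in \mb{P}_{k+1}(K)$ via Bramble--Hilbert, and setting $\wt{\bm{q}} = \nabla \wt{u}$---is precisely the structure the authors are working with and almost certainly what the cited reference does. Your observation that the right-hand side $\|\bm{q}\|_{H^{t+1}(K)}$ must be a misprint for $\|\bm{q}\|_{H^{k+1}(K)}$ is also correct; compare with the companion Lemma~\ref{le:L2interpolation}, which carries the $H^{k+1}$ norm.
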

\begin{proof}
  The proof of Lemma \ref{le:interperror} could be found in
  \cite{li2019sequential}.
\end{proof}

Next, we define a local $L^2$-projection $\pi_{K}^{\bmr{S}, k}$ for
any function $\bm{q} \in \bmr{I}^{k+1}(K)$ such that
$\pi_{K}^{\bmr{S}, k} \bm{q} \in \bmr{S}^k (K)$ satisfying
\begin{displaymath}
  \|\bm{q} - \pi_{K}^{\bmr{S}, k} \bm{q} \|_{L^2(K)} = \min_{\bm{r}
  \in \bmr{S}^k(K)} \|\bm{q} - \bm{r} \|_{L^2(K)}. 
\end{displaymath}
We state the following approximation property of the $L^2$-projection.
\begin{lemma}
  For element $K \in \MTh$, the following estimates hold:
  \begin{equation}
    \begin{aligned}
      \|\bm{q} - \pi_{K}^{\bmr{S}, k} \bm{q} \|_{H^t(K)} & \leq C
      h_K^{k+1 - t} \|\bm{q} \|_{H^{k+1}(K)}, \quad 0 \leq t \leq k
      + 1, \\
      \| \partial^k ( \bm{q} -  \pi_{K}^{\bmr{S}, k} \bm{q}  )
      \|_{L^2(\partial K)} & \leq C h_K^{k + 1/2 - t} \| \bm{q}
      \|_{H^{k+1}(K)}, \quad  0 \leq t \leq k, \\
    \end{aligned}
    \label{eq:L2interpolation}
  \end{equation}
  for any $\bm{q} \in \bmr{I}^{k+1}(K)$.
  \label{le:L2interpolation}
\end{lemma}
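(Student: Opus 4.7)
The plan is to combine the $L^2$-minimality of the projection $\pi_K^{\bmr{S},k}$ with the optimal polynomial approximation supplied by Lemma~\ref{le:interperror}, and then to bootstrap to higher-order $H^t$ norms via a polynomial inverse inequality on the finite-dimensional space $\bmr{S}(K)^k$. The boundary bound will follow by applying a standard scaled trace inequality to the interior estimate once it is in hand.

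First I would dispose of the case $t=0$: letting $\wt{\bm{q}} \in \bmr{S}(K)^k$ be the polynomial produced by Lemma~\ref{le:interperror}, the minimisation definition of $\pi_K^{\bmr{S},k}$ gives
\begin{displaymath}
\|\bm{q} - \pi_K^{\bmr{S},k}\bm{q}\|_{L^2(K)} \leq \|\bm{q} - \wt{\bm{q}}\|_{L^2(K)} \leq C h_K^{k+1}\|\bm{q}\|_{H^{k+1}(K)}.
\end{displaymath}
For $1 \leq t \leq k+1$ I would insert $\pm\,\wt{\bm{q}}$ and split,
\begin{displaymath}
\|\bm{q} - \pi_K^{\bmr{S},k}\bm{q}\|_{H^t(K)} \leq \|\bm{q} - \wt{\bm{q}}\|_{H^t(K)} + \|\wt{\bm{q}} - \pi_K^{\bmr{S},k}\bm{q}\|_{H^t(K)}.
\end{displaymath}
The first summand is handled directly by Lemma~\ref{le:interperror}. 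For the second I would exploit $\wt{\bm{q}} - \pi_K^{\bmr{S},k}\bm{q} \in \bmr{S}(K)^k \subset \mb{P}_k(K)^d$ and apply a polynomial inverse inequality to trade the $H^t$ norm for $h_K^{-t}$ times the $L^2$ norm, reduce to the $t=0$ bound already proven (after a further triangle inequality with $\bm{q}$), and collect powers of $h_K$.

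For the boundary estimate I would apply the scaled trace inequality
\begin{displaymath}
\|v\|_{L^2(\partial K)} \leq C\bigl(h_K^{-1/2}\|v\|_{L^2(K)} + h_K^{1/2}\|\nabla v\|_{L^2(K)}\bigr)
\end{displaymath}
to the $t$-th order partial derivatives of $\bm{q} - \pi_K^{\bmr{S},k}\bm{q}$ and then invoke the interior bound with the pairs of exponents $t$ and $t+1$; the two contributions combine to give the advertised factor $h_K^{k+1/2-t}$.

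The only point requiring a little care is that the inverse and trace constants must depend solely on the shape-regularity constant $\sigma$ and the degree $k$, not on $h_K$. Since $\bmr{S}(K)^k$ sits inside $\mb{P}_k(K)^d$, any such constant valid on the larger space transfers verbatim via a standard affine pullback to a reference element, so this is not a genuine obstacle. I do not anticipate any truly hard step: the lemma is essentially the routine projection-theoretic consequence of Lemma~\ref{le:interperror} combined with polynomial inverse/trace machinery.
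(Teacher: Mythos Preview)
Your proposal is correct and follows the standard route for deriving approximation properties of an $L^2$-type projection: $L^2$-minimality combined with Lemma~\ref{le:interperror} for $t=0$, a triangle-inequality/inverse-inequality bootstrap for $t\geq 1$, and the scaled trace inequality for the boundary term. The paper itself does not supply an argument here; it simply refers the reader to \cite{li2019nondivergence}, so there is no in-paper proof to compare against, and your argument is precisely the expected one.
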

\begin{proof}
  The proof of Lemma \ref{le:L2interpolation} could be found in
  \cite{li2019nondivergence}.
\end{proof}

For a given partition $\MTh$, we could define the piecewise
irrotational polynomial space $\bmr{S}_h^k$ as 
\begin{displaymath}
  \bmr{S}_h^k := \left\{\bm{v} \in L^2(\Omega)^d \ | \  \bm{v}|_K \in
  \bmr{S}^k(K), \  \forall K \in \MTh \right\},
\end{displaymath}
or equally the space $\bmr{S}_h^k$ can be compactly written as
$\bmr{S}_h^k = \Pi_{K \in \MTh} \bmr{S}^k(K)$. Ultimately, we
introduce a method for constructing the bases of the irrotational
polynomial space $\bmr{S}^k(\cdot)$ in Appendix \ref{sec:appendix}.

% vim:spell:tw=70:fo+=Mn:cc=70
\section{Reconstructed Approximation Space}
\label{sec:reconstructedspace}
In this section, for the given partition $\MTh$, we define a
reconstruction operator from the piecewise constant space
$\bmr{U}_h^0$ into the piecewise irrotational polynomial space
$\bmr{S}_h^k$. Precisely, $\bmr{U}_h^0$ is given by
\begin{displaymath}
  \bmr{U}_h^0 := \left\{ \bm{v} \in L^2(\Omega)^d \ |\ \bm{v}|_K \in
  \mb{P}_0(K)^d, \ \forall K \in \MTh \right\}.
\end{displaymath}
The reconstruction procedure mainly includes two parts. First, for
every element $K \in \MTh$, a point located inside the element $K$ is
specified as its corresponding collocation point $\bm{x}_K$. The
choice of $\bm{x}_K$ is flexible and we can particularly assign
$\bm{x}_K$ as the barycenter of the element $K$. The next step is to
aggregate an element patch $S(K)$ for each element $K$ in partition.
The element patch $S(K)$ is a collection of elements and consists of
the element $K$ and some surrounding elements. Here $S(K)$ is
constructed with a recursive strategy. For element $K$, we start the
recursion from appointing a threshold $\# S(K)$ which is used to
control the size of the set $S(K)$, and $S(K)$ is constructed by
agglomerating the neighbours and recursively going from there.
Specifically speaking, we construct a sequence of element sets
$S_0(K)$, $S_1(K)$, $S_2(K)$, $\cdots$, where $S_0(K)$ is just
$\left\{ K \right\}$ and $S_t(K) (t \geq 1)$ is defined in a recursive
manner: 
\begin{displaymath}
  S_t(K) = \left\{ \widetilde{K} \in \MTh \ |\ \text{there exists
  $\widehat{K} \in S_{t - 1}(K)$ such that } \widetilde{K} \cap
  \widehat{K} = e \in \MEh \right\}, \quad t = 1, 2, \cdots
\end{displaymath}
%\begin{displaymath}
  %S_t(K) = \bigcup_{\scriptsize
  %\begin{aligned}
    %\widetilde{K} \in \MTh ,\ & \widehat{K} \in S_{t - 1}(K) \\
    %\widetilde{K} \cap \widehat{K} &= e \in \mathcal E_h
  %\end{aligned}
  %} \widetilde{K}, \quad t = 1, 2, \cdots 
%\end{displaymath}
The recursive procedure ends if the depth $t$ satisfies that the set
$S_t(K)$ has collected as least $\# S(K)$ elements. After that, we
sort the distances between the collocation points of elements
belonging to $S_t(K)$ and the collocation point $\bm{x}_{K}$. We select
the $\# S(K)$ smallest values and gather the corresponding elements
to form the element patch $S(K)$. The cardinality of $S(K)$ directly
equals to $\# S(K)$. In numerical experiments, we use the same $\#
S(K)$ for all elements and here we present the steps in Algorithm
\ref{alg:patch} to show the details of construction of element
patches. 
\begin{algorithm}[htb]
  \caption{Constructing Element Patch}
  \label{alg:patch}
  \begin{algorithmic}[1]
    \renewcommand{\algorithmicrequire}{\textbf{Input:}}
    \REQUIRE
    partition $\MTh$ and a uniform threshold $\# S(K)$; \\
    \renewcommand{\algorithmicrequire}{\textbf{Output:}}
    \REQUIRE
    the element patches of all elements; \\
    \FOR{every $K \in \MTh$}
    \STATE{initialize $t=0$, $S_t(K) = \left\{ K \right\}$}
    \WHILE{the cardinality of $S_t(K)$ $<$ $\# S(K)$}
    \STATE{set $S_{t+1}(K) = S_t(K)$}
    \FOR{every $\wt{K} \in S_t(K)$}
    \STATE{add all adjacent face-neighbouring elements of $\wt{K}$ to
    $S_{t+1}(K)$}
    \ENDFOR
    \STATE{let $t = t+1$}
    \ENDWHILE
    \STATE{collect collocation points of all elements in $S_t(K)$ in
    $\mc{I}(K)$;}
    \STATE{sort the distances between points in $\mc{I}(K)$ and
    $\bm{x}_K$;}
    \STATE{select the $\# S(K)$ smallest values and collect the
    corresponding elements to form $S(K)$;}
    \ENDFOR
  \end{algorithmic}
\end{algorithm}

Moreover, for element $K \in \MTh$, we denote by $\mc{I}(K)$ the point
set formed of all collocation points with respect to the elements in
$S(K)$, 
\begin{displaymath}
  \mc{I}(K) := \left\{ \bm{x}_{\wt{K}}\ |\ \forall \wt{K} \in
  S(K) \right\}.
\end{displaymath}
Given a piecewise constant function $\bm{g} \in \bmr{U}_h^0$, for
every element $K \in \MTh$ we seek an irrotational polynomial
$\mc{R}_K \bm{g}$ of degree $m \geq 1$ defined on the patch $S(K)$ by
solving the following discrete least squares problem:
\begin{equation} 
  \begin{aligned} 
    \mc{R}_K \bm{g}  = &\mathop{\arg \min}_{\bm{p} \in
    \bmr{S}^m( S(K))} \sum_{ \bm{x} \in \mc{I}(K) } \left| \bm{p}(
    \bm{x})  - \bm{g}(\bm{x}) \right|^2 \\
    & \text{s.t. } \bm{p}(\bm{x}_K) = \bm{g}(\bm{x}_K). \\
  \end{aligned}
  \label{eq:lsp}
\end{equation}
The existence and uniqueness of the solution to \eqref{eq:lsp}
entirely depends on the geometrical positions of the points in
$\mc{I}(K)$. We thus make the following assumption
\cite{li2016discontinuous} to guarantee the well-posedness of the
problem \eqref{eq:lsp}.
\begin{assumption}
  For any element $K \in \MTh$ and $\bm{p} \in \bmr{S}^m(S(K))$, one
  has that
  \begin{displaymath}
    \bm{p}|_{\mc{I}(K)} = \bm{0} \quad \text{implies} \quad
    \bm{p}|_{S(K)} \equiv \bm{0}.
  \end{displaymath}
\end{assumption}
This assumption implies the number $\# S(K)$ shall be greater than
$\text{dim}(\bmr{S}^m(\cdot))$ and further rules out the case that all
the points in $\mc{I}(K)$ lie on an algebraic curve of degree $m$.

It should be noted that the solution to \eqref{eq:lsp} is linearly
dependent on $\bm{g}$, which inspires us to define a linear
reconstruction operator $\mc{R}$ for the function in $\bmr{U}_h^0$ in
an element-wise manner:
\begin{displaymath}
  (\mc{R} \bm{g})|_K = (\mc{R}_K \bm{g})|_K, \quad \text{on any } K
  \in \MTh. 
\end{displaymath}
Hence, the operator $\mc{R}$ maps the piecewise constant space
$\bmr{U}_h^0$ onto a subspace of $\bmr{S}_h^m$ and we denote by
$\bmr{U}_h^m = \mc{R} \bmr{U}_h^0$ the image of the operator. The
reconstructed space $\bmr{U}_h^m$ is actually the finite element space
we would adopt in next section.  For any function $\bm{q} \in
\bmr{I}^{m+1}(\Omega)$, we could define a function $\wt{\bm{q}} \in
\bmr{U}_h^0$ such that 
\begin{displaymath}
  \wt{\bm{q}}(\bm{x}_K) = \bm{q}(\bm{x}_K), \quad \forall K \in \MTh,
\end{displaymath}
which allows us to extend the operator $\mc{R}$ to act on the space
$\bmr{I}^{m+1}(\Omega)$ by directly letting $\mc{R}\bm{q} = \mc{R}
\wt{\bm{q}}$. Therefore, with the reconstruction operator $\mc{R}$,
the irrotational vector fields $\bm{q} \in \bmr{I}^{m+1}(\Omega)$ is
mapped to a piecewise irrotational polynomial in $\bmr{U}_h^m$. 

Further we outline a group of basis functions of the space
$\bmr{U}_h^m$ to present more details. We define a group of
characteristic functions $\left\{ \bm{w}_K^i \right\}$(for all $K$, $1
\leq i \leq d$) such that $\bm{w}_K^i(\bm{x}) \in \bmr{U}_h^0$ and 
\begin{displaymath}
  \bm{w}_K^i(\bm{x}) = \begin{cases}
    \bm{e}_i, &\text{in } K, \\
    \bm{0}, &\text{in other elements}, \\
  \end{cases}
\end{displaymath}
where $\bm{e}_i$ is a $d \times 1$ unit vector whose $i$-th entry is
1. Then we define $\bm{\lambda}_K^i = \mc{R} \bm{w}_K^i$ and we state
the following lemma to ensure the functions $ \left\{ \bm{\lambda}_K^i
\right\}$ is a group of basis functions of the space $\bmr{U}_h^m$. 
\begin{lemma}
  For any element $K$ and $1 \leq i \leq d$, the functions $ \{
  \bm{\lambda}_K^i \}$ are linearly independent and then $\bmr{U}_h^m$
  is spanned by $ \left\{\bm{\lambda}_K^i \right\}$. 
  \begin{proof}
    Since $\bm{\lambda}_K^i = \mc{R} \bm{w}_K^i$ and the constraint in
    \eqref{eq:lsp} guarantees that at the collocation points, $\bm{
    \lambda}_K^i$ satisfies
    \begin{equation}
      \bm{\lambda}_K^i(\bm{x}_{\wt{K}}) =
      \begin{cases} 
        \bm{e}_i, & \wt{K} = K, \\
        \bm{0}, & \wt{K} \neq K. \\
      \end{cases}
      \label{eq:lambdaorth}
    \end{equation}
    Then we consider a group of constants $ \left\{ a_{K, i} \right\}$
    such that 
    \begin{displaymath}
      \sum_{j = 1}^d \sum_{\widehat{K} \in \MTh} a_{\widehat{K}, j}
      \bm{\lambda}_{\widehat{K}}^j(\bm{x}) = 0, \quad \forall \bm{x}
      \in \Omega.
    \end{displaymath}
    For any element $\wt{K}$, we let $\bm{x} = \bm{x}_{\wt{K}}$ and by
    \eqref{eq:lambdaorth} we conclude that 
    \begin{displaymath}
      \sum_{j = 1}^d a_{\wt{K}, j}
      \bm{\lambda}_{\wt{K}}^j(\bm{x}_{\wt{K}})  =  \sum_{j = 1}^d
      a_{\wt{K}, j} \bm{e}_j = 0,
    \end{displaymath}
    which directly indicates $a_{\wt{K}, i} = 0$ for $1 \leq i \leq
    d$. Hence we obtain the coefficient $a_{K, i} = 0$ for any element
    $K$ and $i$, which gives the linear independence of $ \left\{
    \bm{\lambda}_K^i \right\}$. The linear operator $\mc{R}$ maps
    $\bmr{U}_h^0$ onto $\bmr{U}_h^m$ and the property of linear
    operator leads to $\text{dim}(\bmr{U}_h^m) \leq
    \text{dim}(\bmr{U}_h^0)$. Note that the number of $\left\{
    \bm{\lambda}_K^i \right\}$ is actually $ \text{dim}(\bmr{U}_h^0)$,
    thus we have $\bmr{U}_h^m = \text{span} \left\{ \bm{\lambda}_K^i
    \right\}$, which completes the proof.
  \end{proof}
  \label{le:basisfunctions}
\end{lemma}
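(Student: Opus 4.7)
The plan is to exploit the interpolatory constraint in the least squares problem \eqref{eq:lsp} to show that each reconstructed basis function $\bm{\lambda}_K^i = \mc{R}\bm{w}_K^i$ behaves like a discrete delta at the collocation points, and then use this property together with a dimension count to conclude both linear independence and spanning.

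First I would verify the interpolation identity
\begin{displaymath}
  \bm{\lambda}_K^i(\bm{x}_{\widetilde{K}}) = \begin{cases} \bm{e}_i, & \widetilde{K} = K, \\ \bm{0}, & \widetilde{K} \neq K. \end{cases}
\end{displaymath}
This is immediate from the hard constraint $\bm{p}(\bm{x}_K) = \bm{g}(\bm{x}_K)$ in \eqref{eq:lsp}: when $\bm{g} = \bm{w}_K^i$, the minimizer on every patch $S(\widetilde{K})$ containing $\widetilde{K}$ is forced to interpolate $\bm{w}_K^i$ at the collocation point $\bm{x}_{\widetilde{K}}$, which equals $\bm{e}_i$ if $\widetilde{K}=K$ and $\bm{0}$ otherwise.

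Next I would establish linear independence. Assume a relation $\sum_{j,\widehat{K}} a_{\widehat{K},j} \bm{\lambda}_{\widehat{K}}^j \equiv 0$ on $\Omega$. Evaluating at an arbitrary collocation point $\bm{x}_{\widetilde{K}}$ collapses the double sum, by the delta property, to $\sum_j a_{\widetilde{K},j} \bm{e}_j = 0$. The linear independence of the canonical basis $\{\bm{e}_j\}$ then forces $a_{\widetilde{K},j} = 0$ for every $\widetilde{K}$ and $j$.

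Finally, for the spanning statement, I would invoke a dimension count. Since $\bmr{U}_h^m = \mc{R}\bmr{U}_h^0$ is the image of a linear operator, $\dim(\bmr{U}_h^m) \leq \dim(\bmr{U}_h^0) = d\cdot\#\MTh$. The family $\{\bm{\lambda}_K^i\}_{K\in\MTh,\,1\leq i\leq d}$ has exactly $d\cdot\#\MTh$ members, all lying in $\bmr{U}_h^m$ and now known to be linearly independent, so equality of dimensions is forced and these functions span $\bmr{U}_h^m$. The only mildly delicate point is the cardinality argument, since one must remember that the constraint in \eqref{eq:lsp} is what makes $\mc{R}$ injective on $\bmr{U}_h^0$ (and thus forces the upper bound on $\dim\bmr{U}_h^m$ to be attained); beyond this, the proof is essentially bookkeeping from the interpolation identity.
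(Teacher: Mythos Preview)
Your proof is correct and follows essentially the same route as the paper: establish the delta property at collocation points from the interpolation constraint in \eqref{eq:lsp}, evaluate a vanishing linear combination at each $\bm{x}_{\widetilde{K}}$ to force all coefficients to zero, and finish with the dimension count $\dim(\bmr{U}_h^m)\leq\dim(\bmr{U}_h^0)=d\cdot\#\MTh$. Your closing remark about injectivity of $\mc{R}$ is a harmless aside (it is in fact a consequence of the linear independence you just proved, not an additional ingredient needed for the spanning argument).
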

Together with the basis functions, we can write the reconstruction
operator $\mc{R}$ explicitly: for any function $\bm{g}(\bm{x}) =
(g^1(\bm{x}), \ldots, g^d(\bm{x})) \in \bmr{I}^{m+1}(\Omega)$ or
$\bm{g}(\bm{x}) \in \bmr{U}_h^0$, we have that 
\begin{equation}
  \mc{R} \bm{g} =  \sum_{K \in \MTh} \sum_{i=1}^d g^i(\bm{x}_K)
  \bm{\lambda}_K^i(\bm{x}).
  \label{eq:writeR}
\end{equation}
In Appendix \ref{sec:appendix}, we give more details of the computer
implementation of the reconstructed space. 

Then we focus on the approximation properties of the operator
$\mc{R}$.  We define a constant $\Lambda(m, S(K))$ for element $K$ as 
\begin{displaymath}
  \Lambda(m, S(K)) := \max_{p \in \mb{P}_m(S(K))}
  \frac{\max_{\bm{x} \in S(K)} |p(\bm{x}) |  }{ \max_{\bm{x} \in
  \mc{I}(K) } |p( \bm{x})| }.
\end{displaymath}
Let $\Lambda_m := \max_{K \in \MTh} \left( 1 + \Lambda(m, S(K))
\sqrt{\# S(K)} \right)$, and under some mild and
practical conditions on element patches, $\Lambda_m$ admits a uniform
upper bound. We refer to \cite{li2012efficient, li2016discontinuous}
for these conditions and more discussion about the upper bound of 
$\Lambda_m$. Combining the approximation property of the irrotational
polynomial space \eqref{le:L2interpolation}, we state the following
approximation estimates of the reconstruction operator $\mc{R}$. 
\begin{theorem}
  For element $K$, the following estimates hold,
  \begin{equation}
    \begin{aligned}
      \| \bm{g} - \mc{R} \bm{g} \|_{H^q(K)} & \leq C \Lambda_m h_K^{m
      + 1 - q} \| \bm{g} \|_{H^{m+1}( S(K))}, \quad q = 0, 1, \\
      \| D^q (\bm{g} - \mc{R} \bm{g}) \|_{L^2(\partial K)} & \leq C
      \Lambda_m h_K^{m + 1 - q - 1/2} \| \bm{g} \|_{H^{m+1}(S(K))},
      \quad q = 0, 1, \\
    \end{aligned}
    \label{eq:apperror}
  \end{equation}
  for any $\bm{g} \in \bmr{I}^{m+1}(\Omega)$.
  \label{th:apperror}
\end{theorem}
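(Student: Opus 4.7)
The plan is to compare $\mc{R}\bm{g}$ with a competitor polynomial $\bm{q}^* \in \bmr{S}^m(S(K))$ that already approximates $\bm{g}$ well on the patch and that is admissible in the constrained least squares problem \eqref{eq:lsp}, namely $\bm{q}^*(\bm{x}_K) = \bm{g}(\bm{x}_K)$. A natural choice is the gradient of an averaged Taylor polynomial of a scalar potential of $\bm{g}$ centered at $\bm{x}_K$, which is automatically irrotational and matches $\bm{g}$ at $\bm{x}_K$; alternatively one can use the local projection from Lemma \ref{le:L2interpolation} with a low-order correction to enforce the collocation constraint. The patch $S(K)$ has diameter comparable to $h_K$ by the shape regularity of $\MTh$ together with the construction in Algorithm \ref{alg:patch}, so a standard Bramble-Hilbert argument on $S(K)$ yields
\[
\| \bm{g} - \bm{q}^* \|_{H^q(S(K))} \leq C h_K^{m+1-q} \| \bm{g} \|_{H^{m+1}(S(K))}, \quad 0 \leq q \leq m+1,
\]
together with a pointwise bound $\| \bm{g} - \bm{q}^* \|_{L^\infty(S(K))} \leq C h_K^{m+1-d/2} \| \bm{g} \|_{H^{m+1}(S(K))}$ after a Sobolev embedding on the rescaled patch.

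I would then split $\bm{g} - \mc{R}\bm{g} = (\bm{g} - \bm{q}^*) + (\bm{q}^* - \mc{R}\bm{g})$ and reduce everything to the polynomial residual $\bm{e} := \bm{q}^* - \mc{R}\bm{g} \in \bmr{S}^m(S(K))$, which is controlled via the discrete-to-continuous norm equivalence encoded by $\Lambda(m, S(K))$. At the collocation points, the optimality of $\mc{R}\bm{g}$ against the admissible competitor $\bm{q}^*$ gives
\[
\sum_{\bm{x} \in \mc{I}(K)} |\bm{g}(\bm{x}) - \mc{R}\bm{g}(\bm{x})|^2 \leq \sum_{\bm{x} \in \mc{I}(K)} |\bm{g}(\bm{x}) - \bm{q}^*(\bm{x})|^2 \leq \#S(K)\, \| \bm{g} - \bm{q}^* \|_{L^\infty(S(K))}^2,
\]
so that a triangle inequality at each $\bm{x} \in \mc{I}(K)$ combined with the definition of $\Lambda(m, S(K))$ applied componentwise to $\bm{e}$ yields $\| \bm{e} \|_{L^\infty(S(K))} \leq C \Lambda_m \| \bm{g} - \bm{q}^* \|_{L^\infty(S(K))}$. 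Scaling from $L^\infty(K)$ to $L^2(K)$ then produces the $q = 0$ volume estimate in \eqref{eq:apperror}, and an inverse inequality applied to the polynomial $\bm{e}$ on $K$, together with the $H^1$ part of the Bramble-Hilbert bound on $\bm{g} - \bm{q}^*$, delivers the $q = 1$ volume estimate. The boundary bounds on $\partial K$ follow by the same splitting, using the trace inequality on $K$ for the smooth part $\bm{g} - \bm{q}^*$ and a discrete inverse trace inequality for the polynomial part $\bm{e}$, each contributing the characteristic loss of $h_K^{-1/2}$.

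The main technical obstacle will be constructing the competitor $\bm{q}^* \in \bmr{S}^m(S(K))$ that is simultaneously irrotational on the whole patch, enforces the pointwise collocation constraint $\bm{q}^*(\bm{x}_K) = \bm{g}(\bm{x}_K)$, and attains the Bramble-Hilbert rate without inflating constants. The irrotationality is handled by working with the gradient of a Taylor or averaged Taylor polynomial of a scalar potential of $\bm{g}$, which makes the collocation constraint automatic once the expansion is centered at $\bm{x}_K$. Careful bookkeeping of the $h_K$-scaling across the patch $S(K)$ and of the uniform bound on $\Lambda_m$, in the spirit of \cite{li2016discontinuous, li2019nondivergence}, then closes the argument.
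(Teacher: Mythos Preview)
Your proposal is correct and follows the standard route for this kind of result. Note that the paper itself does not supply a proof of Theorem~\ref{th:apperror}: it simply refers the reader to \cite{li2019sequential}. The argument you sketch---construct an admissible competitor $\bm{q}^*\in\bmr{S}^m(S(K))$ with $\bm{q}^*(\bm{x}_K)=\bm{g}(\bm{x}_K)$ and good Bramble--Hilbert approximation on the patch, invoke optimality of the constrained least squares problem \eqref{eq:lsp} at the collocation points, pass from the discrete $\ell^\infty$ norm on $\mc{I}(K)$ to the continuous $L^\infty$ norm on $S(K)$ via the Lebesgue-type constant $\Lambda(m,S(K))$, and finish with inverse and trace inequalities on the polynomial part---is precisely the mechanism used in \cite{li2016discontinuous, li2019sequential}. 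Your observation that the irrotationality and the collocation constraint are handled simultaneously by taking $\bm{q}^*$ to be the gradient of a Taylor polynomial (centered at $\bm{x}_K$) of a scalar potential of $\bm{g}$ is the right device; since $\Omega$ is convex and $\bm{g}\in\bmr{I}^{m+1}(\Omega)$, such a potential exists, and the componentwise application of $\Lambda(m,S(K))$ is legitimate because each component of $\bm{e}=\bm{q}^*-\mc{R}\bm{g}$ lies in $\mb{P}_m(S(K))$.
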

\begin{proof}
  The proof of Theorem \ref{th:apperror} can be found in
  \cite{li2019sequential}. 
\end{proof}

\section{Numerical Scheme for Monge-Amp\`ere Equation}
\label{sec:scheme}
In this section, we present our numerical algorithm for approximating
solutions to the fully nonlinear elliptic Monge-Amp\`ere equation
which is given by 
\begin{equation}
  \begin{aligned}
    \det( D^2 u ) &= f, \quad \text{in } \Omega,  \\
    u &= g, \quad \text{on } \partial \Omega,
  \end{aligned}
  \label{eq:mae}
\end{equation}
where $D^2 u$ denotes the Hessian matrix of the unknown $u$ and $f$ is
a strictly positive function on $\Omega$. In this paper, we focus on
developing a numerical scheme for classical solutions to the
Monge-Amp\`ere equation. Particularly, the data function $f$ and $g$
are assumed to be sufficiently smooth to guarantee that there is a
strictly convex solution $u \in H^s(\Omega)$ with $s > 3$ to the
problem \eqref{eq:mae}. For the smoothness of $u$, we refer to
\cite{Caffarelli1984dirichlet, Courant1989methods,
Caffarelli1991regularity, Caffarelli1997properties} for more
regularity results. 

To solve the fully nonlinear problem \eqref{eq:mae}, we propose a new
numerical scheme which is based on the first-order system of
\eqref{eq:mae} and the Newton-type iteration. Our method rewrites the
problem \eqref{eq:mae} into an equivalent first-order system to define
a sequence of linear problems that can be solved by a discontinuous
least squares finite element method. The first-order system is defined
with introducing an auxiliary gradient variable $\bm{p} = \nabla u$,
which reads
\begin{equation}
  \begin{aligned}
    \det( \nabla \bm{p}) &= f, \quad \text{in } \Omega, \\
    \nabla u &= \bm{p}, \quad \text{in } \Omega, \\
    u &= g, \quad \text{on } \partial \Omega. \\
  \end{aligned}
  \label{eq:fosmae}
\end{equation}
We are motivated by the idea presented in \cite{li2019sequential,
li2019nondivergence}, which provides an idea of decoupling the
problem \eqref{eq:mae} into two sequential steps. The first step is to
solve a nonlinear first-order system, which reads
\begin{equation}
  \begin{aligned}
    \det( \nabla \bm{p}) &= f, &&\text{in } \Omega, \\
    \bm{p} \times \un &= \nabla g \times \un, &&\text{on } \partial
    \Omega. \\
  \end{aligned}
  \label{eq:firstfos}
\end{equation}
This system includes the first equation in \eqref{eq:fosmae} and the
boundary condition of $u$ provides the tangential trace $\bm{p} \times
\un$ on the boundary. The other two equations in \eqref{eq:fosmae}
form the second linear first-order system, which reads
\begin{equation}
  \begin{aligned}
    \nabla u &= \bm{p}, \quad \text{in } \Omega, \\
    u &= g, \quad \text{on } \partial \Omega. \\
  \end{aligned}
  \label{eq:secondfos}
\end{equation}
We first solve the nonlinear problem \eqref{eq:firstfos} using the
piecewise irrotational space $\bmr{U}_h^m$ to obtain a numerical
approximation $\bm{p}_h$ to the gradient $\bm{p}$. Then together with
$\bm{p}_h$ we solve the second linear problem \eqref{eq:secondfos} to
get the numerical solution to $u$. The numerical scheme of the
nonlinear problem \eqref{eq:firstfos} is the main component of our
whole algorithm. Now we begin by focusing on the linearization to the
problem \eqref{eq:firstfos}. 
\begin{lemma}
  For any piecewise smooth functions $\bm{v}, \bm{w} \in H^1(\MTh)^d$,
  there holds:
  \begin{equation}
    \begin{aligned}
      \mathrm{det}(\nabla_h (\bm{v} + \bm{w})) = \det(\nabla_h \bm{v})
      + \det(\nabla_h \bm{w}) + \cof(\nabla_h \bm{v}) : \nabla_h
      \bm{w}, \quad d=2,  \\
      \mathrm{det}(\nabla_h(\bm{v} + \bm{w}))  = \det(\nabla_h \bm{v})
      + \det(\nabla_h \bm{w}) + \cof(\nabla_h \bm{v}) : \nabla_h
      \bm{w} + \cof(\nabla_h \bm{w}) : \nabla_h \bm{v}, \quad d=3,\\
    \end{aligned}
    \label{eq:detAB}
  \end{equation}
  where $\cof(\cdot)$ denotes the cofactor matrix. 
  \label{le:detAB}
\end{lemma}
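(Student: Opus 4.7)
The plan is to reduce the identity to a purely algebraic one between $d \times d$ matrices. The equalities in \eqref{eq:detAB} are pointwise on each element $K \in \MTh$ and involve no derivatives of the Jacobians themselves; fixing $\bm{x} \in K$ and setting $A = (\nabla \bm{v})(\bm{x})$, $B = (\nabla \bm{w})(\bm{x})$, it suffices to establish that $\det(A+B)$ equals the asserted right-hand side for arbitrary matrices $A, B \in \mb{R}^{d \times d}$. The piecewise smoothness of $\bm{v}$ and $\bm{w}$ enters only to guarantee that the Jacobians are defined almost everywhere on each $K$, after which the identity propagates elementwise.

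First I would invoke multilinearity of $\det$ in its rows. Writing $A+B$ as the matrix with $i$-th row $A_i + B_i$, multilinearity yields $\det(A+B) = \sum_{s \in \{0,1\}^d} \det(M_s)$, where $M_s$ is the matrix whose $i$-th row is $A_i$ if $s_i = 0$ and $B_i$ if $s_i = 1$. I would then group these $2^d$ terms by $k = |s|$, the number of rows drawn from $B$. The blocks $k = 0$ and $k = d$ contribute $\det A$ and $\det B$, respectively. For $k = 1$, Laplace expansion along the single replaced row of each summand produces $\sum_i \sum_j B_{ij}(\cof A)_{ij} = \cof(A):B$, recovering the Frobenius contraction in \eqref{eq:detAB}. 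In dimension $d = 2$ these three blocks exhaust the expansion, which proves the 2D identity. In dimension $d = 3$ the remaining $k = 2$ block, by the symmetric argument with the roles of $A$ and $B$ interchanged (two $B$-rows, one $A$-row), evaluates to $\cof(B):A$, which yields the 3D identity.

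I do not foresee any genuine obstacle; the argument is bookkeeping of a multilinear expansion. The only point that merits care is the identification of the mixed-row blocks with the cofactor contractions, which is an immediate consequence of the Laplace expansion and the definition of the cofactor matrix. An alternative route that avoids the combinatorial enumeration entirely is to view $t \mapsto \det(A+tB)$ as a polynomial in $t$ of degree $d$, use $\partial_{A_{ij}} \det(A) = (\cof A)_{ij}$ to compute its Taylor coefficients at $t=0$, and evaluate at $t=1$; the absence of a cubic term in 2D and the presence of both linear and quadratic terms in 3D then reproduce the two cases in \eqref{eq:detAB} without any enumeration.
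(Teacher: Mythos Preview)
Your argument is correct. The paper does not actually present a proof of this lemma; it simply cites \cite{Brenner2012finite, Gerard2015standard} and moves on. Your self-contained route---pointwise reduction to an algebraic identity for $d\times d$ matrices, followed by row-multilinearity of $\det$, grouping the $2^d$ terms by the number $k$ of rows taken from $B$, and identifying the $k=1$ (resp.\ $k=d-1$) block with $\cof(A):B$ (resp.\ $\cof(B):A$) via Laplace expansion---is the natural direct proof and requires nothing beyond elementary linear algebra. The alternative Taylor route you sketch (viewing $t\mapsto\det(A+tB)$ as a degree-$d$ polynomial and using Jacobi's formula $\partial_{A_{ij}}\det A = (\cof A)_{ij}$) is equally valid and perhaps slightly slicker for the $d=3$ case. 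Either way, there is no gap: the identity is purely algebraic, and the regularity hypothesis $\bm{v},\bm{w}\in H^1(\MTh)^d$ is only there so that the Jacobians exist a.e.\ on each element.
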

\begin{proof}
  We refer to \cite{Brenner2012finite, Gerard2015standard} for the
  proof.
\end{proof}

%\begin{lemma}
  %For any piecewise smooth functions $v, w \in H^2(\MTh)$, the
  %following inequality holds:
  %\begin{equation}
    %\det(D_h^2(v + w)) = \det(D_h^2 v) + \det(D_h^2 w) + \cof(D_h^2 w)
    %: D_h^2 v,
    %\label{eq:detAB}
  %\end{equation}
  %where $\cof( \cdot)$ denotes the cofactor matrix.
  %\label{le:detAB}
%\end{lemma}

\begin{lemma}
  For any piecewise smooth functions $\bm{v}, \bm{w} \in H^1(\MTh)^d$,
  there holds
  \begin{equation}
    \lim_{t \rightarrow 0} \frac{\det(\nabla_h ( \bm{v} + t \bm{w})) -
    \det(\nabla_h \bm{v})  }{t} = \cof(\nabla_h \bm{v}) : \nabla_h
    \bm{w}.
    \label{eq:linearied}
  \end{equation}
  \label{le:linearied}
\end{lemma}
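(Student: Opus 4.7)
The plan is to derive \eqref{eq:linearied} as an immediate consequence of the algebraic expansion already recorded in Lemma \ref{le:detAB}. The key observation is that $\cof(\cdot)$ is homogeneous of degree $d-1$ in its matrix argument, so substituting $t\bm{w}$ in place of $\bm{w}$ in \eqref{eq:detAB} produces only terms of order $t$ (which gives the claimed directional derivative) and higher-order terms in $t$ that vanish in the limit.

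Concretely, I would first apply Lemma \ref{le:detAB} with the second argument replaced by $t\bm{w}$. In the case $d=2$, using $\det(\nabla_h(t\bm{w})) = t^2 \det(\nabla_h \bm{w})$ and the linearity of the Frobenius product, this yields
\begin{displaymath}
  \det(\nabla_h(\bm{v} + t\bm{w})) - \det(\nabla_h \bm{v}) = t\,\cof(\nabla_h \bm{v}) : \nabla_h \bm{w} + t^2 \det(\nabla_h \bm{w}).
\end{displaymath}
Dividing by $t$ and letting $t \to 0$ proves the claim. In the case $d=3$, the additional term $\cof(\nabla_h(t\bm{w})) : \nabla_h \bm{v}$ appears; since $\cof$ is quadratic in its argument in three dimensions, this term equals $t^2\,\cof(\nabla_h \bm{w}) : \nabla_h \bm{v}$, and the expansion becomes
\begin{displaymath}
  \det(\nabla_h(\bm{v} + t\bm{w})) - \det(\nabla_h \bm{v}) = t\,\cof(\nabla_h \bm{v}) : \nabla_h \bm{w} + t^2\,\cof(\nabla_h \bm{w}) : \nabla_h \bm{v} + t^3 \det(\nabla_h \bm{w}).
\end{displaymath}
Again, dividing by $t$ and passing to the limit leaves exactly $\cof(\nabla_h \bm{v}) : \nabla_h \bm{w}$.

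There is essentially no analytic obstacle here: because the identities in Lemma \ref{le:detAB} are pointwise algebraic on each element $K \in \MTh$, the limit can be taken pointwise, and the $H^1(\MTh)^d$ regularity ensures that $\nabla_h \bm{v}$, $\nabla_h \bm{w}$, $\cof(\nabla_h \bm{v})$ and $\det(\nabla_h \bm{w})$ are all well-defined elementwise in $L^1$. The only point that deserves a brief remark is the homogeneity of $\cof$, which justifies pulling out the factor $t^{d-1}$ from $\cof(\nabla_h(t\bm{w}))$ in the three-dimensional identity.
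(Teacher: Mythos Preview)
Your proposal is correct and follows essentially the same approach as the paper: apply Lemma~\ref{le:detAB} with $t\bm{w}$ in place of $\bm{w}$, use the homogeneity of $\det$ and $\cof$ to extract powers of $t$, divide by $t$, and let $t\to 0$. The paper's proof is slightly terser but identical in substance.
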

\begin{proof}
  For $d=2$, by Lemma \ref{le:detAB} we observe that 
  \begin{displaymath}
    \begin{aligned}
      \frac{\det(\nabla_h ( \bm{v} + t \bm{w})) - \det(\nabla_h
      \bm{v})}{t} &= \cof(\nabla_h \bm{v}) : \nabla_h \bm{w} + t
      \det(\nabla_h \bm{w}),
    \end{aligned}
  \end{displaymath}
  and for $d=3$, we have 
  \begin{displaymath}
    \begin{aligned}
      \frac{\det(\nabla_h ( \bm{v} + t \bm{w})) - \det(\nabla_h
      \bm{v})}{t} &=\cof(\nabla_h \bm{v}) : \nabla_h \bm{w} + t \left(
      \cof(\nabla_h \bm{w}) : \nabla_h \bm{v} \right) + t^2 \det(
      \nabla_h \bm{w}).
    \end{aligned}
  \end{displaymath}
  We obtain the equation \eqref{eq:linearied} by letting $t
  \rightarrow 0$, which completes the proof.
\end{proof}

On the continuous level, by Lemma \ref{le:linearied}, the
linearization of the first-order system \eqref{eq:firstfos} at the
function $\bm{p}$ is given by 
\begin{displaymath}
  \lim_{t \rightarrow 0} \frac{\det(\nabla \bm{p} + t \nabla \bm{q}) -
  \det(\nabla \bm{p}) }{t} = \cof(\nabla \bm{p}) : \nabla \bm{q}.
\end{displaymath}

Then the basic idea of the Newton iteration is that with a given
approximation $\wt{\bm{p}}$ we seek the next numerical approximation
by solving the problem 
\begin{equation}
  \cof( \nabla \wt{\bm{p}}) : \delta \bm{p} = f - \det( \nabla \wt{
  \bm{p}}),
  \label{eq:newton1}
\end{equation}
and then update with $\widehat{\bm{p}} = \wt{\bm{p}} + \delta \bm{p}$.
It is noticeable that this linearization is just formulated on the
continuous level. We shall consider the Newton iteration on the
discrete level. With a numerical approximation $\bm{p}_h^n$, the
problem \eqref{eq:newton1} shall be formally adapted as to find
$\bm{p}_h^{n+1}$ such that
\begin{equation}
  \cof( \nabla_h \bm{p}_h^n ) : (\bm{p}_h^{n+1} - \bm{p}_h^n) = f -
  \det( \nabla_h \bm{p}_h^n). 
  \label{eq:newton2}
\end{equation}
The reconstructed approximation space $\bmr{U}_h^m$ we introduced in
the previous section involves discontinuity across the interelement
\cite{li2019sequential}. In this finite element setting, the problem
\eqref{eq:newton2} results in a non-divergence elliptic problem with
discontinuous coefficients $\cof(\nabla_h \bm{p}_h^n)$ and such a
problem also does not naturally fit within the standard Galerkin
framework.  Instead, we propose a discontinuous least squares
variational problem that allows discontinuous approximation space and
piecewise discontinuous coefficients. The least squares functional
$J_h^{\bmr{p}}(\cdot; \cdot)$ is defined as 
\begin{equation}
  \begin{aligned}  
    J_h^{\bmr{p}}(\bm{q}; \bm{w}) := \sum_{K \in \MTh} \|\cof( \bm{w})
    : \nabla \bm{q} - & \left( f - \det( \bm{w}) + \cof( \bm{w}) :
    \nabla \bm{w} \right)\|_{L^2(K)}^2 \\ 
    + & \sum_{e \in \MEh^i} \frac{\eta }{h_e} \| \jump{ \bm{ \bm{q}
    \otimes \un} } \|_{L^2(e)}^2 + \sum_{e \in \MEh^b} \frac{\eta
    }{h_e} \| \bm{q} \times \un - \nabla g \times \un \|_{L^2(e)}^2,
    \\
  \end{aligned}
  \label{eq:lsfp}
\end{equation}
where $\eta$ is the positive penalty parameter. The first term in
\eqref{eq:lsfp} corresponds to the problem \eqref{eq:newton2} when
$\bm{w}$ just takes $\bm{w} = \nabla_h \bm{p}_h^n$.
%The second term and the third term are used to weakly impose the
%continuity condition and boundary condition, respectively. 
The second term is used to weakly impose the continuity condition
since our goal is to approximate the classical smooth solution to the
Monge-Amp\`ere equation. The last term in \eqref{eq:lsfp} is to weakly
impose the boundary condition in \eqref{eq:firstfos}.  Given the
approximation $\bm{p}_h^n$ at step $n$, we minimize the functional
$J_h^{\bmr{p}}(\cdot; \nabla_h \bm{p}_h^n)$ over the reconstructed
space $\bmr{U}_h^m$ to obtain the  next level approximation
$\bm{p}_h^{n + 1}$: 
\begin{equation}
  \bm{p}_h^{n + 1} = \mathop{\arg \min}_{ \bm{q}_h \in \bmr{U}_h^m}
  J_h^{\bmr{p}}( \bm{q}_h; \nabla_h \bm{p}_h^n).
  \label{eq:minpn1}
\end{equation}
We write its corresponding Euler-Lagrange equation to solve the
minimization problem. The problem \eqref{eq:minpn1} is equivalent to
the variational equation which takes the form: find $\bm{p}_h^{n+1}
\in \bmr{U}_h^m$ such that 
\begin{equation}
  a_h^{\bmr{p}}(\bm{p}_h^{n+1}, \bm{q}_h; \nabla_h \bm{p}_h^n) =
  l_h^{\bmr{p}} (\bm{q}_h; \nabla_h \bm{p}_h^n), \quad \forall
  \bm{q}_h \in \bmr{U}_h^m,
  \label{eq:ELequation}
\end{equation}
where the bilinear form $a_h^{\bmr{p}}(\cdot, \cdot; \cdot)$ is 
\begin{displaymath}
  \begin{aligned}
    a_h^{\bmr{p}}(\bm{p}, \bm{q}; \bm{w}) = \sum_{K \in \MTh} \int_K
    (\cof(\bm{w}) : \bm{p}) ( \cof(\bm{w}) : \bm{q}) \d{x} & +
    \sum_{e \in \MEh^i} \int_e \frac{\eta }{h_e} \jump{\bm{p} \otimes
    \un } :  \jump{\bm{q} \otimes \un} \d{s} \\ 
    & + \sum_{e \in \MEh^b} \int_e \frac{\eta}{h_e} (\bm{p} \times
    \un) (\bm{q} \times \un) \d{s},\\
  \end{aligned}
\end{displaymath}
and the linear form $l_h^{\bmr{p}}(\cdot; \cdot)$ is 
\begin{displaymath}
  \begin{aligned}
    l_h^{\bmr{p}}(\bm{q}; \bm{w}) = \sum_{K \in \MTh} \int_K (\cof(
    \bm{w}) : \nabla \bm{q})\left( f - \det( \bm{w})  + \cof(\bm{w}) :
    \nabla \bm{w} \right) \d{x} + \sum_{e \in \MEh^b} \int_e
    \frac{\eta}{h_e} (\bm{q} \times \un) (\nabla g \times \un) \d{s}.
  \end{aligned}
\end{displaymath}
Finally, we present the algorithm for computing the numerical solution
$\bm{p}_h$ in approximation to the gradient $\bm{p}$ in Algorithm
\ref{alg:iteration}. In Algorithm \ref{alg:iteration}, the stop
criterion can be taken as 
\begin{displaymath}
  \frac{\enorm{ \bm{p}_h^n - \bm{p}_h^{n-1}}}{ \enorm{ \bm{p}_h^{n -
  1 }}} < \varepsilon, 
\end{displaymath}
where the norm $\enorm{ \cdot}$ could be $L^2$ norm $\| \cdot
\|_{L^2(\Omega)}$ or the discrete $l^2$ vector norm $\| \cdot
\|_{l^2}$ which acts on the corresponding finite element solution. 

\begin{algorithm}[htb]
  \caption{Newton Iteration with Least Squares Method} 
  \label{alg:iteration}
  \begin{algorithmic}[1]
    \renewcommand{\algorithmicrequire}{\textbf{Input:}}
    \REQUIRE The initial value $\bm{p}_h^0$; \\
    \renewcommand{\algorithmicrequire}{\textbf{Output:}}
    \REQUIRE The numerical solution to the gradient $\bm{p}_h$; \\
    \STATE{initialize $n=0$;} 
    \WHILE{not satisfy the stop criterion}
    \STATE{compute the coefficient $\cof(\nabla_h \bm{p}_h^n)$;}
    \STATE{solve the minimization problem \eqref{eq:minu} to obtain
    $\bm{p}_h^{n+1}$;}
    \STATE{$n = n + 1$}
    \ENDWHILE
    \STATE{$\bm{p}_h = \bm{p}_h^n$;}
  \end{algorithmic}
\end{algorithm}

We define an energy norm $\pnorm{\cdot}$ for any vector-valued
function $\bm{q} \in H^1(\MTh)^d$ by
\begin{equation}
  \pnorm{ \bm{q}}^2 := \sum_{K \in \MTh} \| \nabla \bm{q}
  \|_{L^2(K)}^2 + \sum_{e \in \MEh^i} \frac{1}{h_e} \| \jump{ \bm{q}
  \otimes \un} \|_{L^2(e)}^2 + \sum_{e \in \MEh^b} \frac{1}{h_e} \|
  \bm{q} \times \un \|_{L^2(e)}^2.
  \label{eq:pnorm}
\end{equation}
The norm $\pnorm{\cdot}$ is proven to be stronger than the broken
Sobolev norm $\| \cdot \|_{H^1(\MTh)}$ by the following lemma. 
\begin{lemma}
  The following inequality holds, 
  \begin{displaymath}
    \| \bm{q} \|_{H^1(\MTh)} \leq C \pnorm{\bm{q}},
  \end{displaymath}
  for any $\bm{q} \in H^1(\MTh)^d$.
  \label{le:pisnorm}
\end{lemma}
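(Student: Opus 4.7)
The target inequality decomposes as
\begin{displaymath}
\| \bm{q} \|_{H^1(\MTh)}^2 = \| \bm{q} \|_{L^2(\Omega)}^2 + \| \nabla_h \bm{q} \|_{L^2(\MTh)}^2,
\end{displaymath}
and the second term is directly majorized by $\pnorm{\bm{q}}^2$ from the definition of $\pnorm{\cdot}$ in \eqref{eq:pnorm}. The entire work therefore reduces to establishing a Poincaré--Friedrichs-type estimate $\|\bm{q}\|_{L^2(\Omega)} \leq C\pnorm{\bm{q}}$.

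The plan is to reduce to the continuous case via a standard conforming reconstruction. First, I would introduce an Oswald-type averaging operator $\mc{A}_h$ that maps $H^1(\MTh)^d$ into $H^1(\Omega)^d$ by averaging the traces of $\bm{q}$ at shared nodes across element boundaries (as in Karakashian--Pascal or Burman--Ern). Setting $\bm{q}^c = \mc{A}_h \bm{q}$, the well-known estimates for this operator yield
\begin{displaymath}
\|\bm{q} - \bm{q}^c\|_{L^2(\Omega)}^2 + h^2 \|\nabla_h(\bm{q}-\bm{q}^c)\|_{L^2(\MTh)}^2 \leq C \sum_{e \in \MEh^i} h_e \|\jump{\bm{q}\otimes\un}\|_{L^2(e)}^2 \leq C h^2 \pnorm{\bm{q}}^2,
\end{displaymath}
from which $\|\nabla \bm{q}^c\|_{L^2(\Omega)} \leq C\pnorm{\bm{q}}$ follows by the triangle inequality, and a trace estimate applied face by face gives $\|(\bm{q}-\bm{q}^c)\times\un\|_{L^2(\partial\Omega)} \leq C\pnorm{\bm{q}}$.

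Second, I would prove the continuous inequality
\begin{displaymath}
\|\bm{v}\|_{L^2(\Omega)} \leq C \bigl( \|\nabla \bm{v}\|_{L^2(\Omega)} + \|\bm{v}\times\un\|_{L^2(\partial\Omega)} \bigr) \qquad \forall \bm{v} \in H^1(\Omega)^d.
\end{displaymath}
Decompose $\bm{v} = \bar{\bm{v}} + \bm{r}$ with $\bar{\bm{v}}$ the mean of $\bm{v}$ over $\Omega$. Standard Poincaré gives $\|\bm{r}\|_{L^2(\Omega)} \leq C\|\nabla\bm{v}\|_{L^2(\Omega)}$ and the trace inequality then yields $\|\bm{r}\times\un\|_{L^2(\partial\Omega)} \leq C\|\nabla\bm{v}\|_{L^2(\Omega)}$, so the triangle inequality reduces matters to controlling the constant vector $\bar{\bm{v}}$ by $\|\bar{\bm{v}}\times\un\|_{L^2(\partial\Omega)}$. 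For this I would verify that the quadratic form $\bm{c} \mapsto \int_{\partial\Omega}|\bm{c}\times\un|^2\,ds$ is positive definite on $\mathbb{R}^d$: if it vanishes then $\bm{c}$ is parallel to $\un$ on almost every face, and since a convex polygon/polyhedron has at least $d+1$ faces with normals spanning $\mathbb{R}^d$, this forces $\bm{c}=0$; the positive-definiteness constant then depends only on $\Omega$.

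Finally, I would apply step two to $\bm{v}=\bm{q}^c$ and combine with step one:
\begin{displaymath}
\|\bm{q}\|_{L^2(\Omega)} \leq \|\bm{q}^c\|_{L^2(\Omega)} + \|\bm{q}-\bm{q}^c\|_{L^2(\Omega)} \leq C\bigl(\|\nabla \bm{q}^c\|_{L^2(\Omega)} + \|\bm{q}^c\times\un\|_{L^2(\partial\Omega)}\bigr) + C\pnorm{\bm{q}} \leq C\pnorm{\bm{q}}.
\end{displaymath}
The main obstacle I anticipate is the spectral argument in step two: verifying carefully that the geometry of a convex polygonal (polyhedral) $\Omega$ forces the form $\int_{\partial\Omega}|\bm{c}\times\un|^2\,ds$ to dominate $|\bm{c}|^2$ uniformly. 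The Oswald-type bound in step one is standard but must be quoted from the DG literature rather than re-derived here.
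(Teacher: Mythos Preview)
The paper does not actually prove this lemma; it simply cites external references and moves on. Your outline therefore supplies far more than the paper does, and the overall strategy --- reduce to a broken Poincar\'e--Friedrichs inequality, split into a conforming part plus a jump-controlled remainder, and invoke a continuous Friedrichs inequality with tangential boundary control --- is the standard route for such estimates and is essentially correct.

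There is one technical point you should tighten. The Oswald / Karakashian--Pascal averaging operator is defined on piecewise \emph{polynomial} spaces by averaging nodal values; for a general $\bm{q} \in H^1(\MTh)^d$ in dimension $d \geq 2$, point values are not defined and the construction as you state it does not apply. The usual fix is to precede the nodal averaging by an elementwise polynomial projection (local $L^2$ or Cl\'ement type); the additional error this introduces is of order $\sum_{K} h_K^2 |\bm{q}|_{H^1(K)}^2 \leq C h^2 \pnorm{\bm{q}}^2$ and is absorbed harmlessly into your bound. Alternatively, Brenner's broken Poincar\'e--Friedrichs framework is formulated directly for piecewise $H^1$ functions and bypasses the issue. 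Either route closes the gap; your step two --- positive-definiteness of the quadratic form $\bm{c}\mapsto\|\bm{c}\times\un\|_{L^2(\partial\Omega)}^2$ on $\mb{R}^d$, using that the outward normals of a bounded convex polytope span $\mb{R}^d$ --- is correct as written.
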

\begin{proof}
  The proof can be found in \cite{li2019sequential, Bensow2005div}.
\end{proof}
With respect to the energy norm $\pnorm{\cdot}$, we may expect the
numerical solution $\bm{p}_h$ to the gradient has the finite element
estimate which reads
\begin{equation} 
  \pnorm{\bm{p} - \bm{p}_h} \leq C h^m \|\bm{p}\|_{H^{m+1}(\Omega)}, 
  \label{eq:perror}
\end{equation} 
where we assume the problem \eqref{eq:firstfos} has the exact solution
$\bm{p} \in \bmr{I}^{m+1}(\Omega)$. In Section
\ref{sec:numericalresult}, this estimate is confirmed by a series of
numerical results. 

We note that the problem at each nonlinear iteration step could be
regarded as solving the non-divergence form elliptic problem which has
the form $\cof(\nabla_h \bm{p}_h^n) : \nabla \bm{p} = \bm{f}$. 
The coefficient $\cof(\nabla_h \bm{p}_h^n)$ is
symmetric since the piecewise irrotational property of the
approximation space. If the discontinuous coefficient satisfies
SPD condition in two dimensions and satisfies Cord\`e condition
\cite{Smears2013discontinuous} in three dimensions, the error estimate
of the least squares method of \eqref{eq:lsfp} and \eqref{eq:minpn1}
under the energy norm $\pnorm{\cdot}$ has been established in detail
in \cite{li2019nondivergence}. Hence, we may expect the numerical
solution to the nonlinear system \eqref{eq:firstfos} satisfies the
error estimate in \cite{li2019nondivergence}, which actually is the
estimate \eqref{eq:perror}.  Moreover, in the iteration setting the
coefficient comes from the finite element solution and we obviously
only have symmetric piecewise polynomial coefficient. This fact seems
like a notable difference between nonlinear problem and the standard
linear non-divergence form elliptic problem and leads to hard
theoretical verification for the convergence. In Section
\ref{sec:numericalresult}, the numerical results demonstrate that the
Newton iteration together with least squares framework retains fast
convergence speed and keeps optimal finite element convergence rate
with respect to the error measurement $\pnorm{\cdot}$ and we also
demonstrate the piecewise convexity of the numerical solution.

%Finally, we
%present the algorithm for computing the numerical approximation
%$\bm{p}_h$ in Algorithm \ref{alg:iteration}. 
%In Algorithm
%\ref{alg:iteration}, the stop criterion can be taken as
%\begin{displaymath}
  %\frac{\enorm{ \bm{p}_h^n - \bm{p}_h^{n-1}}}{ \enorm{ \bm{p}_h^{n -
  %1 }}} < \varepsilon, 
%\end{displaymath}
%where the norm $\enorm{ \cdot}$ could be $L^2$ norm $\| \cdot
%\|_{L^2(\Omega)}$ or the discrete $l^2$ vector norm $\| \cdot
%\|_{l^2}$ which acts on the corresponding finite element solution. 

\begin{remark}
  The minimization problem \eqref{eq:minpn1} for the nonlinear
  iteration only requires a piecewise irrotational approximation
  space and here we adopt the reconstructed space $\bmr{U}_h^m$ to
  seek a numerical approximation to the nonlinear system
  \eqref{eq:firstfos}.  We note that the standard piecewise
  irrotational space $\bmr{S}_h^m$ can also be used for solving if we
  substitute the space $\bmr{U}_h^m$ by $\bmr{S}_h^m$ in
  \eqref{eq:minpn1} and \eqref{eq:ELequation}. In Section
  \ref{sec:comparison}, we make a comparison between $\bmr{U}_h^m$ and
  $\bmr{S}_h^m$ to show the great efficiency and robustness of the
  proposed method which may be due to the use of the space
  $\bmr{U}_h^m$.
  \label{re:space}
\end{remark}

After solving the nonlinear problem \eqref{eq:firstfos}, we then move
on to the second first-order system \eqref{eq:secondfos}. With a given
numerical approximation $\bm{p}_h$, we propose another least squares
functional $J_h^{\bmr{u}}(\cdot; \cdot)$ to seek a numerical solution
for $u$, where $J_h^{\bmr{u}}(\cdot; \cdot)$ is defined by 
\begin{equation}
  J_h^{\bmr{u}}(v; \bm{q}) := \sum_{K \in \MTh} \| \nabla u - \bm{q}
  \|_{L^2(K)}^2 + \sum_{e \in \MEh^b} \frac{1}{h} \|u - g
  \|_{L^2(e)}^2.
  \label{eq:lsfu}
\end{equation}
We adopt the standard $C^0$ finite element space $\wt{V}_h^m$ of
degree $m$ to minimize the functional $J_h^{\bmr{u}}(\cdot; \cdot)$ to
obtain a numerical approximation $u_h \in \wt{V}_h^m$.  Precisely, the
minimization problem reads 
\begin{equation}
  u_h = \mathop{\arg \min}_{v_h \in \wt{V}_h^m} J_h^{\bmr{u}}(v_h;
  \bm{p}_h),
  \label{eq:minu}
\end{equation}
and its corresponding variational problem reads: find $u_h \in
\wt{V}_h^m$ such that 
\begin{displaymath}
  a_h^{\bmr{u}}(u_h, v_h) = l_h^{\bmr{u}}(v_h; \bm{p}_h), \quad
  \forall v_h \in \wt{V}_h^m,
\end{displaymath}
where the bilinear form $a_h^{\bmr{u}}(\cdot, \cdot)$ is 
\begin{displaymath}
  a_h^{\bmr{u}}(u, v) = \sum_{K \in \MTh} \int_K \nabla u \cdot
  \nabla v \d{x} + \sum_{e \in \MEh^b} \int_e \frac{1}{h} uv
  \d{s},
\end{displaymath}
and the linear form $l_h^{\bmr{u}}(\cdot; \cdot)$ is 
\begin{displaymath}
  l_h^{\bmr{u}}(v; \bm{q}) = \sum_{K \in \MTh} \int_K \nabla v \cdot
  \bm{q} \d{x} + \sum_{e \in \MEh^b} \int_e \frac{1}{h} v g \d{s}.
\end{displaymath}

We introduce an energy norm $\unorm{\cdot}$ that is naturally induced
from the bilinear form $a_h^{\bmr{u}}(\cdot, \cdot)$, 
\begin{displaymath}
  \unorm{v}^2 := \sum_{K \in \MTh} \| \nabla v \|_{L^2(K)}^2 + \sum_{e
  \in \MEh^b} \frac{1}{h} \| v\|_{L^2(e)}^2, \quad \forall v \in
  H^1(\Omega).
\end{displaymath}
It is trivial to check that $\unorm{\cdot}$ is a norm on
$H^1(\Omega)$. The error estimates under $L^2$ norm $\| \cdot
\|_{L^2(\Omega)}$ and energy norm $\unorm{\cdot}$ have been proven in
\cite{li2019nondivergence, li2019sequential} and here we present the
main results. 
\begin{theorem}
  Let $u \in H^{m+1}(\Omega)$ be the exact solution to \eqref{eq:mae}
  and let $u_h \in \wt{V}_h^m$ be the numerical solution to
  \eqref{eq:minu}, then the following estimates hold:
  \begin{equation} 
    \begin{aligned} 
      \unorm{u - u_h} & \leq C \left( h^m \| u \|_{H^{m+1}(\Omega)} +
      \| \bm{p} - \bm{p}_h \|_{L^2(\Omega)} \right), \\ 
      \|u - u_h \|_{L^2(\Omega)} & \leq C \left( h^{m + 1} \| u
      \|_{H^{m+1}(\Omega)} + \| \bm{p} - \bm{p}_h \|_{L^2(\Omega)}
      \right), 
      \label{eq:uerror} 
    \end{aligned}
  \end{equation}
  where $\bm{p}_h$ is the given numerical approximation to the
  gradient. 
  \label{th:uerror}
\end{theorem}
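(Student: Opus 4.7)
The plan is to adapt the standard Galerkin analysis to this least squares setting in three steps: Galerkin orthogonality from consistency, the energy estimate via an interpolant decomposition, and the $L^2$ estimate via an Aubin--Nitsche duality argument. First, I verify that the exact solution $u$, since $\nabla u = \bm{p}$ in $\Omega$ and $u = g$ on $\partial \Omega$, satisfies
\begin{displaymath}
  a_h^{\bmr{u}}(u, v_h) = \sum_{K \in \MTh} \int_K \bm{p} \cdot \nabla v_h \d{x} + \sum_{e \in \MEh^b} \int_e \frac{1}{h} g\, v_h \d{s} = l_h^{\bmr{u}}(v_h; \bm{p})
\end{displaymath}
for every $v_h \in \wt{V}_h^m$. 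Subtracting the discrete equation yields the error equation
\begin{displaymath}
  a_h^{\bmr{u}}(u - u_h, v_h) = \sum_{K \in \MTh} \int_K (\bm{p} - \bm{p}_h) \cdot \nabla v_h \d{x}, \quad \forall v_h \in \wt{V}_h^m.
\end{displaymath}

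For the energy estimate, I take $u^I \in \wt{V}_h^m$ to be the standard Lagrange interpolant of $u$. Classical interpolation bounds together with the trace estimate $\|u - u^I\|_{L^2(e)}^2 \le C h_K^{2m+1}\|u\|_{H^{m+1}(K)}^2$ yield $\unorm{u - u^I} \le C h^m \|u\|_{H^{m+1}(\Omega)}$. Setting $e_h := u^I - u_h \in \wt{V}_h^m$ and testing with $e_h$, the identity $\unorm{e_h}^2 = a_h^{\bmr{u}}(u^I - u, e_h) + a_h^{\bmr{u}}(u - u_h, e_h)$ combined with the error equation and Cauchy--Schwarz gives $\unorm{e_h} \le \unorm{u^I - u} + \|\bm{p} - \bm{p}_h\|_{L^2(\Omega)}$. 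The triangle inequality then completes the first estimate in \eqref{eq:uerror}.

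For the $L^2$ estimate, I use duality. Let $\phi$ solve $-\Delta \phi = u - u_h$ in $\Omega$, $\phi = 0$ on $\partial \Omega$; on a convex polygonal (polyhedral) domain, elliptic regularity provides $\|\phi\|_{H^2(\Omega)} \le C \|u - u_h\|_{L^2(\Omega)}$. Integration by parts element-by-element, together with the continuity of $u$, $u_h$, and $\nabla\phi$ across interior faces, gives
\begin{displaymath}
  \|u - u_h\|_{L^2}^2 = \sum_{K \in \MTh} \int_K \nabla(u - u_h) \cdot \nabla \phi \d{x} - \int_{\partial \Omega}(u - u_h) \partial_n \phi \d{s}.
\end{displaymath}
Introducing a Lagrange interpolant $\phi^I \in \wt{V}_h^m$ (which satisfies $\phi^I = 0$ on $\partial \Omega$ because $\phi$ does), splitting $\nabla \phi = \nabla(\phi - \phi^I) + \nabla \phi^I$, and invoking the error equation with $v_h = \phi^I$ eliminates the contribution of $\nabla(u - u_h) \cdot \nabla\phi^I$ in favour of $\int_K (\bm{p} - \bm{p}_h)\cdot\nabla \phi^I$. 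The first two resulting terms are estimated by $C h \unorm{u - u_h}\|\phi\|_{H^2}$ and $C\|\bm{p} - \bm{p}_h\|_{L^2}\|\phi\|_{H^2}$ via standard interpolation and Cauchy--Schwarz.

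The hard part will be the boundary term $\int_{\partial \Omega}(u - u_h) \partial_n \phi$: a naive combination of the trace inequality $\|\partial_n\phi\|_{L^2(\partial\Omega)} \le C\|\phi\|_{H^2}$ with the energy-controlled bound $\|u - u_h\|_{L^2(\partial\Omega)} \le C h^{1/2} \unorm{u - u_h}$ would only give an $h^{m+1/2}$ rate and miss the target. To recover the sharp $h^{m+1}$, I would exploit the minimization property of $u_h$, namely $J_h^{\bmr{u}}(u_h; \bm{p}_h) \le J_h^{\bmr{u}}(u^I; \bm{p}_h)$, together with the fact that the Lagrange interpolant's boundary trace interpolates $g$ with accuracy $h^{m+1}$ in $L^2(e)$ to control $\|u_h - g\|_{L^2(\partial\Omega)}$ more tightly, then absorb $\|u - u_h\|_{L^2}$ on the left-hand side after using $\|\phi\|_{H^2} \le C\|u - u_h\|_{L^2}$. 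The detailed execution of this boundary control is precisely the technical step carried out in \cite{li2019nondivergence, li2019sequential}.
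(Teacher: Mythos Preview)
The paper does not actually prove this theorem: it simply states that the estimates ``have been proven in \cite{li2019nondivergence, li2019sequential}'' and records the result. Your outline is therefore more detailed than what appears here, and your energy-norm argument (consistency of $a_h^{\bmr{u}}$, the error equation $a_h^{\bmr{u}}(u-u_h,v_h)=\sum_K\int_K(\bm p-\bm p_h)\cdot\nabla v_h$, and the C\'ea-type split through the Lagrange interpolant) is correct and is exactly the standard route taken in those references.

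For the $L^2$ bound your duality setup is also the right one, and you correctly isolate the boundary term $\int_{\partial\Omega}(u-u_h)\,\partial_n\phi$ as the only nontrivial piece. One caution: the fix you sketch, using $J_h^{\bmr u}(u_h;\bm p_h)\le J_h^{\bmr u}(u^I;\bm p_h)$ together with an $O(h^{m+1})$ boundary interpolation estimate, by itself still yields only $\|u_h-g\|_{L^2(\partial\Omega)}\le C h^{1/2}\bigl(h^m\|u\|_{H^{m+1}}+\|\bm p-\bm p_h\|_{L^2}\bigr)$, so paired with $\|\partial_n\phi\|_{L^2(\partial\Omega)}\le C\|\phi\|_{H^2}$ you are back to $h^{m+1/2}$. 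The missing half power in the cited works is recovered not from the minimization inequality alone but from a sharper treatment of $\partial_n\phi$ on $\partial\Omega$ (exploiting $\phi|_{\partial\Omega}=0$ and an $O(h^{1/2})$ gain via a trace/interpolation argument on the dual side). Since both you and the paper ultimately defer that step to \cite{li2019nondivergence, li2019sequential}, your proposal is acceptable as a sketch, but be aware that the mechanism you name is not quite the one that closes the gap.
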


%%% Local Variables:
%%% mode: latex
%%% TeX-master: "article"
%%% End:

% vim:spell:tw=70:fo+=Mn:cc=70
\section{Numerical Results}
\label{sec:numericalresult}
In this section we present some numerical computational examples both
in two and three dimensions to demonstrate numerical performance of
the proposed method.
%For the numerical tests in two dimensions, the
%penalty parameter $\eta$ in \eqref{eq:lsfp} is taken as $20$, and the
%computational domain is chosen as $ \Omega = (0, 1)^2$. We employ the
%triangular meshes with resolutions of $h = 1/10$, $h = 1/20$, $h =
%1/40$ and $h = 1/80$, see Fig~\ref{fig:partition}. For the numerical
%examples in three dimensions, the penalty parameter is selected to be
%$\eta = 50$, and the computational domain is $\Omega = (0, 1)^3$. We
%adopt the tetrahedral meshes with mesh size $h = 1/4$, $h = 1/8$, $h =
%1/16$ and $h = 1/32$, see Fig~\ref{fig:partition}. We list the values
%of $\# S(K)$ in Tab.~\ref{tab:numSK} for both two-dimensional cases
%and three-dimensional cases.  
%%the threshold $\#
%%S(K)$ that controls the cardinality of $S(K)$ is selected uniformly
%%and we list a group of reference values in Tab.~\ref{tab:numSK}. 
%For all numerical tests, we take the pair of approximation spaces
%$\bmr{U}_h^m$ and $\wt{V}_h^m$ with the accuracy $1 \leq m \leq 3$ to
%solve $\bm{p}$ and $u$, respectively, and the stop criterion in
%Algorithm \ref{alg:iteration} is selected as
%\begin{equation} 
  %\frac{ \| \bm{p}_h^{n} - \bm{p}_h^{n - 1} \|_{l^2} }{
  %\| \bm{p}_h^{n - 1} \|_{l^2} } < 10^{-10}. 
%\end{equation}

\subsection{2D Examples} 
We first present the two-dimensional numerical examples. For all
tests, the computational domain $\Omega$ is chosen as $(0, 1)^2$ and
the penalty parameter $\eta$ in \eqref{eq:lsfp} is taken as $20$. We
employ the triangular meshes with resolutions of $h = 1/10$, $h =
1/20$, $h = 1/40$ and $h = 1/80$, see Fig~\ref{fig:partition}. In
Tab.~\ref{tab:numSK}, we list the values of $\# S(K)$ that are used in
the numerical tests.  We take the pair of approximation spaces
$\bmr{U}_h^m$ and $\wt{V}_h^m$ with the accuracy $1 \leq m \leq 3$ to
solve $\bm{p}$ and $u$, respectively. The stop criterion in Algorithm
\ref{alg:iteration} is selected as 
\begin{displaymath} 
  \frac{ \| \bm{p}_h^{n} - \bm{p}_h^{n - 1} \|_{l^2} }{
  \| \bm{p}_h^{n - 1} \|_{l^2} } < 10^{-10}. 
\end{displaymath}

\begin{figure}
  \centering
  \includegraphics[width=0.4\textwidth]{./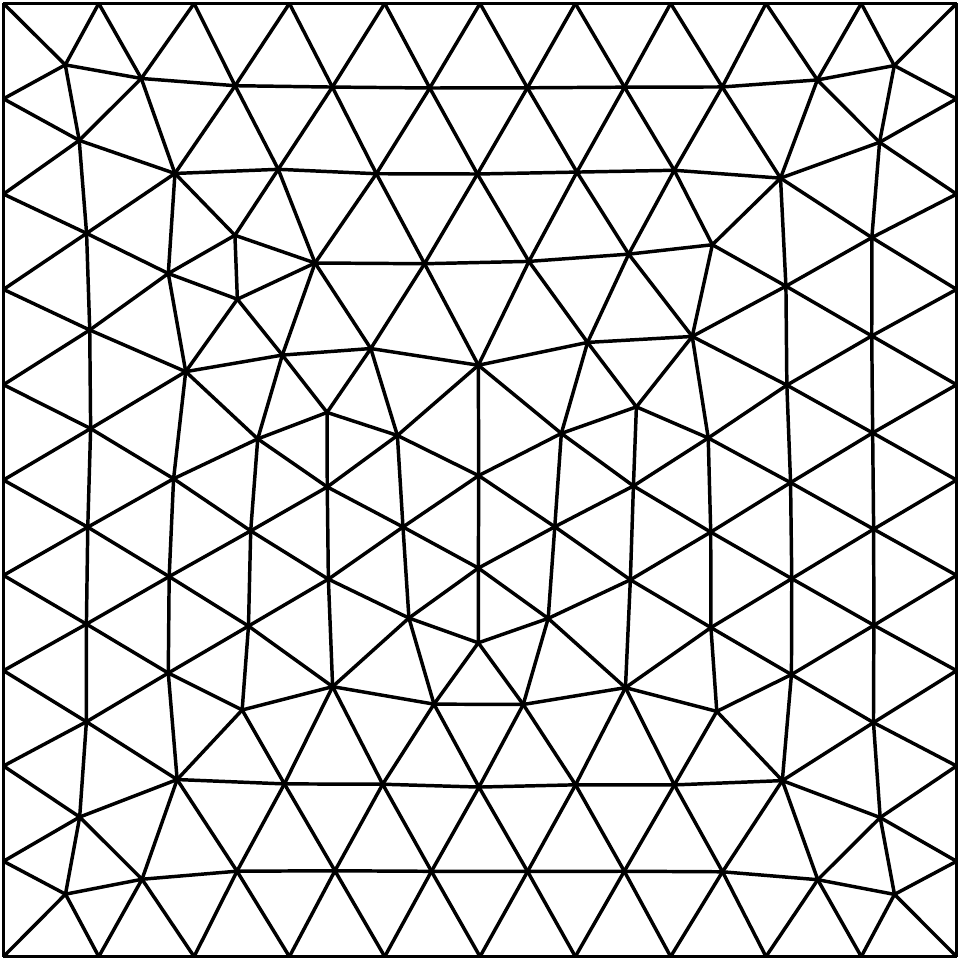}
  \hspace{25pt}
  \includegraphics[width=0.4\textwidth]{./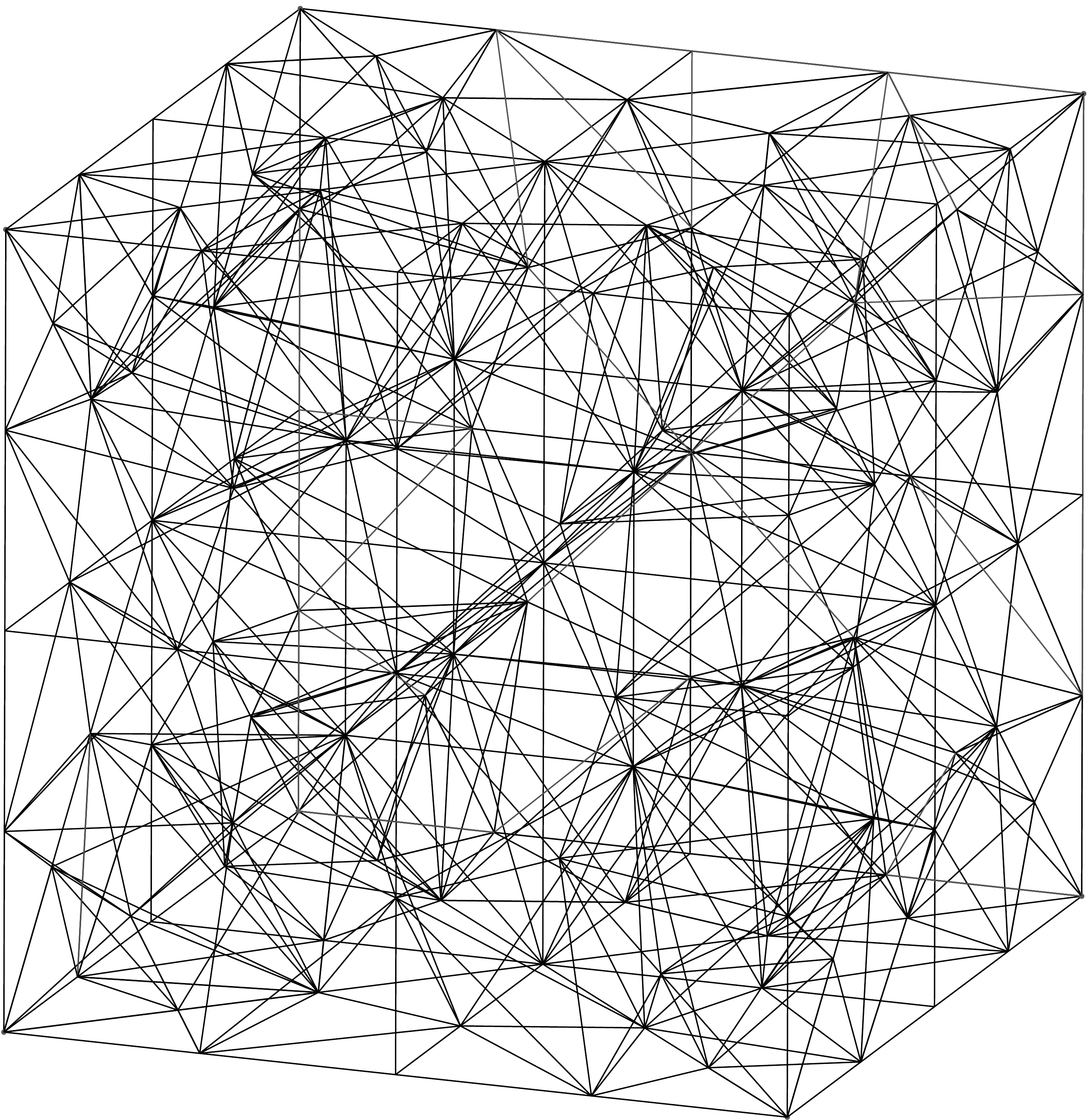}
  \caption{2d triangular partition with $h = 1/10$ (left) / 3d
  tetrahedral partition with $h = 1/4$ (right).}
  \label{fig:partition}
\end{figure}

\begin{table}
  \centering
  \renewcommand{\arraystretch}{1.3}
  \begin{tabular}{p{1.5cm} | p{1.5cm} | p{1cm} | p{1cm} | p{1cm}  }
    \hline\hline 
    & $m$ & 1 & 2 & 3 \\
    \hline
    $d=2$ & \multirow{2}{1.5cm}{$\# S(K)$} & 5 & 9 & 14 \\
    \cline{1-1}\cline{3-5}
    $d=3$ &  & 9 & 18 & 35 \\
    \hline\hline
  \end{tabular}
  \caption{The uniform $\# S(K)$ for $1 \leq m \leq 3$ in two and
  three dimensions.}
  \label{tab:numSK}
\end{table}

\paragraph{\textbf{Example 1:}} We first consider a
two-dimensional benchmark problem \cite{Benamou2010numerical} to
investigate convergence behaviour of the proposed numerical scheme. We
take the source function 
\begin{displaymath}
  f(x, y) = \left( 1 + x^2 + y^2 \right) \exp\left( x^2 + y^2 \right),
\end{displaymath}
which gives the smooth convex analytical solution 
\begin{displaymath}
  u(x, y) = \exp\left( \frac{1}{2} (x^2 + y^2) \right).
\end{displaymath}
%We solve the nonlinear problem \eqref{eq:firstfos} by the
%reconstructed space $\bmr{U}_h^m$ and solve the linear problem
%\eqref{eq:secondfos} by Lagrange finite element space $\wt{V}_h^m$,
%where both unknowns are approximated with accuracy $1 \leq m \leq 3$. 
The initial value is chosen as the solution of the following Poisson's
equation 
\begin{displaymath}
  \Delta u = 2 \sqrt{f}, \quad \text{in } \Omega, \quad u = g, \quad
  \text{on } \partial \Omega.
\end{displaymath}
This Poisson problem with the Dirichlet boundary condition is
considered to be closely related to the corresponding Monge-Amp\`ere
equation \cite{Dean2006numerical}. We thus apply its finite element
solution to start the Newton iteration.  The numerical errors for both
$\bm{p}$ and $u$ are listed in Tab.~\ref{tab:example1}. We first
observe that the convergence rates for the two unknowns under the
energy norms $\pnorm{\cdot}$ and $\unorm{\cdot}$ are optimal $O(h^m)$.
The optimal $L^2$ convergence rates are numerically detected for odd
$m$. This odd/even situation has also been observed in
\cite{li2019sequential}.  Besides, all computed convergence rates are
consistent with the estimate \eqref{eq:perror} and \eqref{eq:uerror}.
We also include the number of Newton steps in Tab.~\ref{tab:example1}.
Our method shows a very fast convergence speed and almost only 5
Newton steps are required for all $1 \leq m \leq 3$.

\begin{table}
  \renewcommand{\arraystretch}{1.3}
  \centering
  \scalebox{0.88}{
  \begin{tabular}{c|r|c|c|c|c|c|c|c|c|c}
    \hline\hline
    $m$ & $h~~~$ & $ \pnorm{\bm{p} - \bm{p}_h} $ & order & $ \| \bm{p} -
    \bm{p}_h\|_{L^2(\Omega)}$  & order & $ \unorm{u - u_h} $
    & order & $ \| u - u_h\|_{L^2(\Omega)}$ & order & \# iter
    \\
    \hline 
    \multirow{4}{0.6cm}{1} & $1/10$ & 
    1.643e-1 & -    & 7.217e-3 & - & 
    6.799e-2 & -    & 1.273e-3 & - & 6 \\
    \cline{2-11}
    & $1/20$ &
    8.078e-2 & 1.02 & 2.268e-3 & 1.67 & 
    3.416e-2 & 0.99 & 3.739e-4 & 1.77 & 6 \\
    \cline{2-11}
    & $1/40$ &
    3.911e-2 & 1.04 & 5.522e-4 & 2.03 & 
    1.709e-2 & 1.00 & 8.602e-5 & 2.11 & 6 \\
    \cline{2-11}
    & $1/80$ &
    1.952e-2 & 1.01 & 1.459e-4 & 1.95 & 
    8.553e-3 & 1.00 & 2.071e-5 & 2.06 & 5 \\
    \hline
    \multirow{4}{0.6cm}{2} & $1/10$ & 
    1.621e-2 & -    & 2.519e-3 & - & 
    2.972e-3 & -    & 5.368e-4 & - & 5 \\
    \cline{2-11}
    & $1/20$ &
    3.843e-3 & 2.07 & 6.961e-4 & 1.86 & 
    8.039e-4 & 1.89 & 1.482e-4 & 1.85 & 5 \\
    \cline{2-11}
    & $1/40$ &
    9.837e-4 & 1.98 & 1.807e-4 & 1.95 & 
    2.071e-4 & 1.96 & 3.826e-5 & 1.95 & 5 \\
    \cline{2-11}
    & $1/80$ &
    2.492e-4 & 1.99 & 4.575e-5 & 1.99 & 
    5.231e-5 & 1.99 & 9.666e-6 & 1.99 & 5 \\
    \hline
    \multirow{4}{0.6cm}{3} & $1/10$ & 
    1.416e-3 & -    & 4.783e-5 & - & 
    5.007e-5 & -    & 3.923e-6 & - & 5 \\
    \cline{2-11}
    & $1/20$ &
    1.515e-4 & 3.22 & 4.317e-6 & 3.47 & 
    5.453e-6 & 3.19 & 4.640e-7 & 3.08 & 4 \\
    \cline{2-11}
    & $1/40$ &
    1.845e-5 & 3.03 & 2.912e-7 & 3.90 & 
    5.347e-7 & 3.35 & 3.151e-8 & 3.89 & 5 \\
    \cline{2-11}
    & $1/80$ &
    2.287e-6 & 3.01 & 1.892e-8 & 3.96 & 
    6.026e-8 & 3.13 & 2.026e-9 & 3.98 & 5 \\
    \hline\hline
  \end{tabular}}
  \caption{Convergence results of the Example 1.}
  \label{tab:example1}
\end{table}

\paragraph{\textbf{Example 2:}} In this example, we solve the
problem with the same analytical solution as Example 1 but we choose a
very different initial value to demonstrate the robustness of the
proposed method. We apply the smooth function 
\begin{equation}
  u^0(x, y) = 5x^4 + 10x^2 - xy + 0.1y^2 -5x -3y,
  \label{eq:test2initial}
\end{equation}
to start the nonlinear iteration. We note that this initial guess is
convex but it is very far from the exact solution. The numerical
errors and the number of nonlinear iterations are collected in
Tab.~\ref{tab:example2}.  We see a very similar numerical result as in
Example 1. For such an initial guess, the number of Newton iterations
is still less than $15$.  Fig.~\ref{fig:ex2history} shows the
convergence history of the relative error  $ { \| \bm{p}_h^{n} -
\bm{p}_h^{n - 1} \|_{l^2} } / { \| \bm{p}_h^{n - 1} \|_{l^2} }$  on
the finest mesh $h = 1/80$ for $1 \leq m \leq 3$. The history of
errors seems to be consistent with the performance of the Newton
method, which provides a slow convergence speed when the finite
element solution is far from the exact solution.  As the numerical
solution gets closer, the numerical solution may be quadratically
convergent to the exact solution. We report the $L^2$ errors $\|
\bm{p}_h^n - \bm{p} \|_{L^2(\Omega)}$ in Fig.~\ref{fig:ex2l2history}
where $\bm{p}_h^n$ is the numerical solution at iteration step $n$ and
$\bm{p}_h^0$ is the starting value.  The decrease of the $L^2$ error
is significant even with such a bad initial value. 

\begin{table}
  \renewcommand{\arraystretch}{1.3}
  \centering
  \scalebox{0.88}{
  \begin{tabular}{c|r|c|c|c|c|c|c|c|c|c}
    \hline\hline
    $m$ & $h~~~$ & $ \pnorm{\bm{p} - \bm{p}_h} $ & order & $ \| \bm{p} -
    \bm{p}_h\|_{L^2(\Omega)}$  & order & $ \unorm{u - u_h} $
    & order & $ \| u - u_h\|_{L^2(\Omega)}$ & order & \# iter
    \\
    \hline 
    \multirow{4}{0.6cm}{1} & $1/10$ & 
    1.644e-1 & -    & 7.315e-3 & - & 
    6.801e-2 & -    & 1.290e-3 & - & 10 \\
    \cline{2-11}
    & $1/20$ &
    8.088e-2 & 1.02 & 2.310e-3 & 1.67 & 
    3.416e-2 & 1.00 & 3.813e-4 & 1.75 & 10 \\
    \cline{2-11}
    & $1/40$ &
    3.913e-2 & 1.05 & 5.620e-4 & 2.04 & 
    1.709e-2 & 1.00 & 8.801e-5 & 2.12 & 10 \\
    \cline{2-11}
    & $1/80$ &
    1.953e-2 & 1.00 & 1.460e-4 & 1.96 & 
    8.553e-3 & 1.00 & 2.100e-5 & 2.07 & 11 \\
    \hline
    \multirow{4}{0.6cm}{2} & $1/10$ & 
    1.621e-2 & -    & 2.519e-3 & - & 
    2.972e-3 & -    & 5.368e-4 & - & 9 \\
    \cline{2-11}
    & $1/20$ &
    3.844e-3 & 2.08 & 6.960e-4 & 1.86 & 
    8.039e-4 & 1.89 & 1.481e-4 & 1.85 & 10 \\
    \cline{2-11}
    & $1/40$ &
    9.837e-4 & 1.97 & 1.807e-4 & 1.95 & 
    2.071e-4 & 1.95 & 3.826e-5 & 1.95 & 12 \\
    \cline{2-11}
    & $1/80$ &
    2.492e-4 & 1.99 & 4.576e-5 & 1.99 & 
    5.231e-5 & 1.99 & 9.667e-6 & 1.99 & 13 \\
    \hline
    \multirow{4}{0.6cm}{3} & $1/10$ & 
    1.415e-3 & -    & 4.783e-5 & - & 
    5.007e-5 & -    & 3.923e-6 & - & 9 \\
    \cline{2-11}
    & $1/20$ &
    1.515e-4 & 3.22 & 4.317e-6 & 3.47 & 
    5.453e-6 & 3.20 & 4.640e-7 & 3.08 & 10 \\
    \cline{2-11}
    & $1/40$ &
    1.845e-5 & 3.04 & 2.912e-7 & 3.89 & 
    5.348e-7 & 3.35 & 3.157e-8 & 3.88 & 11 \\
    \cline{2-11}
    & $1/80$ &
    2.287e-6 & 3.01 & 1.892e-8 & 3.95 & 
    6.031e-8 & 3.15 & 2.027e-9 & 3.97 & 14 \\
    \hline\hline
  \end{tabular}}
  \caption{Convergence of the Example 2.}
  \label{tab:example2}
\end{table}

\begin{figure}
  \centering
  \includegraphics[width=0.32\textwidth]{./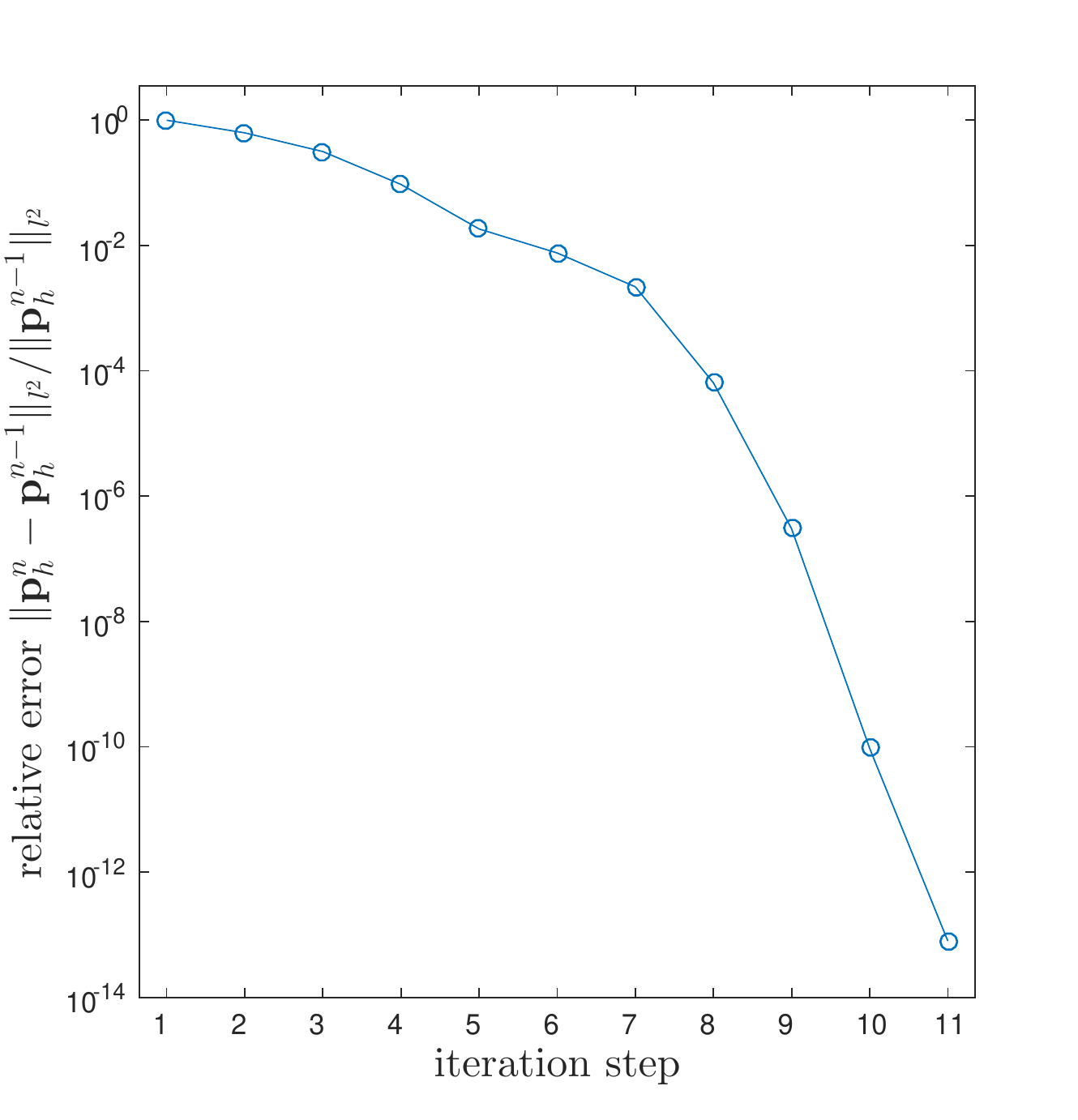}
  \hspace{0pt}
  \includegraphics[width=0.32\textwidth]{./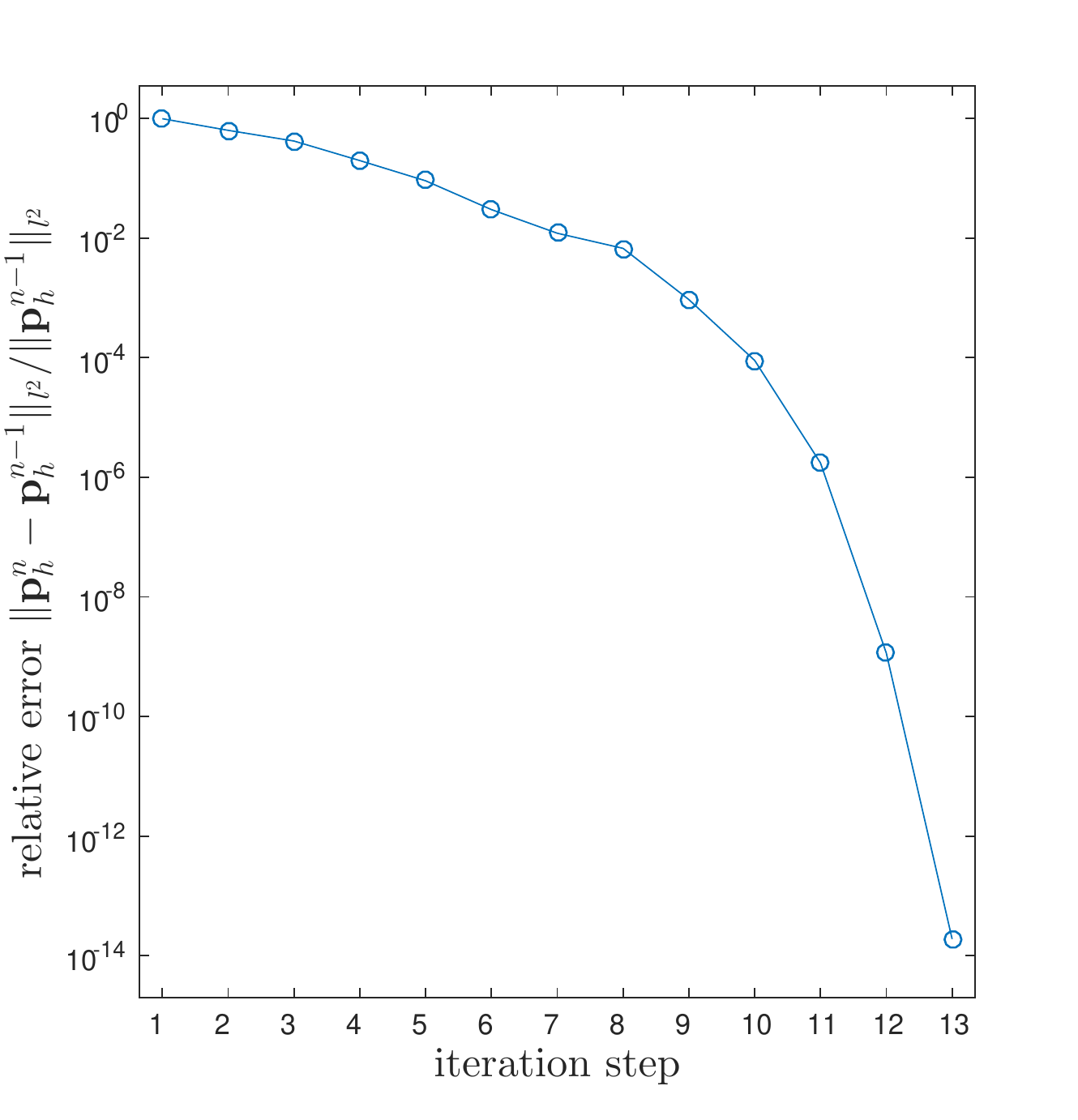}
  \hspace{0pt}
  \includegraphics[width=0.32\textwidth]{./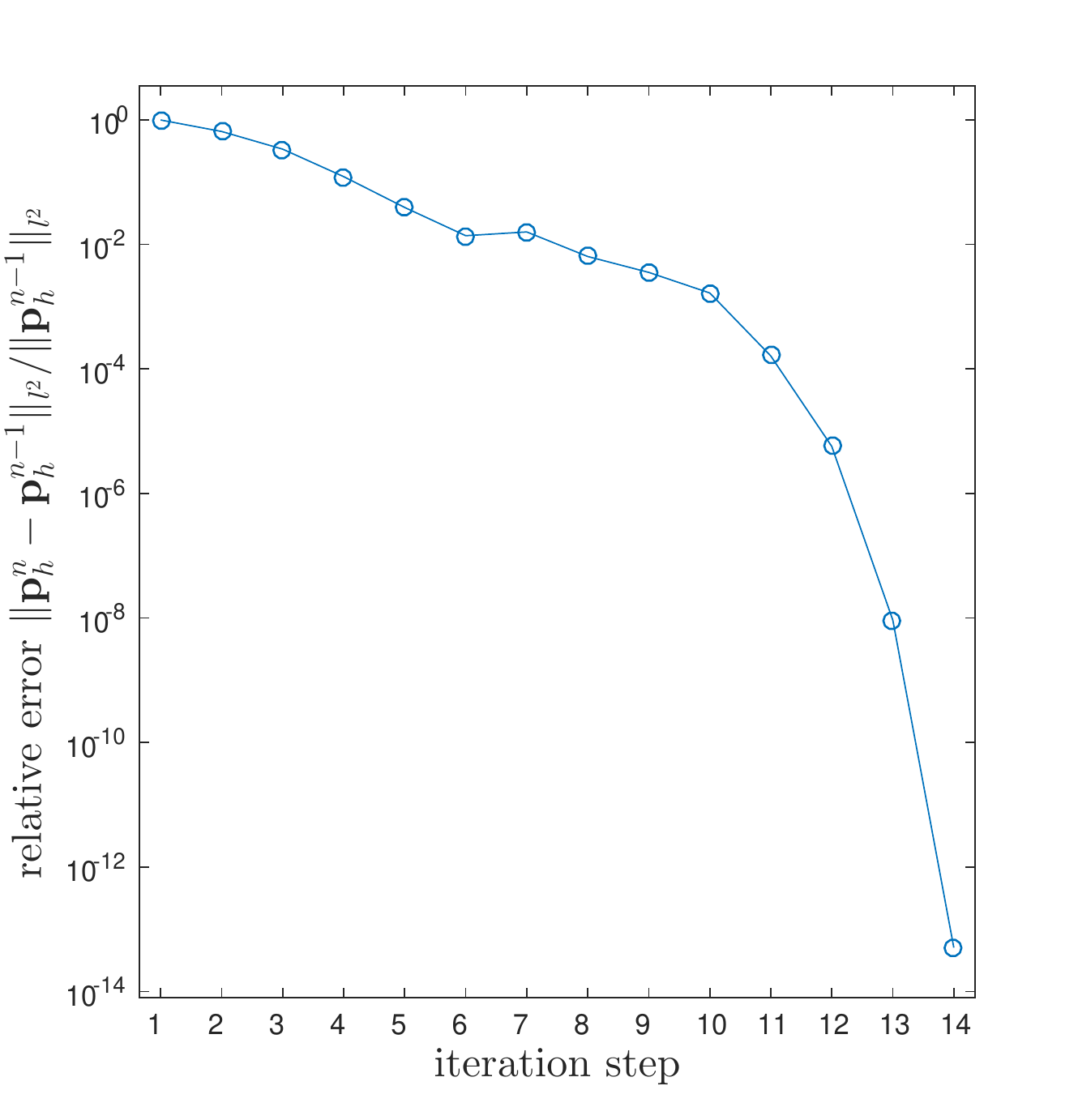}
  \caption{Convergence history of relative error with $m = 1$ (left) /
  $m = 2$ (middle) / $m=3$ (right).}
  \label{fig:ex2history}
\end{figure}

\begin{figure}
  \centering
  \includegraphics[width=0.32\textwidth]{./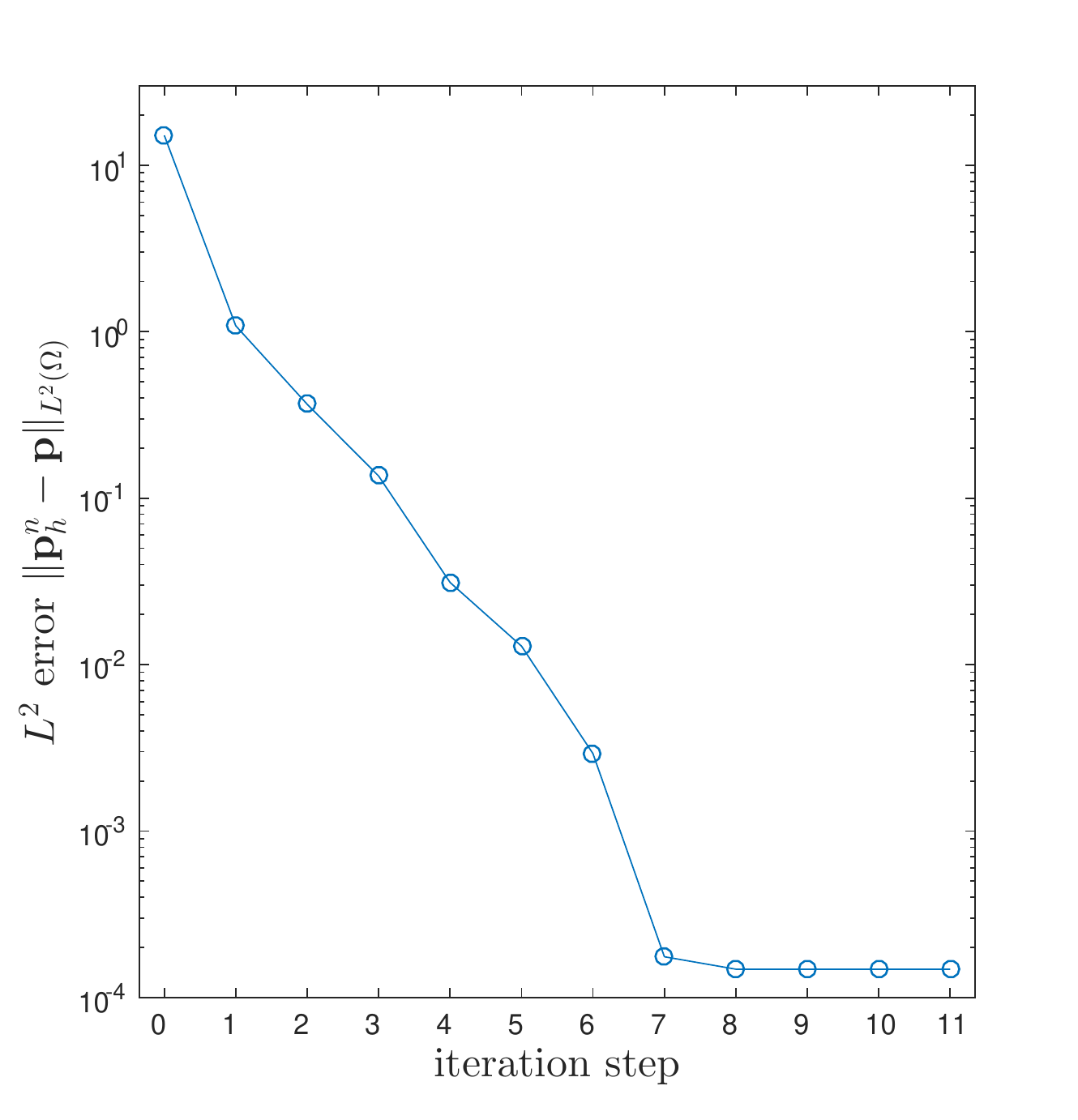}
  \hspace{0pt}
  \includegraphics[width=0.32\textwidth]{./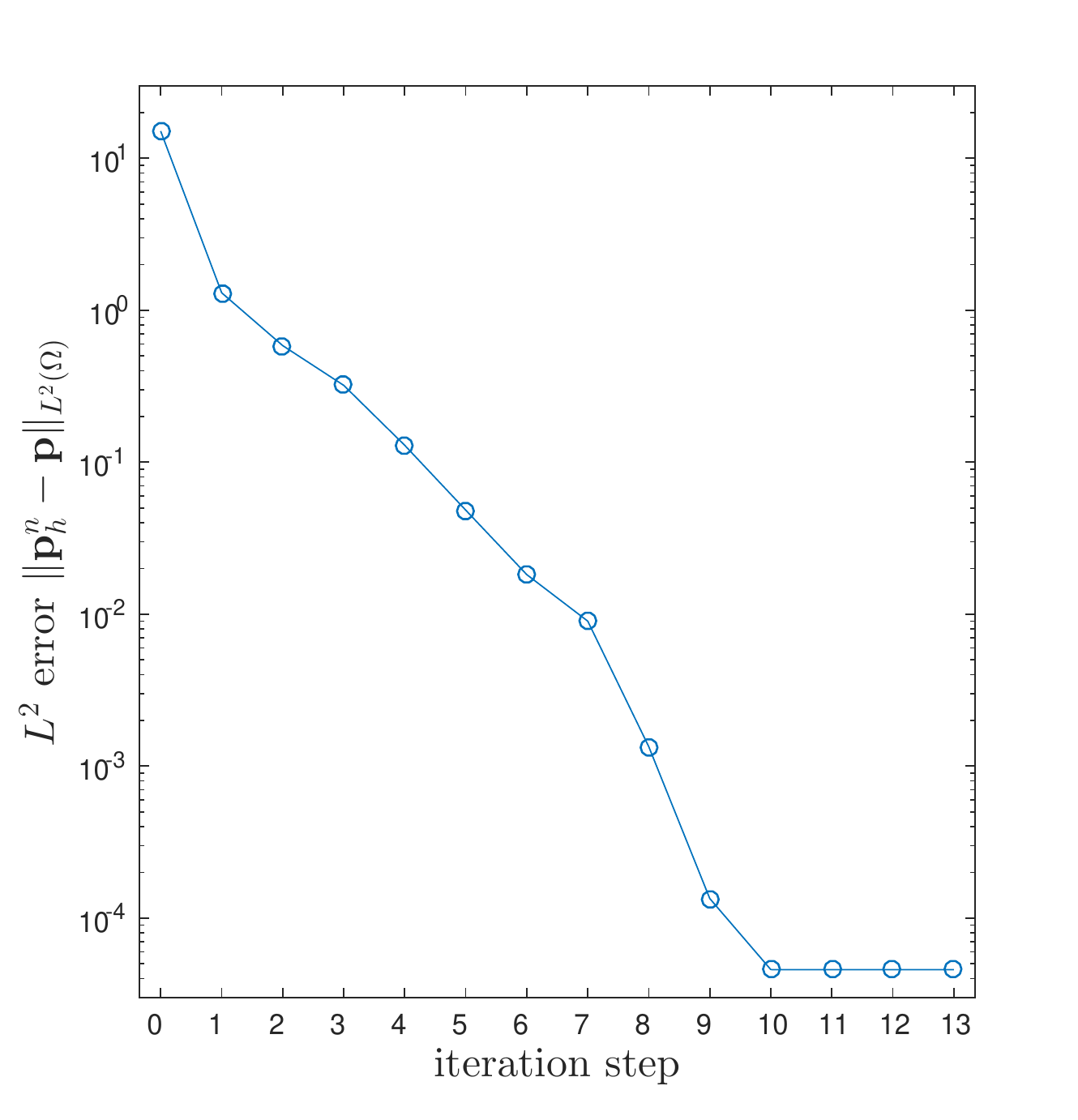}
  \hspace{0pt}
  \includegraphics[width=0.32\textwidth]{./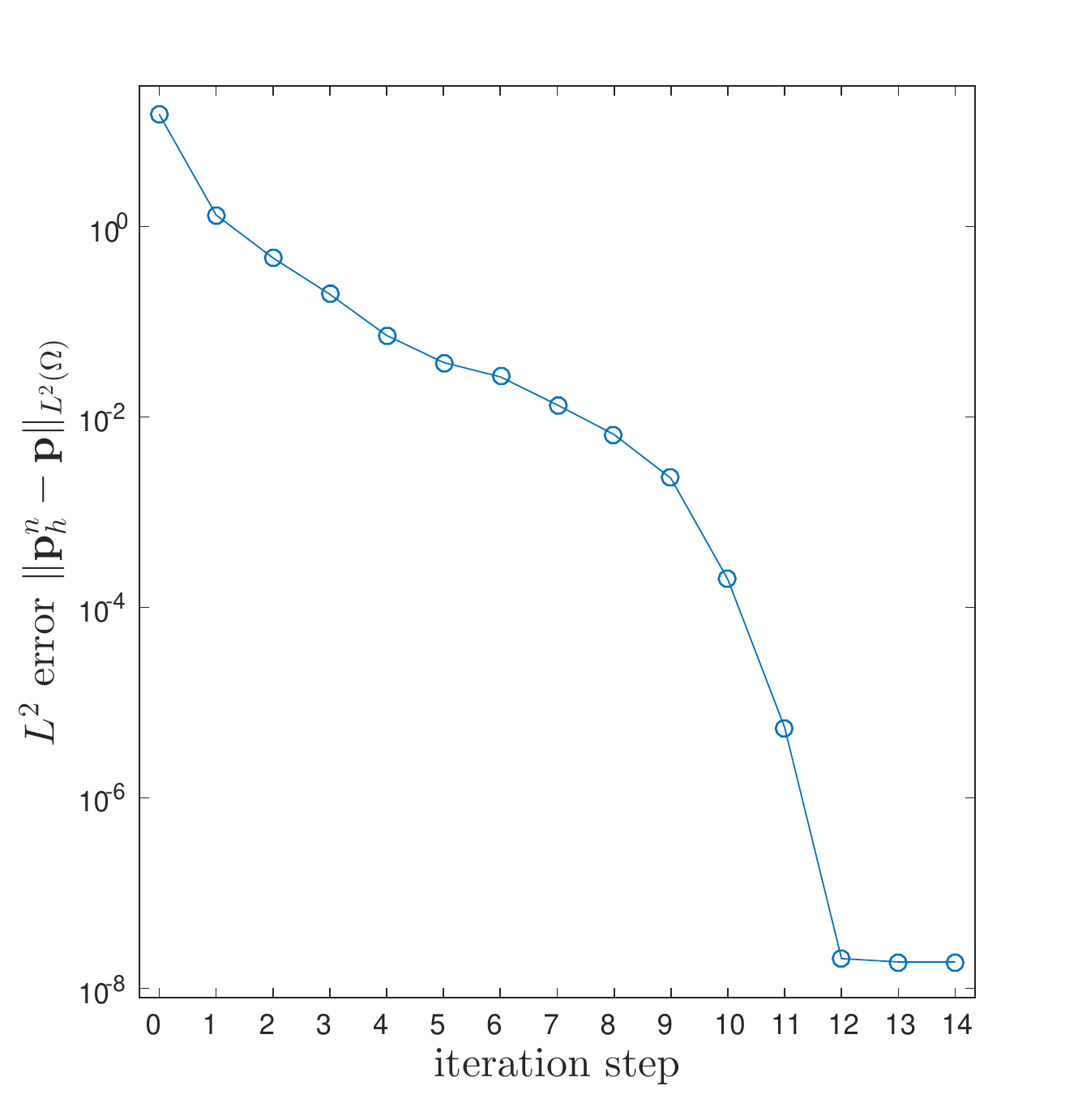}
  \caption{Convergence history of $L^2$ error with $m = 1$ (left) / $m
  = 2$ (middle) / $m=3$ (right).}
  \label{fig:ex2l2history}
\end{figure}

Moreover, we test our method with more initial values that are in a
wider range. We multiply the initial value $u^0(x, y)$ by a
coefficient $\alpha$ and we let $\alpha u^0(x, y)$ be the initial
value. Here $\alpha$ is taken as $\alpha = 0.01$, $0.1$, $1$,
$10$, $100$ and these initial values are from very close to zero to  
very far from the exact solution. We list the errors $\pnorm{ \bm{p} -
\bm{p}_h}$ and the number of iterations on the mesh level $h = 1/40$
in Tab.~\ref{tab:ex2alpha}. The convergence history is presented in
Fig.~\ref{fig:ex2alphahistory} and we observe that for all initial
values the convergence would speed up when the iteration solution gets
close to the exact solution. The convergence speed to the nonlinear
iteration may be related to the piecewise convexity of the numerical
solution and we will discuss in detail in next example. Different from
the classical Newton iteration, our method does not require a
sufficiently good initial guess and demonstrates a very wide range of
the convergence to the nonlinear system. In Section
\ref{sec:comparison}, we illustrate that this compelling feature is
due to the use of the reconstructed space.

\begin{table}
  \renewcommand{\arraystretch}{1.2}
  \centering
  \begin{tabular}{c|c|c|c|c|c|c}
    \hline\hline
    \multicolumn{2}{r|}{$\alpha = \quad$} & $0.01$ & $0.1$ & $1$ & $10$ & $100$ \\
    \hline 
    \multirow{2}{1.2cm}{$m=1$} & \# iter & 10 & 8 & 10 & 8 & 13  \\
    \cline{2-7}
    & $\pnorm{\bm{p} - \bm{p}_h}$ & 3.913e-2 &  3.913e-2 & 3.913e-2
    &3.913e-2 &3.913e-2 \\
    \hline 
    \multirow{2}{1.2cm}{$m=2$} & \# iter & 11 & 8 & 11 & 7 & 13  \\
    \cline{2-7}
    & $\pnorm{\bm{p} - \bm{p}_h}$ & 9.837e-4 &  9.837e-4 & 9.837e-4
    &9.837e-4 &9.837e-4 \\
    \hline 
    \multirow{2}{1.2cm}{$m=3$} & \# iter & 12 & 9 & 11 & 8 & 14  \\
    \cline{2-7}
    & $\pnorm{\bm{p} - \bm{p}_h}$ & 1.845e-5 &  1.845e-5 & 1.845e-5
    &1.845e-5 &1.845e-5 \\
    \hline\hline
  \end{tabular}
  \caption{Iteration steps and numerical errors for the initial values
  with different $\alpha$. }
  \label{tab:ex2alpha}
\end{table}

\begin{figure}
  \centering
  \includegraphics[width=0.32\textwidth]{./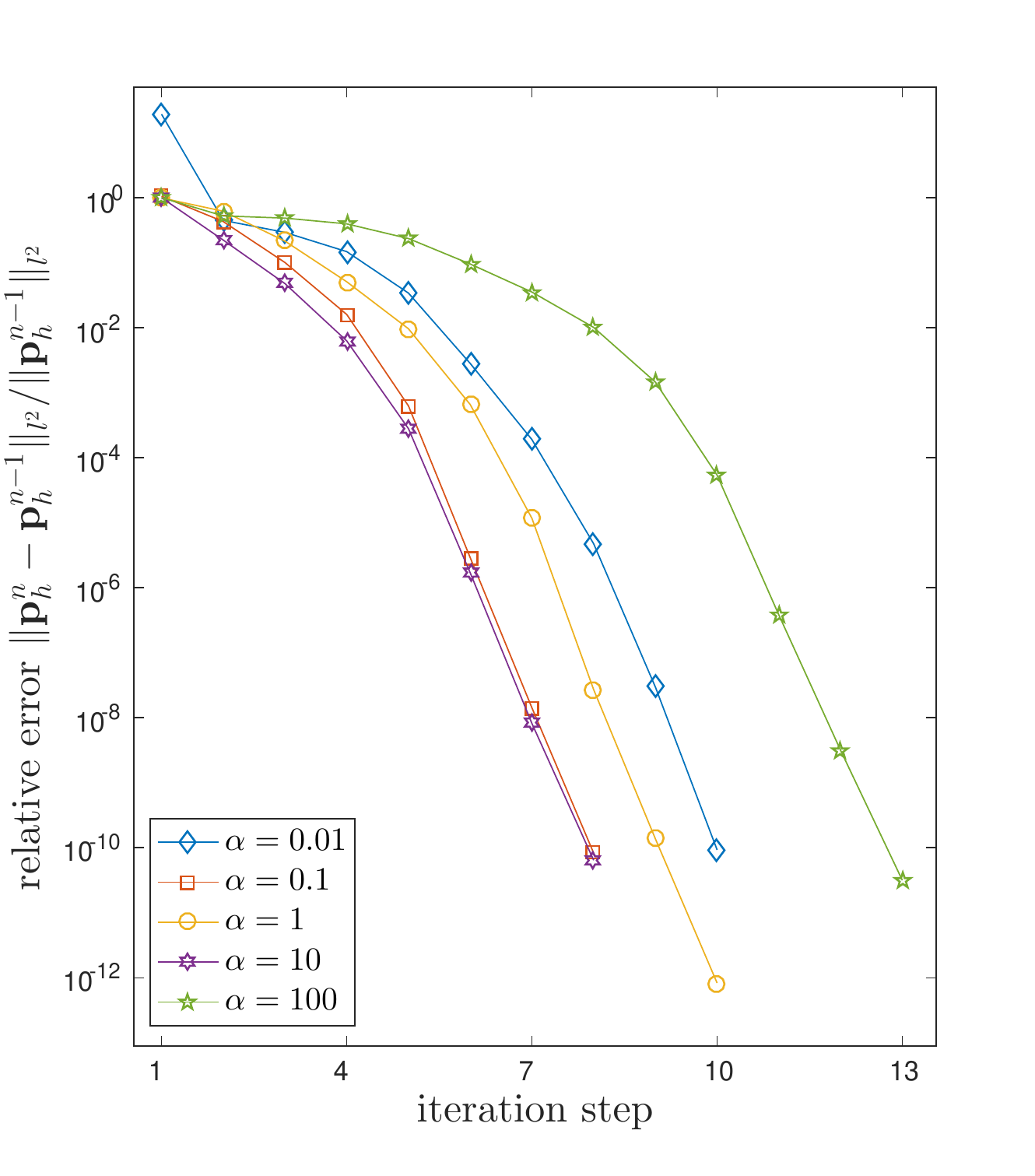}
  \hspace{0pt}
  \includegraphics[width=0.32\textwidth]{./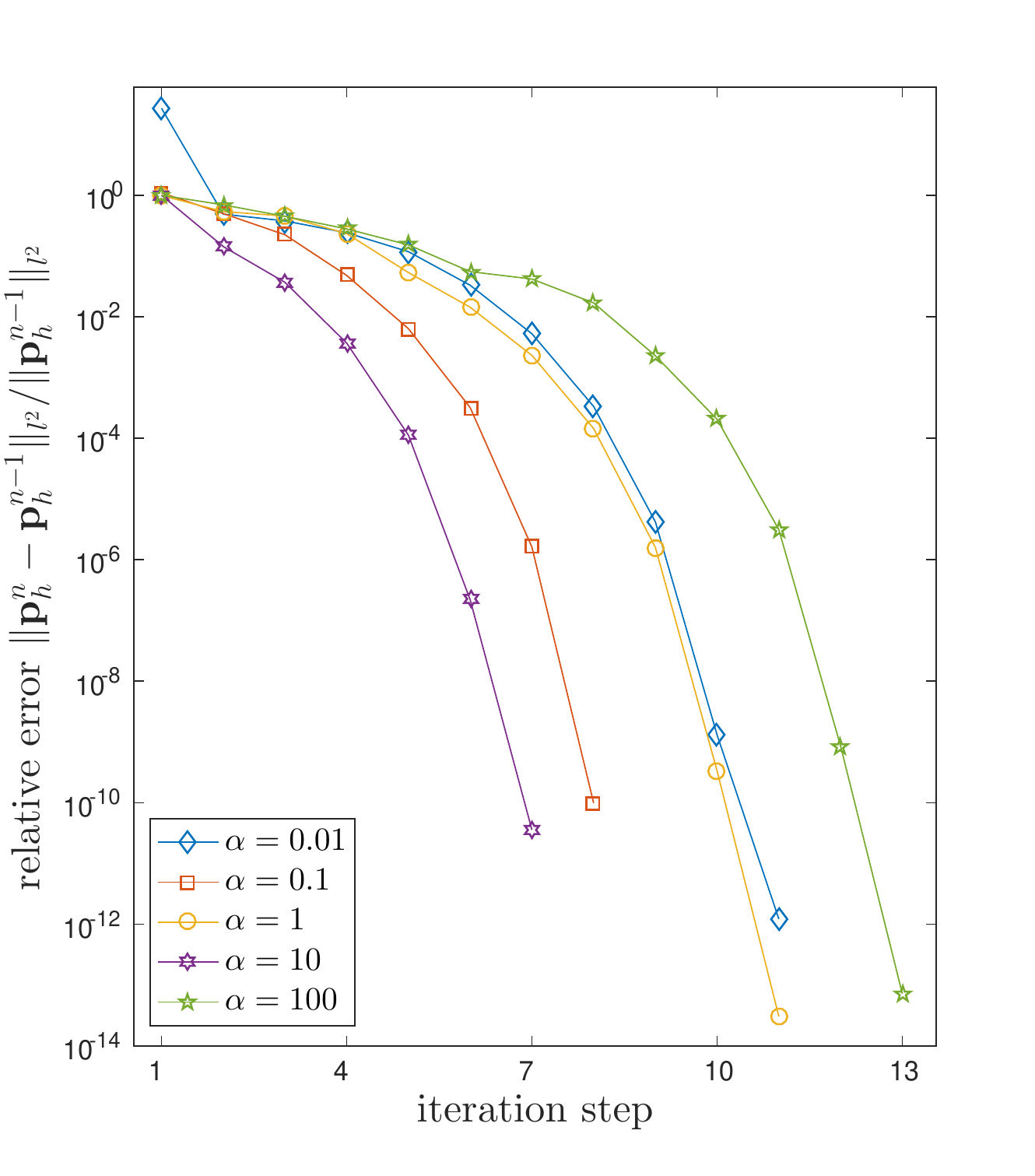}
  \hspace{0pt}
  \includegraphics[width=0.32\textwidth]{./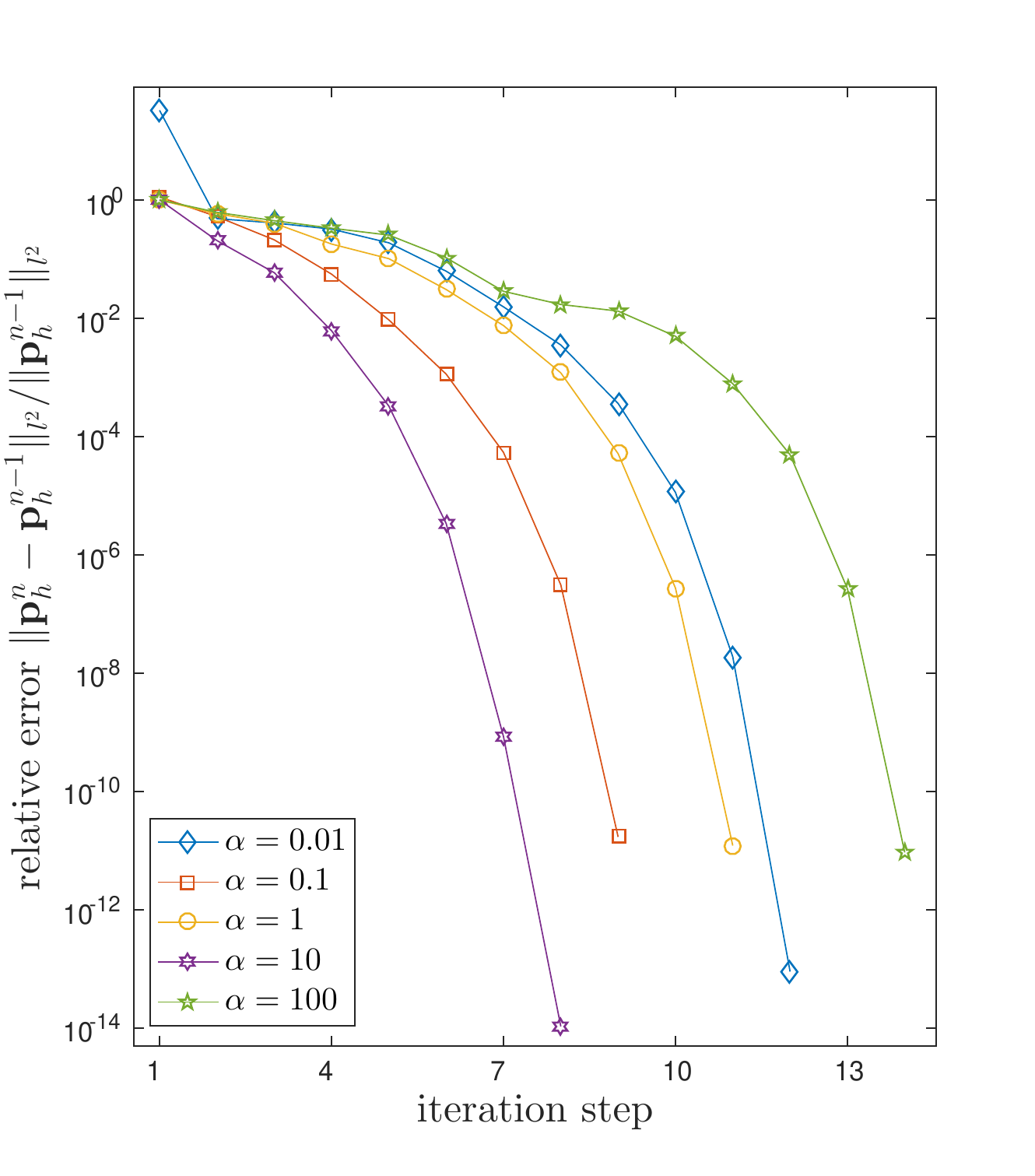}
  \caption{Convergence history of relative error with $m = 1$ (left) /
  $m = 2$ (middle) / $m=3$ (right).}
  \label{fig:ex2alphahistory}
\end{figure}

\paragraph{\textbf{Example 3:}} In this example, we consider the
case with exact solution \cite{Davydov2013numerical,
Westphal2019newton}
\begin{displaymath}
  u(x, y) = -\sqrt{R^2 - x^2 - y^2}
\end{displaymath}
with $R^2 = 3$. The data function $f$ and $g$ are taken accordingly.
The function of the initial guess is taken as the smooth convex
function
\begin{displaymath}
  u^0(x, y) = 5x^2 + 5y^2 + \sin(\pi x) \sin(\pi y).
\end{displaymath}
The numerical errors and the number of nonlinear iterations are
gathered in Tab.~\ref{tab:example3}.  In this test, we want to
emphasize the numerical convexity of the numerical solution. The
classical solution to the elliptic \MA equation must be strictly
convex on the whole domain and the convexity restriction is important
for the numerical scheme to capture the classical solution.  In this
test, we present the numerical convexity of the numerical solution at
each nonlinear step. At the iteration step $n$, we count the element
on which the numerical gradient $\nabla \bm{p}_h^n$ is not strictly
convex and such an element is called Non-Convex element. We present
the results at mesh level $h = 1/40$ in Fig.~\ref{fig:numnospd}, which
summarizes the number of Non-Convex elements and the ratio between the
number of Non-Convex elements and the number of all elements in
partition at each iteration. We observe that at the first two
steps, there are about $25\%$ Non-Convex elements since the initial
guess is very far from the exact solution.  In the first nonlinear
steps, the number of
Non-Convex elements decreases rapidly and after at most five steps the
numerical solution on all elements is strictly convex. We also report
the history of the relative error $  { \| \bm{p}_h^{n} - \bm{p}_h^{n -
1} \|_{l^2} } / { \| \bm{p}_h^{n - 1} \|_{l^2} }$  and it can be seen
that the convergence speeds up after the numerical solution becomes
strictly convex on any element.  For $m=1$, $4$ steps are required to
lead the numerical solution to be piecewise strictly convex. Hence,
the convergence speeds up $1$ steps in advance compared to the cases
$m=2, 3$, which meets our expectation. In
Fig.~\ref{fig:ex3spdhistory}, we mark Non-Convex elements at iteration
step $2$, $3$, $4$ for the case $m=3$ and clearly the Non-Convex
elements disappear rapidly. 

\begin{table}
  \renewcommand{\arraystretch}{1.3}
  \centering
  \scalebox{0.88}{
  \begin{tabular}{c|r|c|c|c|c|c|c|c|c|c}
    \hline\hline
    $m$ & $h~~~$ & $ \pnorm{\bm{p} - \bm{p}_h} $ & order & $ \| \bm{p} -
    \bm{p}_h\|_{L^2(\Omega)}$  & order & $ \unorm{u - u_h} $
    & order & $ \| u - u_h\|_{L^2(\Omega)}$ & order & \# iter
    \\
    \hline 
    \multirow{4}{0.6cm}{1} & $1/10$ & 
    5.322e-2 & -    & 3.659e-3 & - & 
    2.467e-2 & -    & 6.307e-4 & - & 9 \\
    \cline{2-11}
    & $1/20$ &
    2.752e-2 & 0.95 & 1.372e-3 & 1.41 & 
    1.237e-2 & 0.99 & 2.275e-4 & 1.47 & 9 \\
    \cline{2-11}
    & $1/40$ &
    1.296e-2 & 1.08 & 3.674e-4 & 1.91 & 
    6.177e-3 & 1.00 & 6.038e-5 & 1.92 & 9 \\
    \cline{2-11}
    & $1/80$ &
    6.246e-3 & 1.03 & 9.208e-5 & 2.00 & 
    3.087e-3 & 1.00 & 1.420e-5 & 2.09 & 9 \\
    \hline
    \multirow{4}{0.6cm}{2} & $1/10$ & 
    8.991e-3 & -    & 7.772e-4 & - & 
    2.972e-3 & -    & 1.595e-4 & - & 8 \\
    \cline{2-11}
    & $1/20$ &
    2.078e-3 & 2.11 & 1.929e-4 & 2.01 & 
    8.039e-4 & 1.89 & 3.972e-5 & 2.00 & 9 \\
    \cline{2-11}
    & $1/40$ &
    5.257e-4 & 1.98 & 5.207e-5 & 1.90 & 
    2.071e-4 & 1.96 & 1.045e-5 & 1.93 & 10 \\
    \cline{2-11}
    & $1/80$ &
    1.284e-4 & 2.03 & 1.339e-5 & 1.96 & 
    5.231e-5 & 1.99 & 2.656e-6 & 1.98 & 10 \\
    \hline
    \multirow{4}{0.6cm}{3} & $1/10$ & 
    2.692e-3 & -    & 1.088e-4 & - & 
    2.526e-5 & -    & 1.188e-5 & - & 8 \\
    \cline{2-11}
    & $1/20$ &
    2.466e-4 & 3.44 & 4.882e-6 & 4.47 & 
    2.582e-6 & 3.29 & 4.522e-7 & 4.72 & 9 \\
    \cline{2-11}
    & $1/40$ &
    2.813e-5 & 3.13 & 2.812e-7 & 4.11 & 
    2.832e-7 & 3.19 & 2.581e-8 & 4.13 & 10 \\
    \cline{2-11}
    & $1/80$ &
    3.272e-6 & 3.10 & 1.809e-8 & 3.96 & 
    3.492e-8 & 3.03 & 1.615e-9 & 3.99 & 11 \\
    \hline\hline
  \end{tabular}}
  \caption{Convergence of the Example 3.}
  \label{tab:example3}
\end{table}

\begin{figure}
  \centering
  \includegraphics[width=0.3\textwidth]{./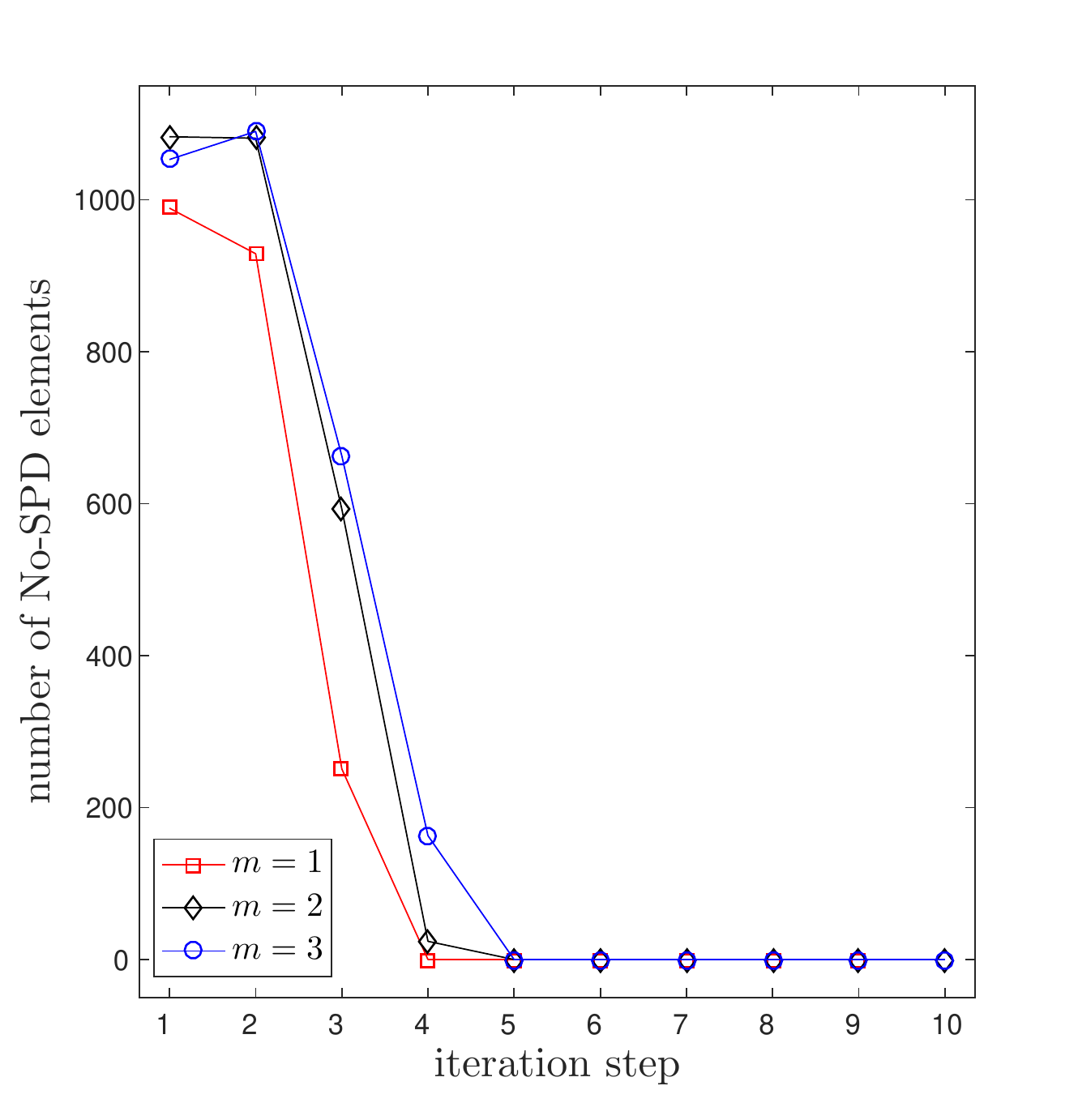}
  \includegraphics[width=0.3\textwidth]{./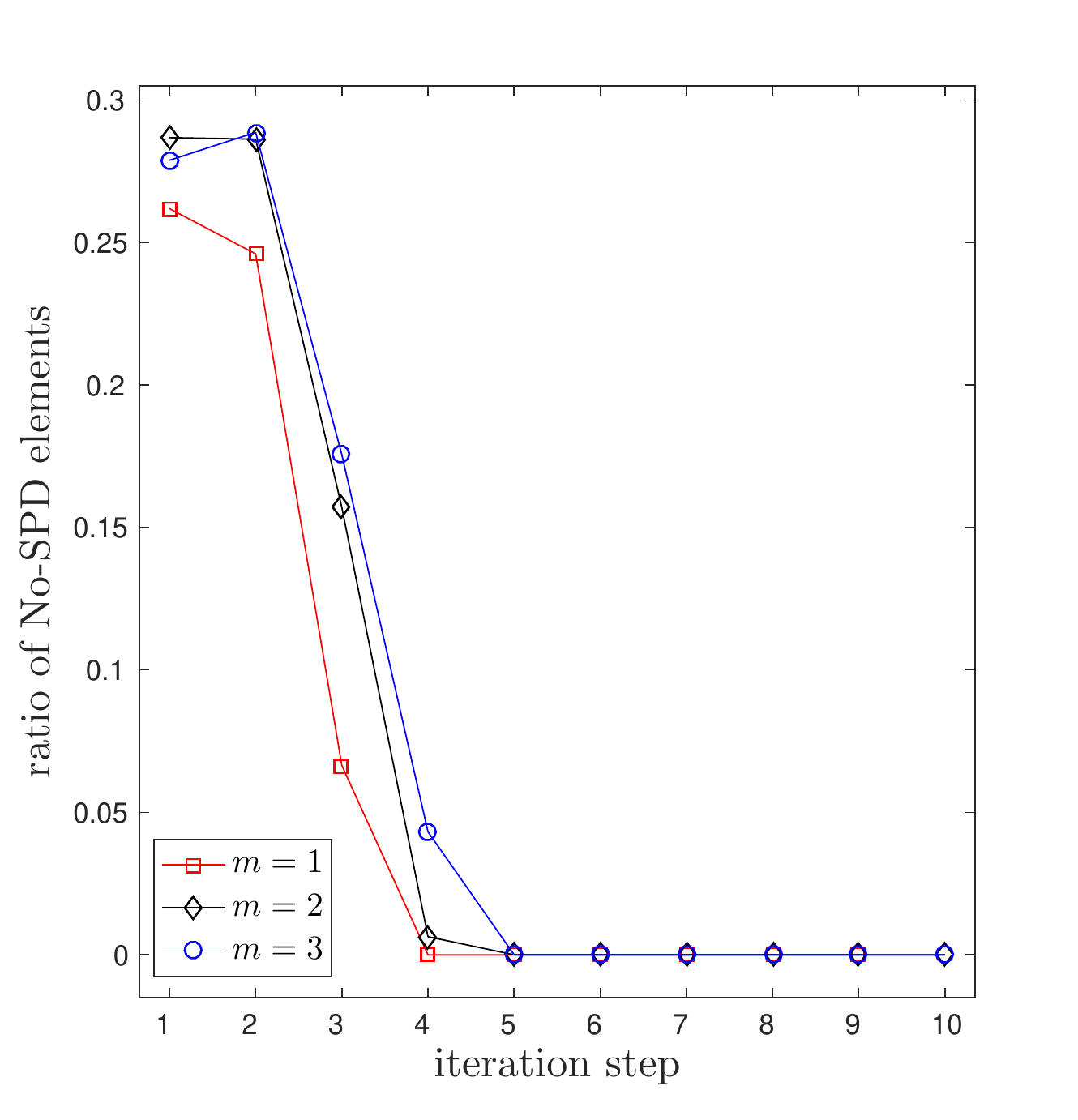}
  \includegraphics[width=0.3\textwidth]{./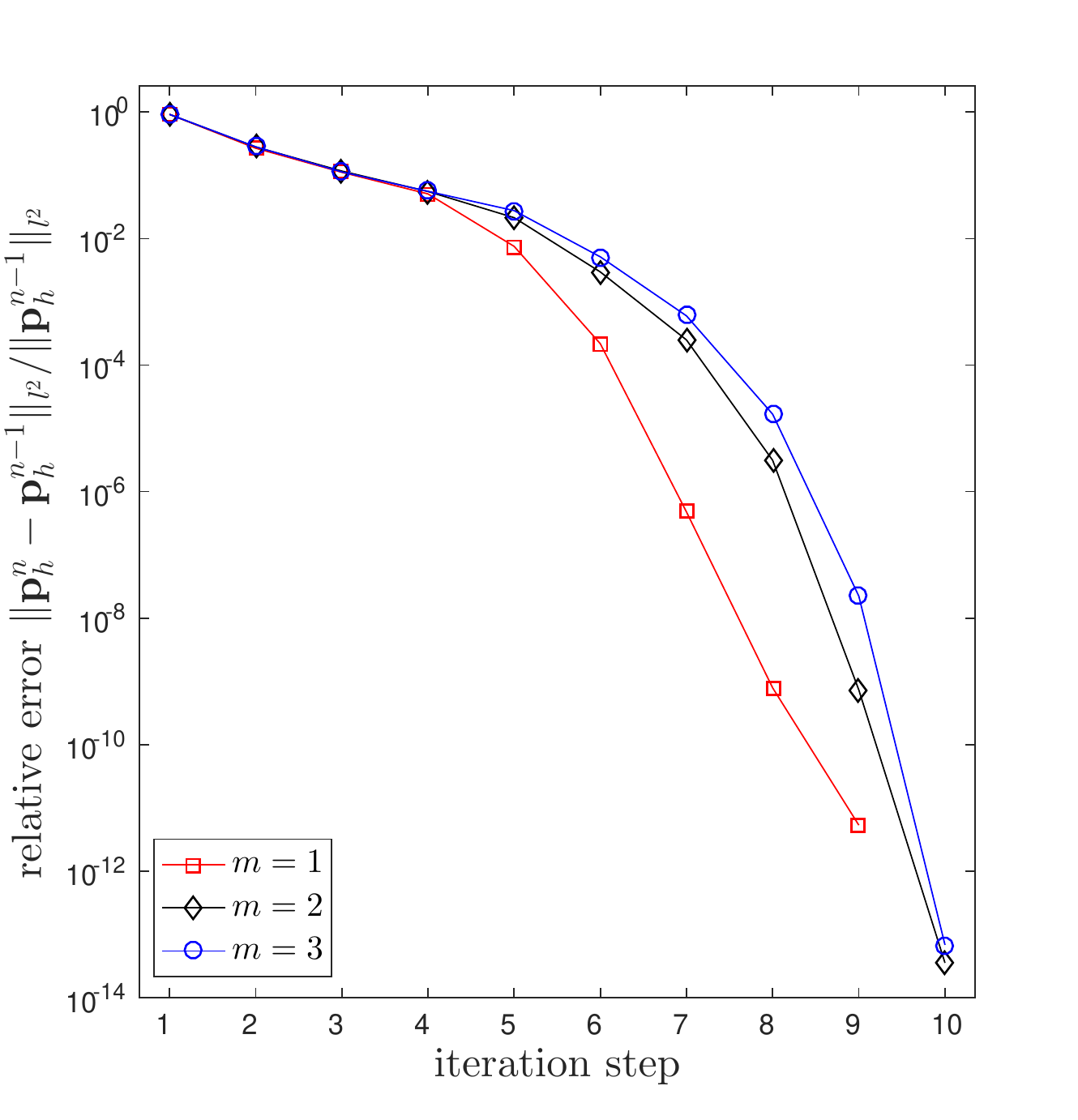}
  \caption{Number of Non-Convex elements (left) / ratio of Non-Convex
  elements to all elements (middle) / convergence history of relative
  error (right). }
  \label{fig:numnospd}
\end{figure}

\begin{figure}
  \centering
  \includegraphics[width=0.28\textwidth]{./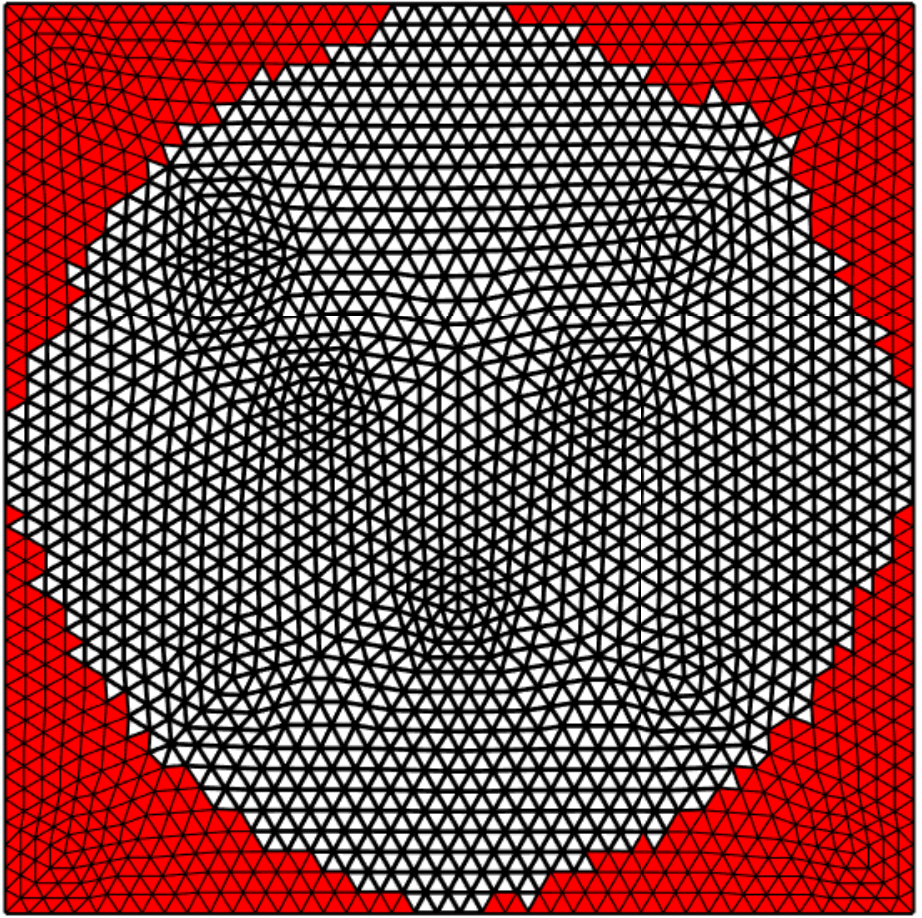}
  \hspace{10pt}
  \includegraphics[width=0.28\textwidth]{./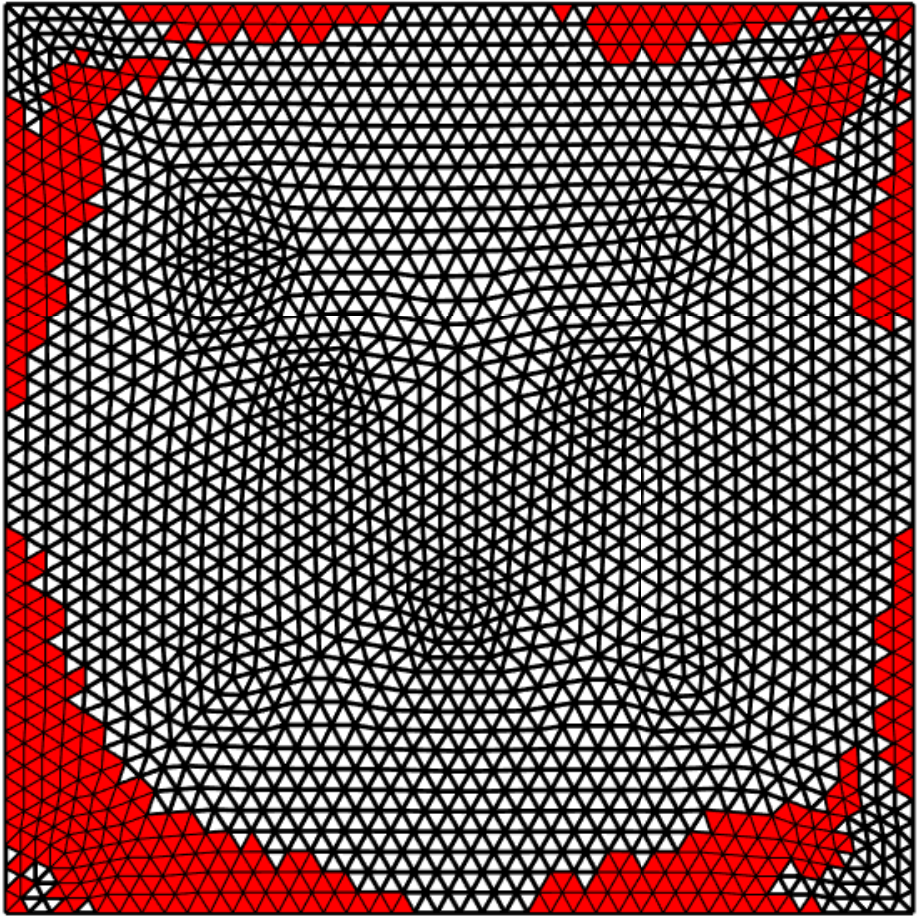}
  \hspace{10pt}
  \includegraphics[width=0.28\textwidth]{./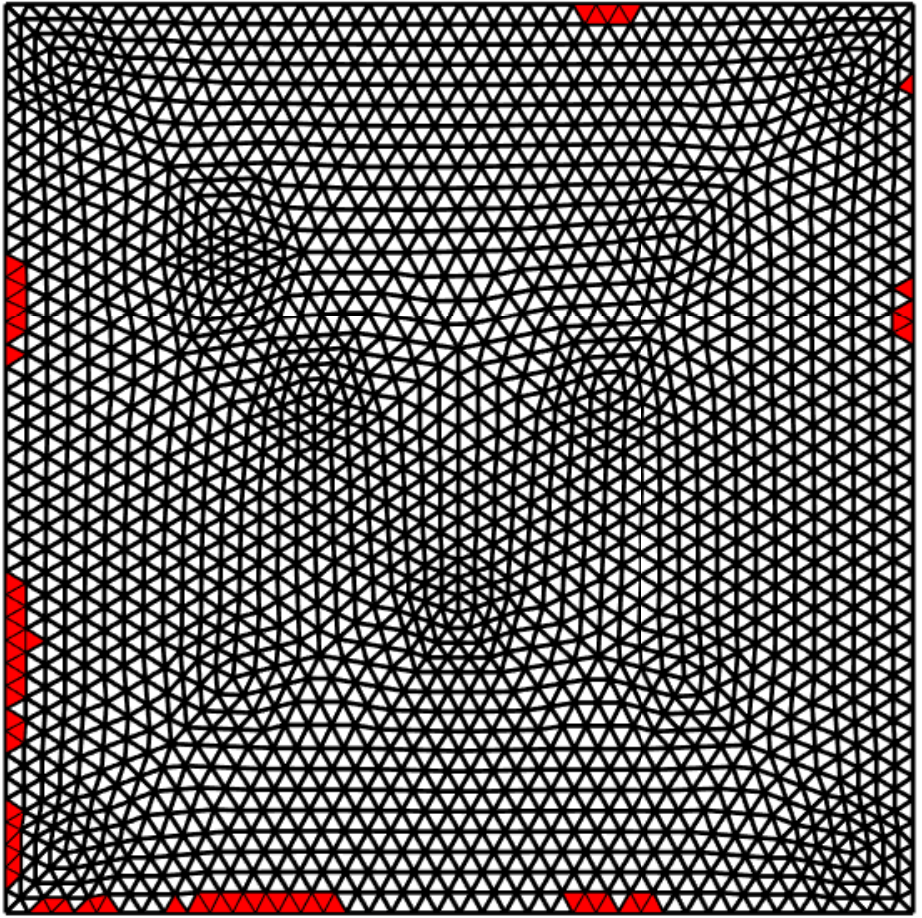}
  \caption{Non-Convex elements (red elements) in partition at step $2$
  (left) / at step $3$ (middle) / at step $4$ (right).}
  \label{fig:ex3spdhistory}
\end{figure}

\paragraph{\textbf{Example 4:}} In this test, we consider the
problem that the exact solution is given by
\begin{displaymath}
  u(x, y) = (x^2 + y^2)^{9/2} + 0.5(x^2 + y^2),
\end{displaymath}
and the source term $f$ and the boundary term $g$ are taken
accordingly. This function $u \in H^{5.5}(\Omega)$ is strictly convex
over the domain $(0, 1)^2$. The initial values for all previous
numerical tests are strictly convex or are the solution to a closely
related problem. In this test, we adopt a smooth but non-convex
function to start the iteration to show the robustness of our method.
We take the function
\begin{displaymath}
  u^0(x, y) = x^2 - y^2
\end{displaymath}
to be the initial guess for the nonlinear system. The numerical errors
are listed in Tab.~\ref{tab:example4} and the convergence rates are
consistent with our previous numerical results. With the non-convex
initial guess, more nonlinear iterations are required for converging
to the exact solution.  In Fig.~\ref{fig:ex4his}, we plot the history
of the relative error ${ \| \bm{p}_h^{n} - \bm{p}_h^{n - 1} \|_{l^2} }
/ { \| \bm{p}_h^{n - 1} \|_{l^2} }$ and the ratio of Non-Convex
elements on the mesh level $h = 1/40$. The numerical results clearly
confirm our expectation that the convergence may be divided into two
parts. The first process is to make the numerical solution be
piecewise convex and during this process the decrease of the relative
error $ { \| \bm{p}_h^{n} - \bm{p}_h^{n - 1} \|_{l^2} } / { \|
\bm{p}_h^{n - 1} \|_{l^2} }$ might be slow. In this process, our
method can automatically correct the Non-Convex elements and the
number of Non-Convex elements is decreasing to zero.  The second
process is iterating with the piecewise convex numerical solution.
In this process, the residual drops very rapidly and after very few
nonlinear iterations the stop criterion is met. For this non-convex
initial guess, the first process requires more iterations to ensure
the piecewise convexity and the second process only involves at most
five steps for all accuracy $m$.  In addition, in
Fig.~\ref{fig:ex4spdhistory} we depict the Non-Convex elements in
partition at three iteration steps with accuracy $m=3$. The numerical
solution produced by our method seems to be automatically piecewise
convex, which meets the convex constraint to the \MA equation. This
property is still due to the use of the reconstructed space, as
illustrated in Section \ref{sec:comparison}.

\begin{table}
  \renewcommand{\arraystretch}{1.3}
  \centering
  \scalebox{0.88}{
    \begin{tabular}{c|r|c|c|c|c|c|c|c|c|c}
    \hline\hline
    $m$ & $h~~~$ & $ \pnorm{\bm{p} - \bm{p}_h} $ & order & $ \| \bm{p} -
    \bm{p}_h\|_{L^2(\Omega)}$  & order & $ \unorm{u - u_h} $
    & order & $ \| u - u_h\|_{L^2(\Omega)}$ & order & \# iter
    \\
    \hline 
    \multirow{4}{0.6cm}{1} & $1/10$ & 
    9.529e-1 & -    & 4.739e-2 & - & 
    3.200e-1 & -    & 7.366e-3 & - &    19 \\
    \cline{2-11}
    & $1/20$ &
    4.623e-1 & 1.04 & 1.301e-2 & 1.86 & 
    1.602e-1 & 1.00 & 1.973e-3 & 1.90 & 24 \\
    \cline{2-11}
    & $1/40$ &
    2.183e-1 & 1.08 & 3.343e-3 & 1.96 & 
    8.013e-2 & 1.00 & 4.787e-4 & 2.03 & 32 \\
    \cline{2-11}
    & $1/80$ &
    1.036e-2 & 1.07 & 8.601e-4 & 1.96 & 
    4.008e-2 & 1.00 & 1.179e-4 & 2.02 & 55 \\
    \hline
    \multirow{4}{0.6cm}{2} & $1/10$ & 
    1.563e-2 & -    & 7.895e-2 & - & 
    1.816e-2 & -    & 3.358e-3 & - &    21 \\
    \cline{2-11}
    & $1/20$ &
    4.067e-3 & 1.94 & 1.906e-2 & 2.05 & 
    4.691e-3 & 1.95 & 8.783e-4 & 1.93 & 29 \\
    \cline{2-11}
    & $1/40$ &
    1.028e-3 & 1.98 & 4.779e-3 & 2.00 & 
    1.184e-3 & 1.99 & 2.222e-4 & 1.98 & 24 \\
    \cline{2-11}
    & $1/80$ &
    2.578e-4 & 1.99 & 1.197e-3 & 2.00 & 
    2.969e-4 & 2.00 & 5.573e-5 & 2.00 & 33 \\
    \hline
    \multirow{4}{0.6cm}{3} & $1/10$ & 
    2.668e-3 & -    & 1.253e-4 & - & 
    1.518e-4 & -    & 7.632e-6 & - &    21 \\
    \cline{2-11}
    & $1/20$ &
    2.928e-4 & 3.19 & 1.1083-5 & 3.50 & 
    1.773e-5 & 3.10 & 1.053e-6 & 2.86 & 24 \\
    \cline{2-11}
    & $1/40$ &
    3.387e-5 & 3.11 & 6.811e-7 & 4.02 & 
    2.005e-6 & 3.13 & 6.623e-8 & 3.99 & 38 \\
    \cline{2-11}
    & $1/80$ &
    4.058e-6 & 3.06 & 4.145e-8 & 4.03 & 
    2.446e-7 & 3.03 & 4.327e-9 & 3.95 & 33 \\
    \hline\hline
  \end{tabular}}
  \caption{Convergence of the Example 4.}
  \label{tab:example4}
\end{table}

\begin{figure}
  \centering
  \includegraphics[width=0.4\textwidth]{./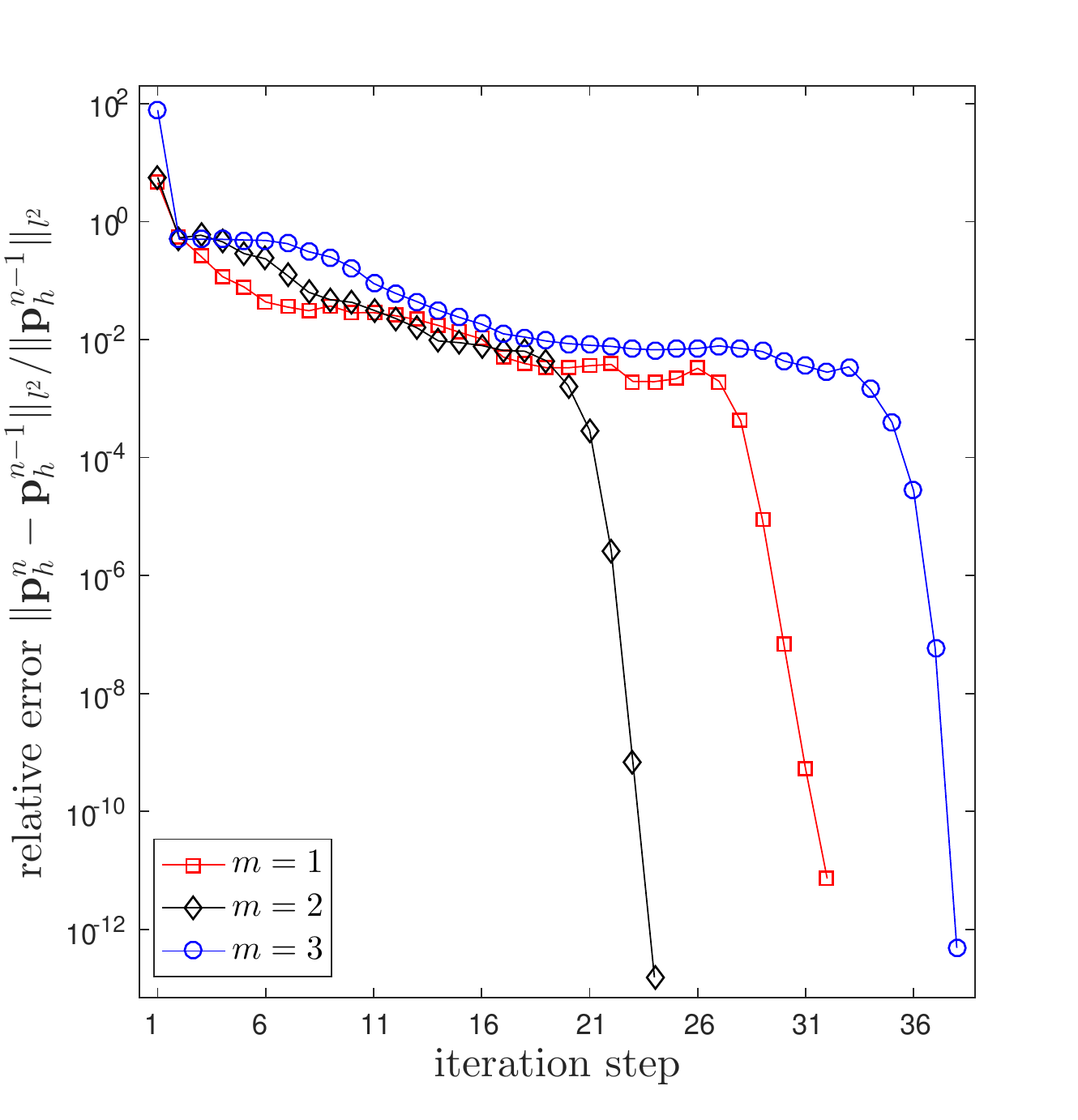}
  \includegraphics[width=0.4\textwidth]{./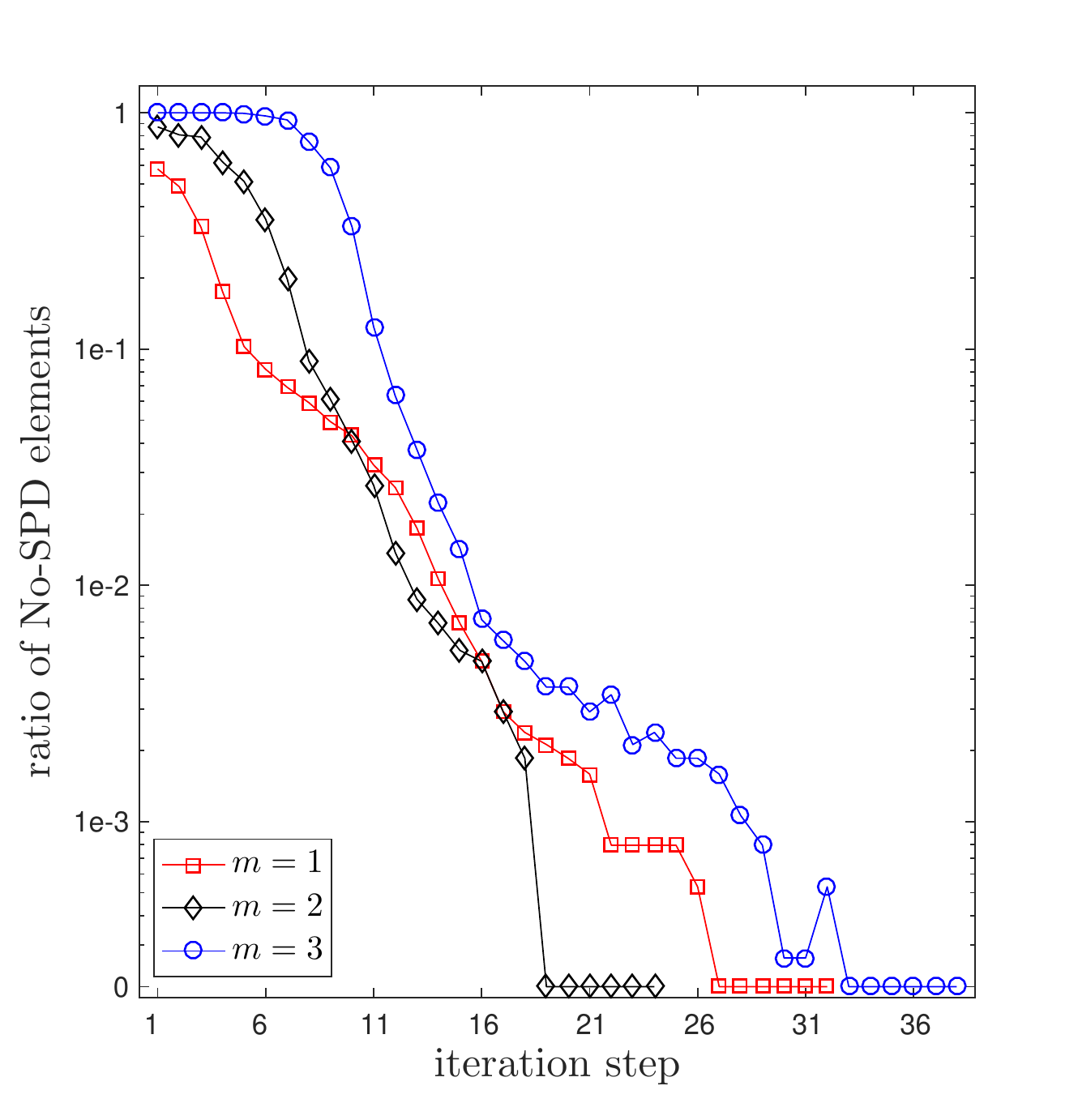}
  \caption{Ratio of Non-Convex elements to all elements (left) /
  convergence history of relative error (right). }
  \label{fig:ex4his}
\end{figure}

\begin{figure}
  \centering
  \includegraphics[width=0.28\textwidth]{./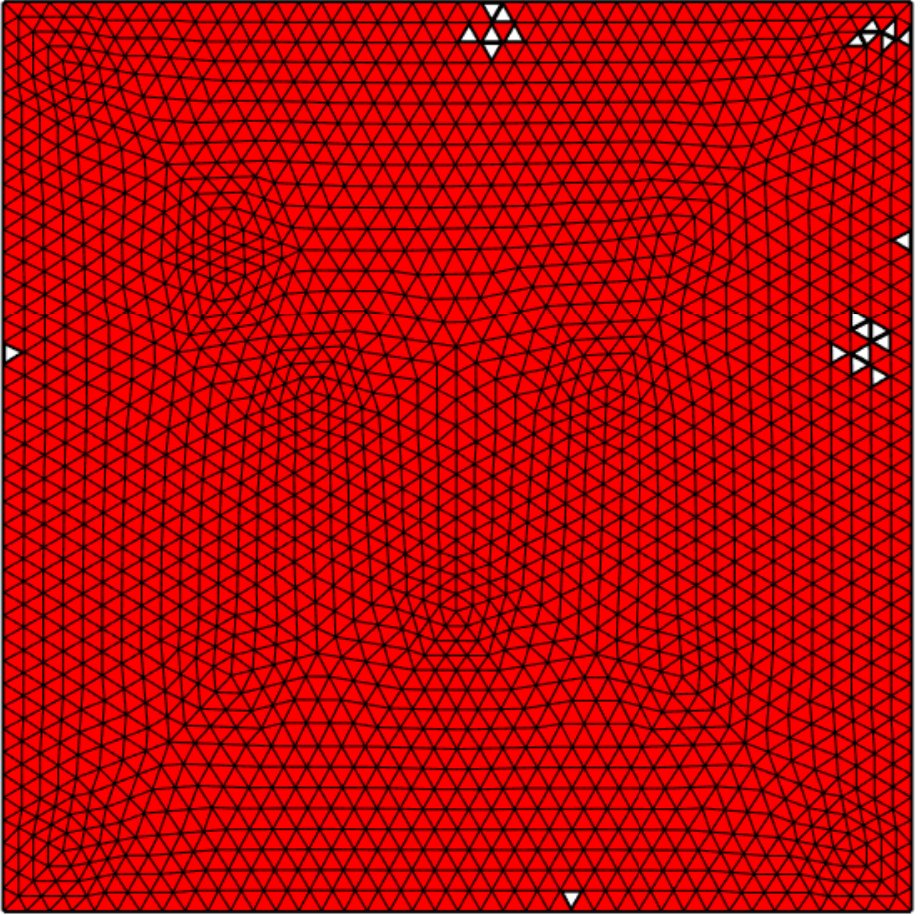}
  \hspace{10pt}
  \includegraphics[width=0.28\textwidth]{./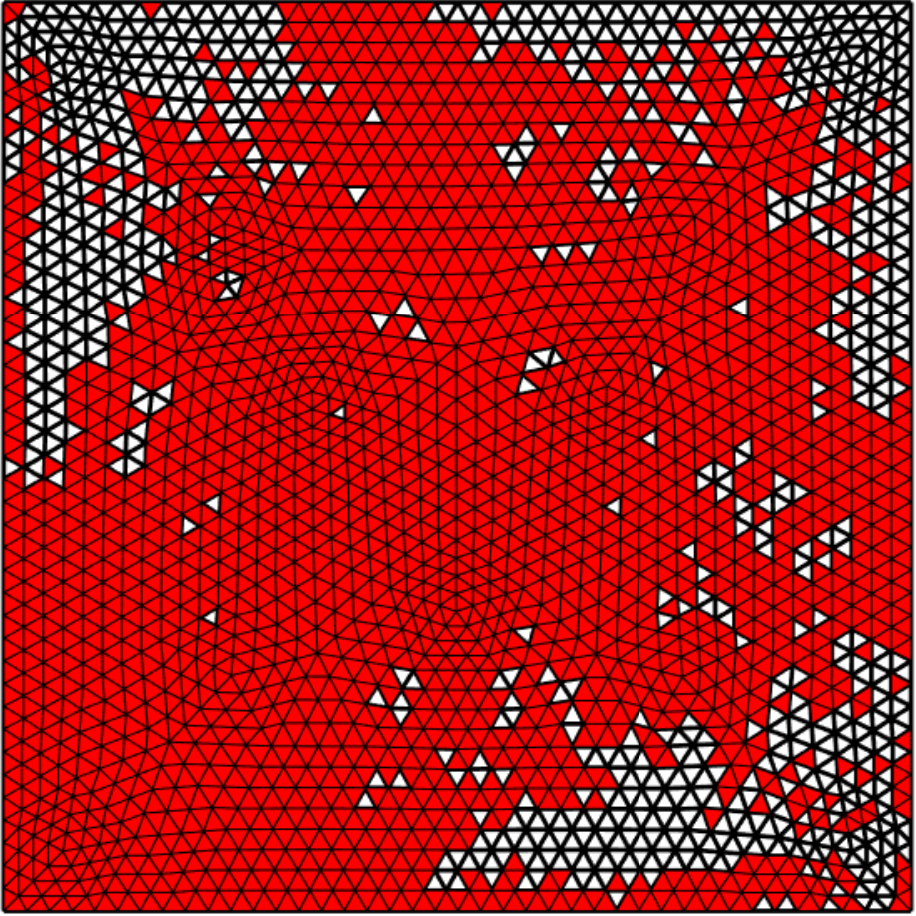}
  \hspace{10pt}
  \includegraphics[width=0.28\textwidth]{./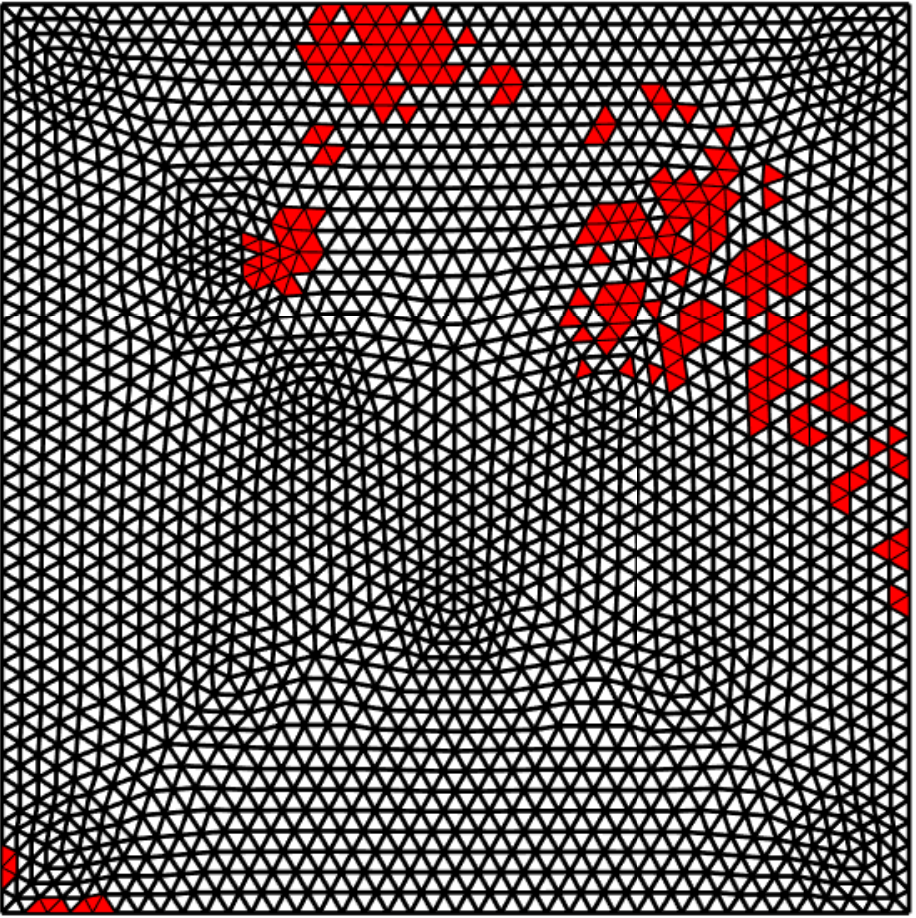}
  \caption{Non-Convex elements (red elements) in partition at step $5$
  (left) / at step $8$ (middle) / at step $12$ (right).}
  \label{fig:ex4spdhistory}
\end{figure}

\subsection{3D Examples}
In this section, we present the three-dimensional numerical
examples. In three dimensions, the computational domain $\Omega$ is
taken as the unit cube $\Omega = (0, 1)^3$ and the penalty parameter
$\eta$ is selected to be $60$. We adopt a series of tetrahedra meshes
with mesh size $h = 1/4$, $h = 1/8$, $h = 1/16$ and $h = 1/32$, see
Fig~\ref{fig:partition} and the values of $\# S(K)$ for different $m$
are given in Tab.~\ref{tab:numSK}. We use the approximation spaces
$\bmr{U}_h^m \times \wt{V}_h^m$ with $1 \leq m \leq 3$ to solve the
\MA equation. The stop criterion in nonlinear iteration is chosen to
be 
\begin{displaymath} 
  \frac{ \| \bm{p}_h^{n} - \bm{p}_h^{n - 1} \|_{l^2} }{
  \| \bm{p}_h^{n - 1} \|_{l^2} } < 10^{-8}. 
\end{displaymath}

\paragraph{\textbf{Example 5:}} In this test, we consider a
three-dimensional Monge-Amp\`ere equation. The exact solution is
defined as
\begin{displaymath}
  u(x, y, z) = -\sqrt{R^2 - \left( x^2 + y^2 + z^2 \right)},
\end{displaymath}
with $R^2 = 4$. The data functions $f$ and $g$ are given by 
\begin{displaymath}
  f(x, y, z) = \frac{R^2}{(R^2 - r^2)^{5/2}}, \quad g(x, y, z) =
  -\sqrt{R^2 - \left( x^2 + y^2 + z^2 \right)}.
\end{displaymath}
We select the function 
\begin{displaymath}
  u^0(x, y, z) = x^2 + y^2 + z^2
\end{displaymath}
to be the initial guess to start the nonlinear iteration. The
numerical errors and the number of the nonlinear iteration are
reported in Tab.~\ref{tab:example5}. It is observed that our method
retains the fast convergence in the nonlinear iteration and the
optimal convergence rates under the energy norms for the
three-dimensional problem. For the error under $L^2$ norm, the
odd/even situation is also been detected. Fig.~\ref{fig:ex5his}
shows the history of the relative error ${ \| \bm{p}_h^{n} -
\bm{p}_h^{n - 1} \|_{l^2} } / { \| \bm{p}_h^{n - 1} \|_{l^2} }$ for
all accuracy $1 \leq m \leq 3$ and the performance is consistent with
the results of the two-dimensional cases. In Fig.~\ref{fig:ex5slice},
we plot two slices of the computed solution with the accuracy $m=3$ at
the mesh level $h=1/32$. 

\begin{table}
  \renewcommand{\arraystretch}{1.3}
  \centering
  \scalebox{0.88}{
    \begin{tabular}{c|r|c|c|c|c|c|c|c|c|c}
    \hline\hline
    $m$ & $h~~~$ & $ \pnorm{\bm{p} - \bm{p}_h} $ & order & $ \| \bm{p} -
    \bm{p}_h\|_{L^2(\Omega)}$  & order & $ \unorm{u - u_h} $
    & order & $ \| u - u_h\|_{L^2(\Omega)}$ & order & \# iter
    \\
    \hline 
    \multirow{4}{0.6cm}{1} & $1/4$ & 
    9.385e-2 & -    & 1.046e-2 & - & 
    7.507e-2 & -    & 3.357e-3 & - &    8 \\
    \cline{2-11}
    & $1/8$ &
    5.448e-2 & 0.78 & 6.032e-3 & 0.79 & 
    3.878e-2 & 0.95 & 1.772e-3 & 0.92 & 8 \\
    \cline{2-11}
    & $1/16$ &
    2.936e-2 & 0.89 & 2.125e-3 & 1.51 & 
    1.969e-2 & 0.98 & 6.567e-4 & 1.43 & 9 \\
    \cline{2-11}
    & $1/32$ &
    1.460e-2 & 1.01 & 6.023e-4 & 1.82 & 
    9.830e-3 & 1.00 & 1.803e-4 & 1.86 & 9 \\
    \hline
    \multirow{4}{0.6cm}{2} & $1/4$ & 
    2.016e-2 & -    & 2.353e-3 & - & 
    1.273e-3 & -    & 1.656e-4 & - &    8 \\
    \cline{2-11}
    & $1/8$ &
    5.859e-3 & 1.63 & 6.967e-4 & 1.76 & 
    4.109e-4 & 1.78 & 6.782e-5 & 1.29 & 9 \\
    \cline{2-11}
    & $1/16$ &
    1.548e-3 & 1.95 & 1.843e-4 & 1.92 & 
    1.0663-4 & 1.92 & 1.779e-5 & 1.93 & 9 \\
    \cline{2-11}
    & $1/32$ &
    2.679e-5 & 1.99 & 4.746e-5 & 1.96 & 
    2.969e-4 & 2.00 & 4.391e-6 & 2.01 & 9 \\
    \hline
    \multirow{4}{0.6cm}{3} & $1/4$ & 
    8.613e-3 & -    & 5.325e-4 & - & 
    5.989e-4 & -    & 2.989e-5 & - &    9 \\
    \cline{2-11}
    & $1/8$ &
    1.220e-3 & 2.82 & 4.098e-5 & 3.86 & 
    8.323e-5 & 2.85 & 2.009e-6 & 3.89 & 8 \\
    \cline{2-11}
    & $1/16$ &
    1.448e-4 & 3.08 & 2.558e-6 & 4.00 & 
    1.023e-5 & 3.02 & 1.232e-7 & 4.02 & 8 \\
    \cline{2-11}
    & $1/32$ &
    1.812e-5 & 3.02 & 1.578e-8 & 4.02 & 
    1.290e-6 & 2.99 & 7.753e-9 & 3.99 & 8 \\
    \hline\hline
  \end{tabular}}
  \caption{Convergence of the Example 5.}
  \label{tab:example5}
\end{table}

\begin{figure}
  \centering
  \includegraphics[width=0.3\textwidth]{./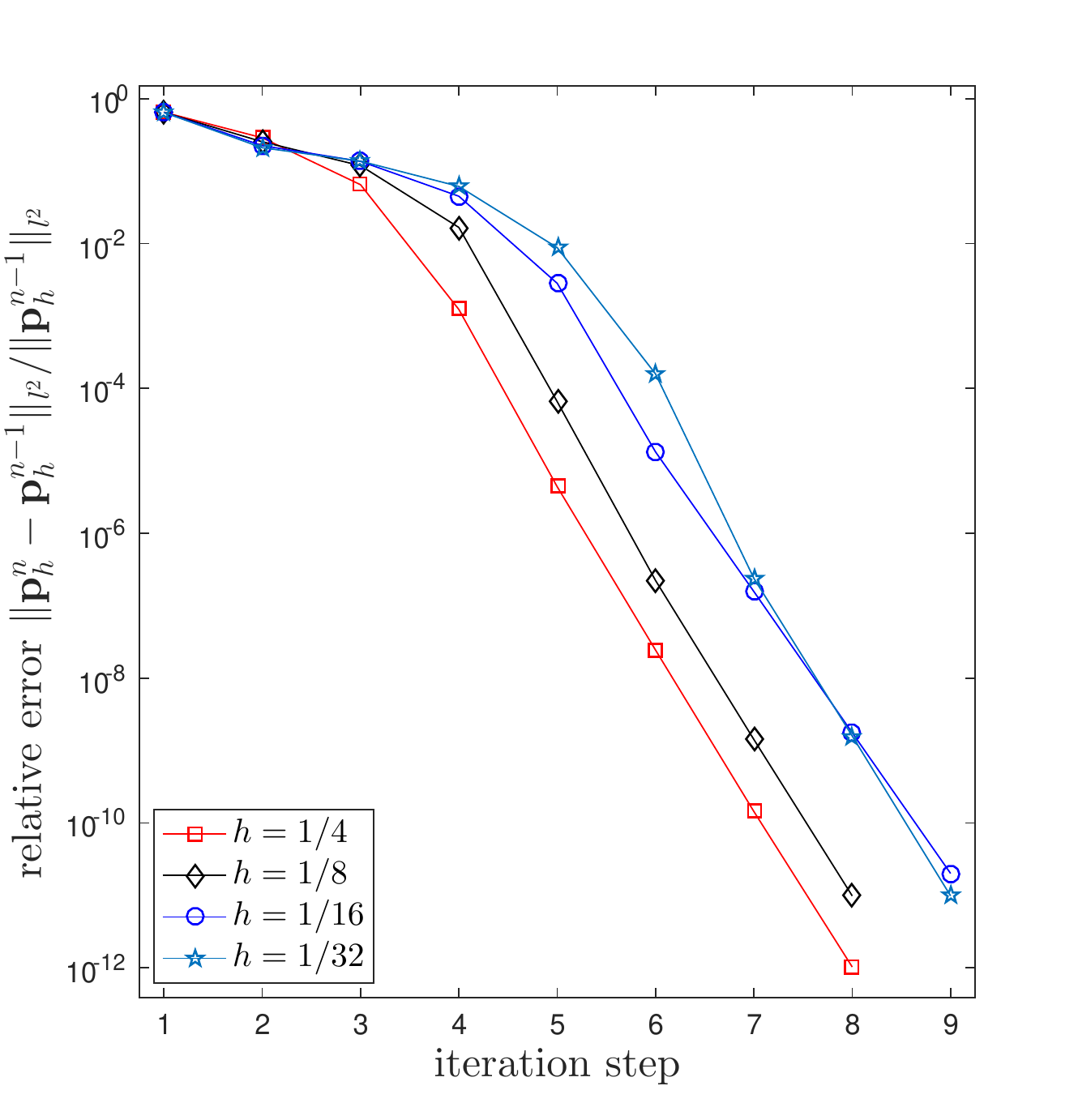}
  \hspace{5pt}
  \includegraphics[width=0.3\textwidth]{./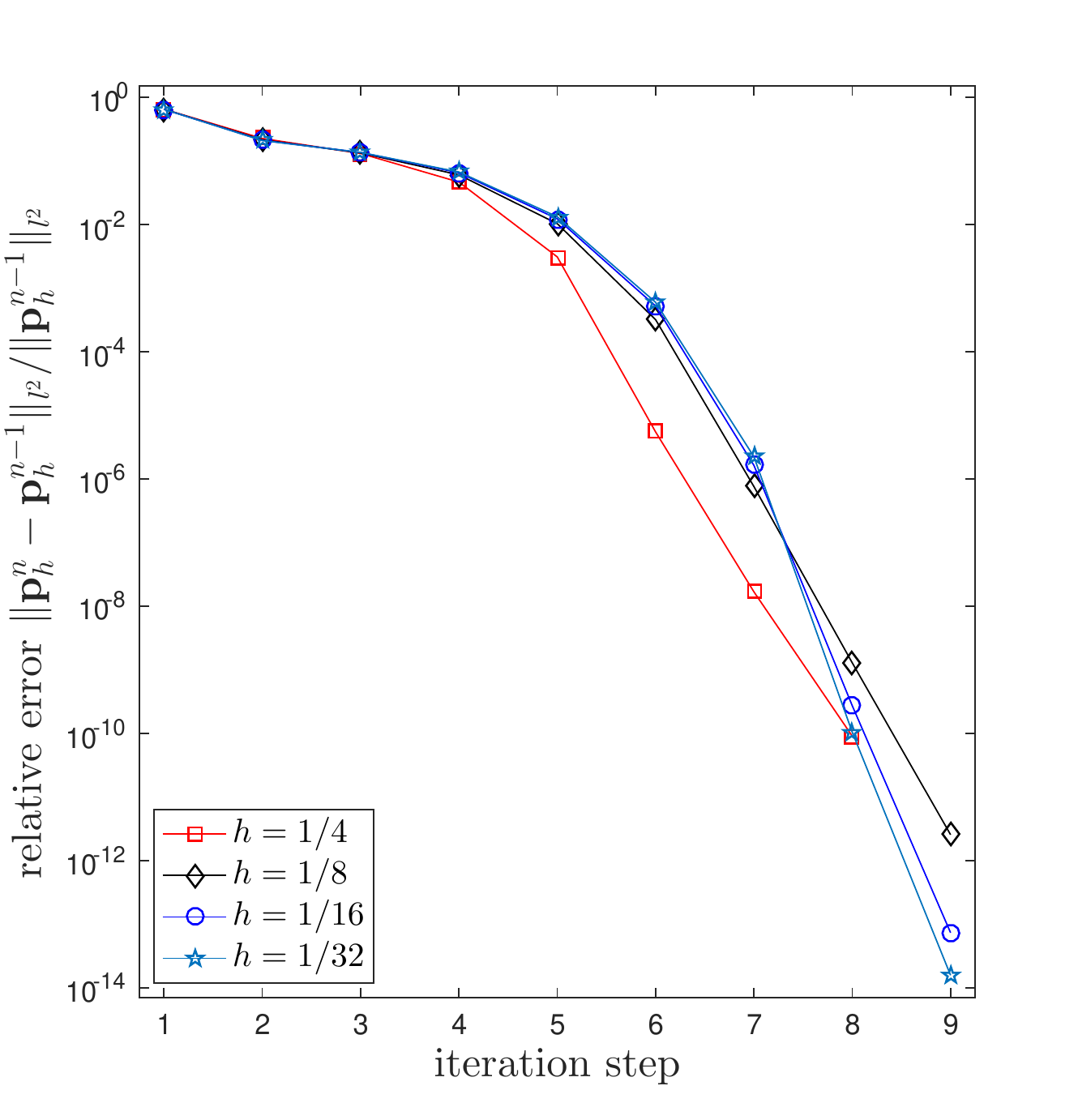}
  \hspace{5pt}
  \includegraphics[width=0.3\textwidth]{./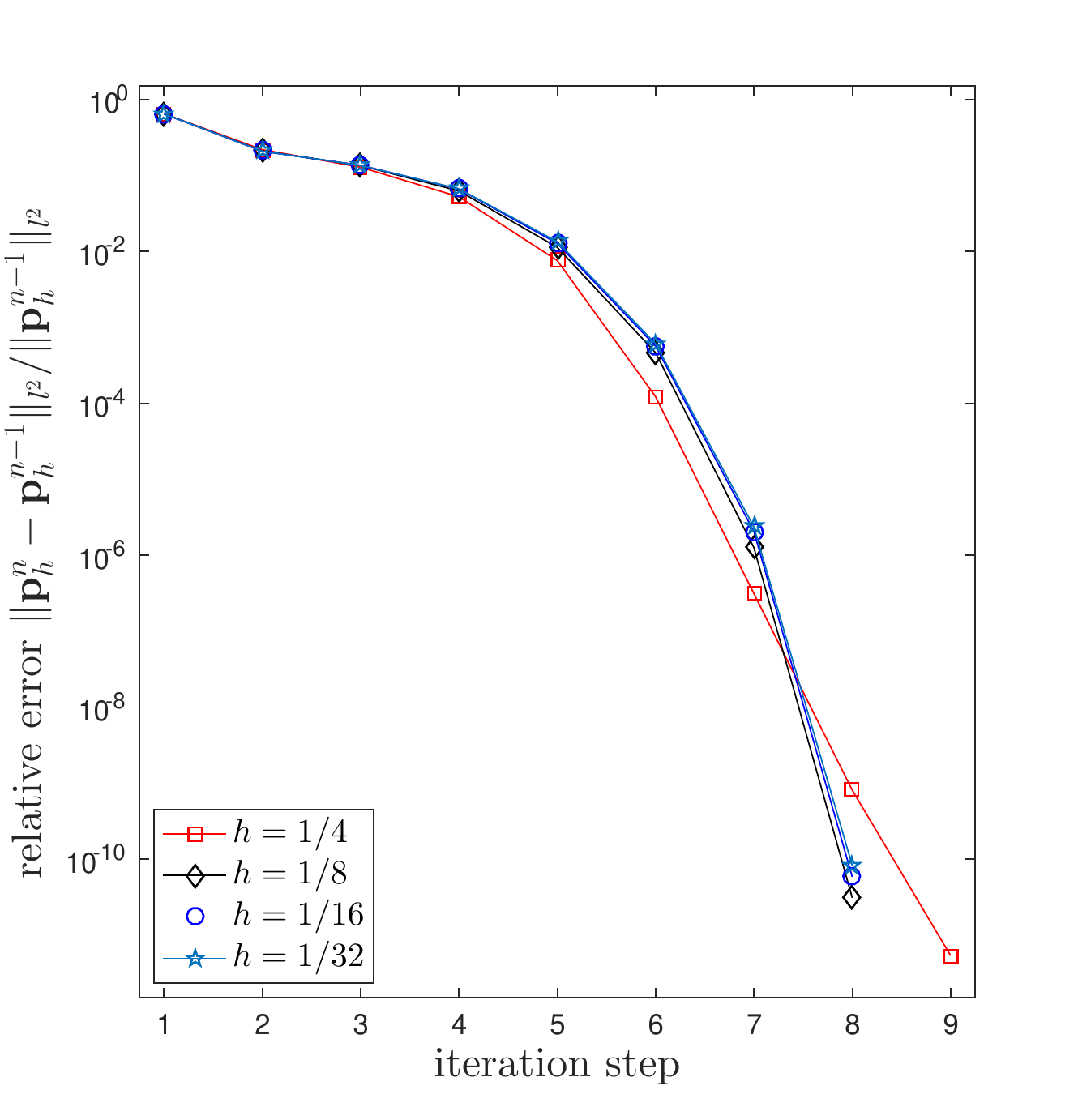}
  \caption{The convergence history of Example 5 for $m=1$ (left) /
  $m=2$ (middle) / $m=3$ (right).}
  \label{fig:ex5his}
\end{figure}

\begin{figure}
  \centering
  \includegraphics[width=0.4\textwidth]{./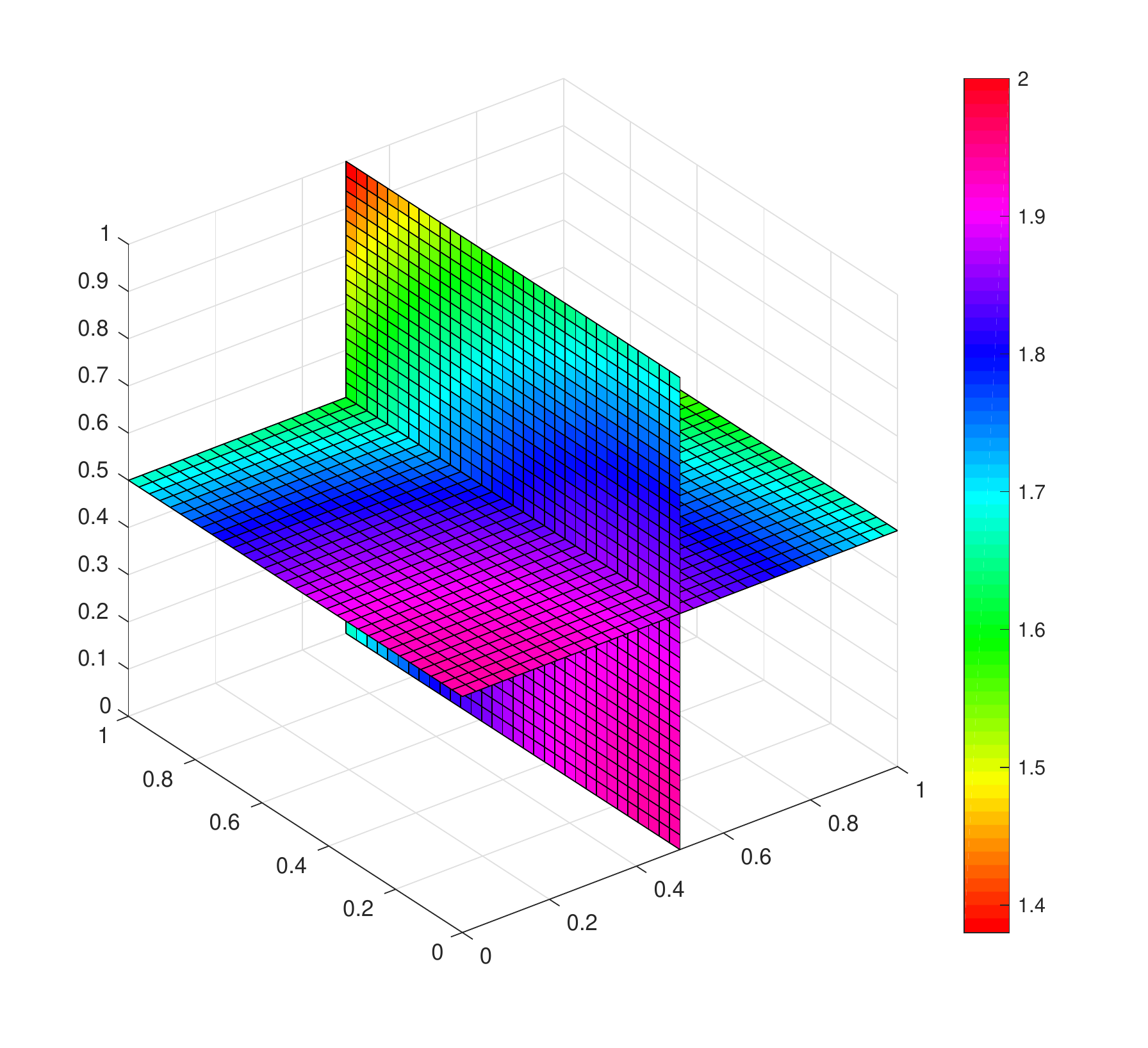}
  \includegraphics[width=0.4\textwidth]{./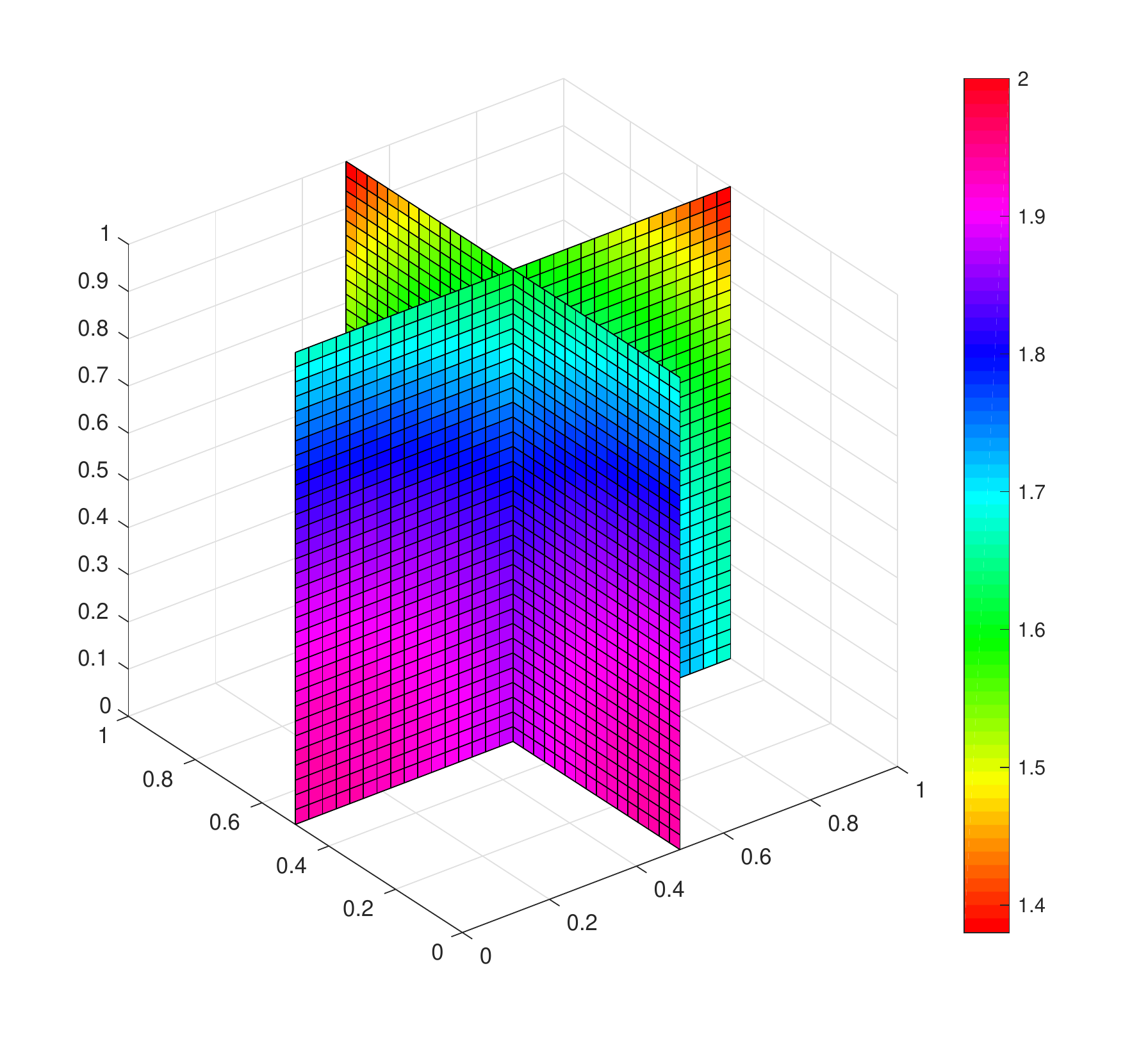}
  \caption{Two slices of the numerical solution for Example 5 on the
  mesh level $h =1/32$.}
  \label{fig:ex5slice}
\end{figure}

\paragraph{\textbf{Example 6:}} Here we test another
three-dimensional Monge-Amp\`ere equation. We solve for the data
function 
\begin{displaymath}
  \begin{aligned}
    f(x, y, z) &= (1 + x^2 + y^2 + z^2) \exp (x^2 + y^2 + z^2), \text{
     in } \Omega, \\
    g(x, y, z) &= \exp \left( \frac{1}{2} \left( x^2 + y^2 + z^2
    \right) \right), \text{ on } \partial \Omega,
  \end{aligned}
\end{displaymath}
which defines the analytical solution $u(x, y, z) = \exp\left(
\frac{1}{2} (x^2 + y^2 + z^2) \right)$. We adopt the function 
\begin{displaymath}
  u^0 = (0.5x^2 + y^2 + 5z^2 + 1)^{3/2}
\end{displaymath}
to be the initial guess.  The numerical results are summarized in
Tab.~\ref{tab:example6} and the optimal convergence rates under the
energy norms are observed. Fig.~\ref{fig:ex6his} plots the error ${ \|
\bm{p}_h^{n} - \bm{p}_h^{n - 1} \|_{l^2} } / { \| \bm{p}_h^{n - 1}
\|_{l^2} }$ for all nonlinear steps, and Fig.~\ref{fig:ex6slice}
depicts two slices of the numerical solution at the mesh level $h =
1/32$.  In this test, we show the robustness of the proposed method in
three dimensions by using the initial values that are in a wider
range. We take $\alpha = 0.01$, $0.1$, $1$, $10$, $100$ and let
$\alpha u^0$ be the initial guess to solve the \MA equation on the
mesh with $h = 1/16$. The number of nonlinear iterations for $1 \leq m
\leq 3$ is listed in Tab.~\ref{tab:ex6alpha}. We note that many
numerical methods for solving the two-dimensional \MA equation cannot
immediately generalize to higher dimensions
\cite{Benamou2010numerical}, and our method still demonstrates a fast
convergence speed and a very wide range of the convergence to the
nonlinear system in three dimensions. 

\begin{table}
  \renewcommand{\arraystretch}{1.3}
  \centering
  \scalebox{0.88}{
    \begin{tabular}{c|r|c|c|c|c|c|c|c|c|c}
    \hline\hline
    $m$ & $h~~~$ & $ \pnorm{\bm{p} - \bm{p}_h} $ & order & $ \| \bm{p} -
    \bm{p}_h\|_{L^2(\Omega)}$  & order & $ \unorm{u - u_h} $
    & order & $ \| u - u_h\|_{L^2(\Omega)}$ & order & \# iter
    \\
    \hline 
    \multirow{4}{0.6cm}{1} & $1/4$ & 
    5.893e-1 & -    & 3.873e-2 & - & 
    2.603e-1 & -    & 1.192e-2 & - &    13 \\
    \cline{2-11}
    & $1/8$ &
    2.785e-1 & 1.17 & 1.198e-2 & 1.69 & 
    1.358e-1 & 0.93 & 3.393e-3 & 1.81 & 10 \\
    \cline{2-11}
    & $1/16$ &
    1.350e-1 & 1.05 & 3.392e-3 & 1.82 & 
    6.926e-2 & 0.97 & 9.023e-4 & 1.91 & 9 \\
    \cline{2-11}
    & $1/32$ &
    6.653e-2 & 1.02 & 9.073e-4 & 1.90 & 
    3.505e-2 & 0.98 & 2.235e-4 & 2.01 & 10 \\
    \hline
    \multirow{4}{0.6cm}{2} & $1/4$ & 
    1.128e-1 & -    & 1.449e-2 & - & 
    2.141e-2 & -    & 2.338e-3 & - &    12 \\
    \cline{2-11}
    & $1/8$ &
    2.853e-2 & 1.98 & 3.250e-3 & 2.16 & 
    5.483e-3 & 1.97 & 5.382e-4 & 2.11 & 8 \\
    \cline{2-11}
    & $1/16$ &
    7.123e-3 & 2.00 & 7.902e-4 & 2.00 & 
    1.409e-4 & 1.96 & 1.323e-4 & 2.02 & 8 \\
    \cline{2-11}
    & $1/32$ &
    1.773e-3 & 2.01 & 1.973e-4 & 2.00 & 
    3.313e-5 & 2.00 & 3.123e-5 & 2.00 & 11 \\
    \hline
    \multirow{4}{0.6cm}{3} & $1/4$ & 
    1.588e-2 & -    & 8.769e-4 & - & 
    1.210e-3 & -    & 4.865e-5 & - &    8 \\
    \cline{2-11}
    & $1/8$ &
    1.923e-3 & 3.03 & 5.178e-5 & 4.08 & 
    1.377e-4 & 3.13 & 2.836e-6 & 4.10 & 8 \\
    \cline{2-11}
    & $1/16$ &
    2.293e-4 & 3.06 & 3.468e-6 & 3.91 & 
    1.723e-5 & 2.99 & 1.748e-7 & 4.02 & 9 \\
    \cline{2-11}
    & $1/32$ &
    2.872e-5 & 3.01 & 2.232e-7 & 3.96 & 
    2.153e-6 & 3.00 & 1.085e-8 & 4.01 & 10 \\
    \hline\hline
  \end{tabular}}
  \caption{Convergence of the Example 6.}
  \label{tab:example6}
\end{table}

\begin{figure}
  \centering
  \includegraphics[width=0.3\textwidth]{./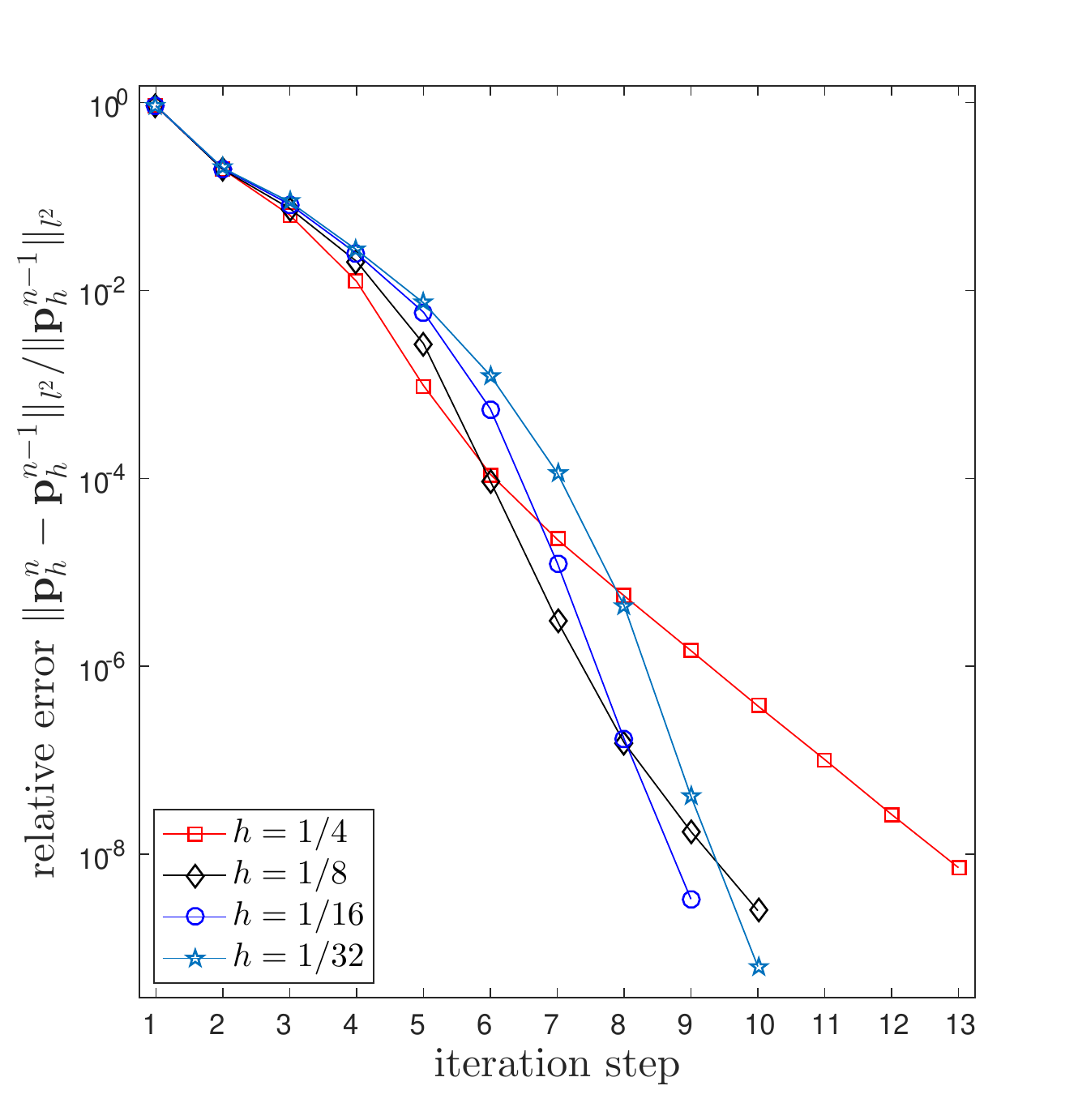}
  \hspace{5pt}
  \includegraphics[width=0.3\textwidth]{./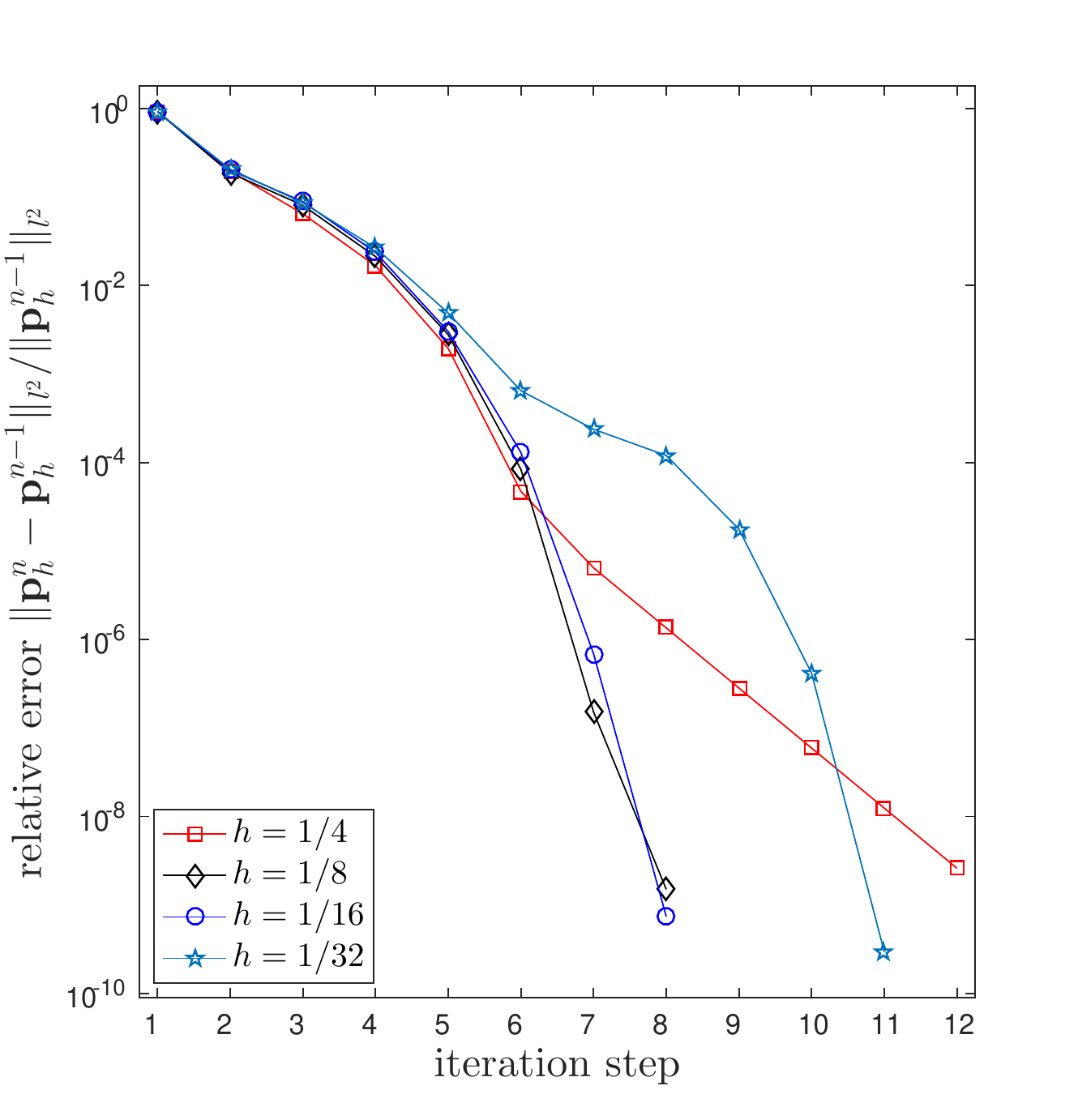}
  \hspace{5pt}
  \includegraphics[width=0.3\textwidth]{./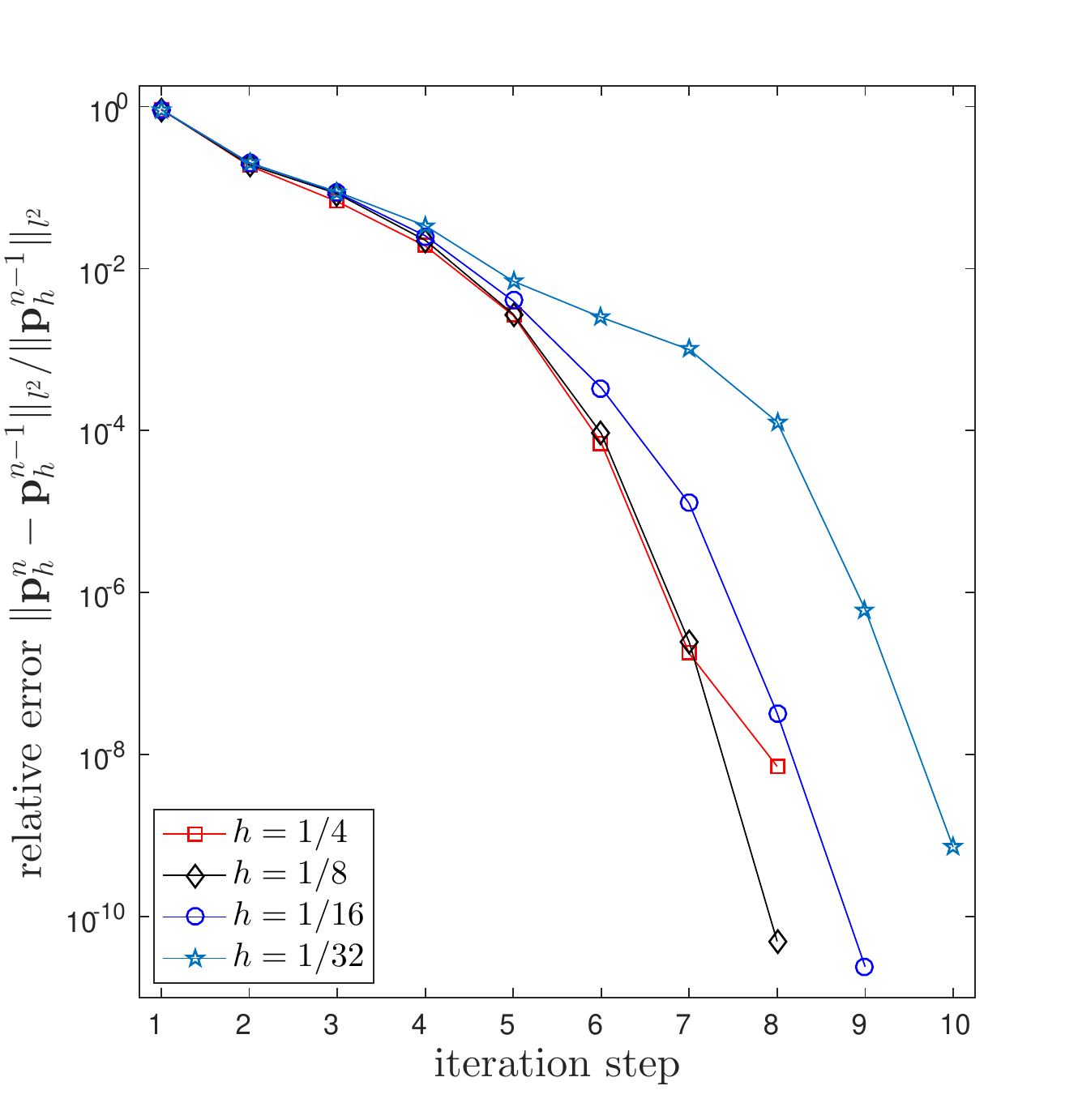}
  \caption{The convergence history of Example 6 for $m=1$ (left) /
  $m=2$ (middle) / $m=3$ (right).}
  \label{fig:ex6his}
\end{figure}

\begin{figure}
  \centering
  \includegraphics[width=0.4\textwidth]{./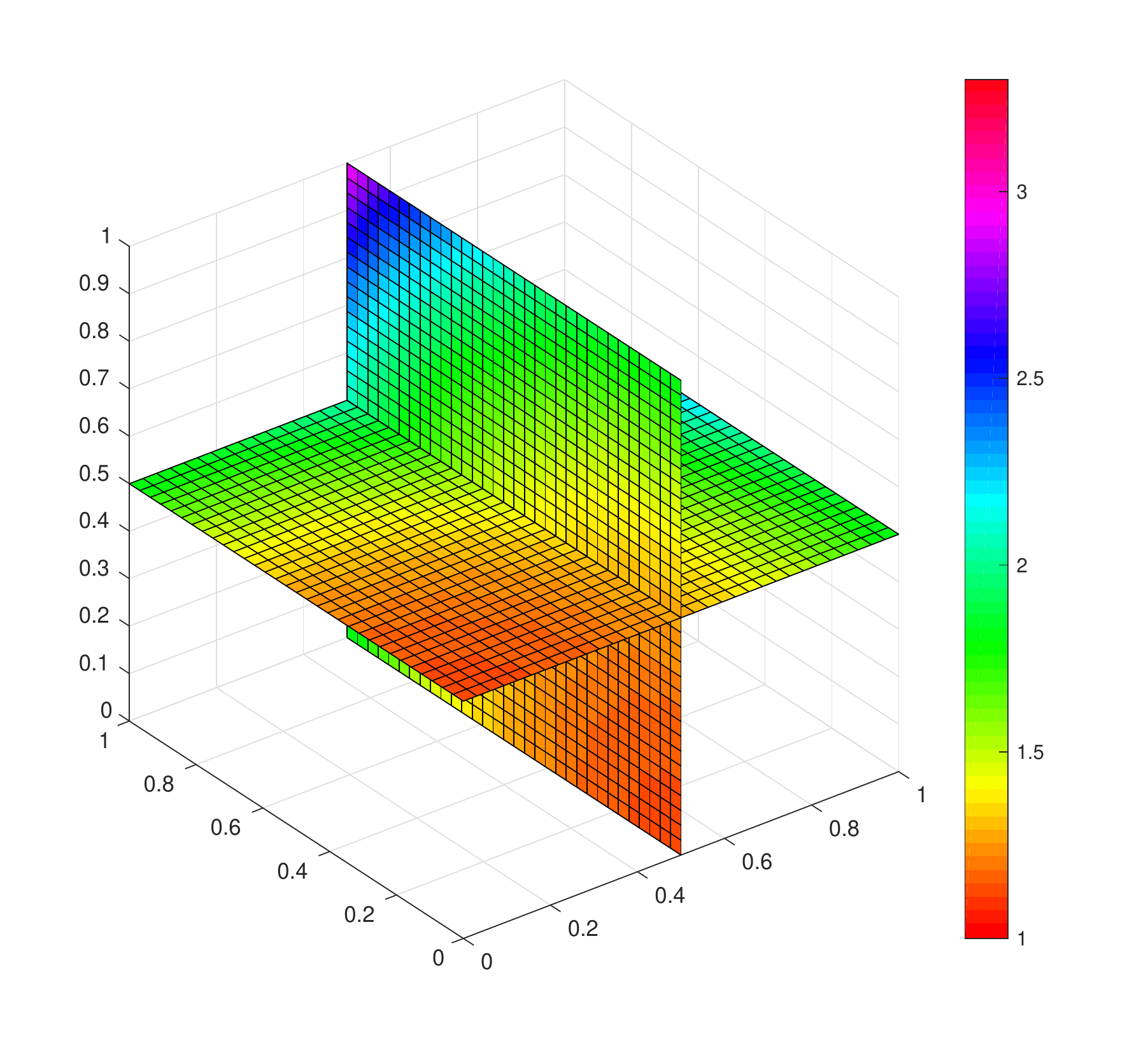}
  \includegraphics[width=0.4\textwidth]{./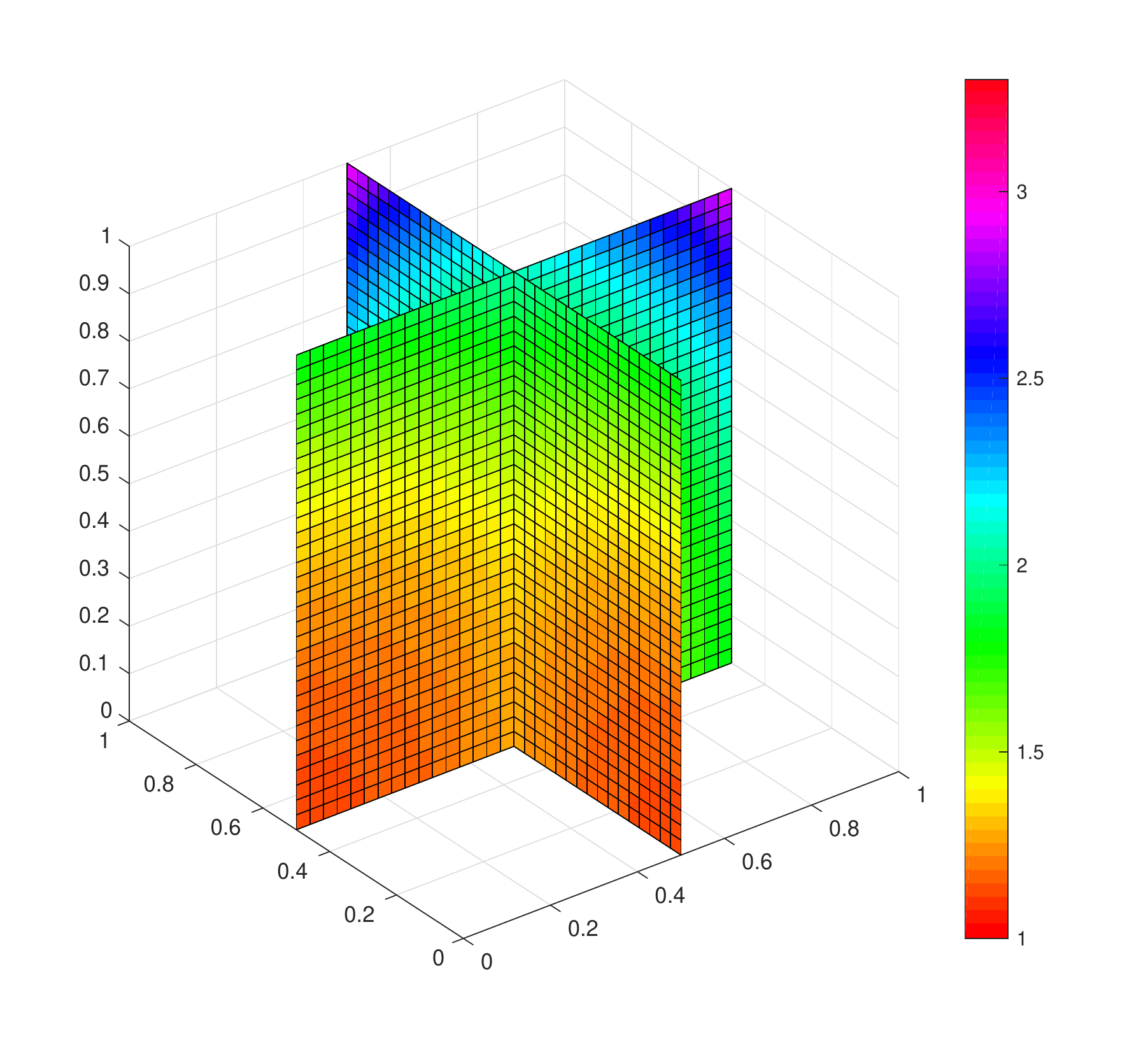}
  \caption{Two slices of the numerical solution for Example 6 on the
  mesh level $h =1/32$.}
  \label{fig:ex6slice}
\end{figure}

\begin{table}
  \renewcommand{\arraystretch}{1.2}
  \centering
  \begin{tabular}{c|c|c|c|c|c|c}
    \hline\hline
    $\quad m \quad$ & $\alpha$ & $0.01$ & $0.1$ & $1$ & $10$ & $100$ \\
    \hline 
    \multirow{2}{0.5cm}{1} & \# iter & 8 & 10 & 9 & 21 & 32  \\
    \cline{2-7}
    & $\pnorm{\bm{p} - \bm{p}_h}$ & 1.350e-1 &  1.350e-1 & 1.350e-1
    &1.350e-1 &1.350e-1 \\
    \hline 
    \multirow{2}{0.5cm}{2} & \# iter & 20 & 11 & 8 & 23 & 23  \\
    \cline{2-7}
    & $\pnorm{\bm{p} - \bm{p}_h}$ & 7.123e-3 &  7.123e-3 & 7.123e-3
    &7.123e-3 & 7.123e-3 \\
    \hline 
    \multirow{2}{0.5cm}{3} & \# iter & 18 & 28 & 9 & 17 & 47  \\
    \cline{2-7}
    & $\pnorm{\bm{p} - \bm{p}_h}$ & 2.293e-4 &  2.293e-4 & 2.293e-4
    &2.293e-4 &2.293e-4 \\
    \hline\hline
  \end{tabular}
  \caption{Iteration steps and numerical errors for the initial values
  with different $\alpha$. }
  \label{tab:ex6alpha}
\end{table}

\section{Comparison between $\bmr{U}_h^m$ and $\bmr{S}_h^m$}
\label{sec:comparison}
In this section, we present some numerical evidences to show that the
robustness is due to the space $\bmr{U}_h^m$ by making a
comparison with the standard piecewise irrotational polynomial space
$\bmr{S}_h^m$. As we stated in Remark \ref{re:space}, the nonlinear
system can also be solved with the space $\bmr{S}_h^m$. There are two
numerical evidences that strongly encourage us to adopt the
reconstructed space $\bmr{U}_h^m$ to solve the \MA problem.

The first reason is the efficiency of the approximation. The number of
the degrees of freedom of a specific discretized system can serve as
a proper indicator for the efficiency of the numerical scheme
\cite{hughes2000comparison}, and we have shown that for some classical
linear problems the reconstructed space demonstrates a better
numerical efficiency \cite{li2019eigenvalue, li2017discontinuous,
li2019least}. Here we compare the numerical efficiency between the
spaces $\bmr{U}_h^m$ and $\bmr{S}_h^m$ by adopting both approximation
spaces to solve the Example 1. The initial guess is taken as the
solution of the related problem
\begin{displaymath}
  \Delta u = 2 \sqrt{f} \text{ in } \Omega, \quad u = g \text{ on }
  \partial \Omega.
\end{displaymath}
In Fig.~\ref{fig:effcompare}, we plot the errors $\pnorm{\bm{p} -
\bm{p}_h}$ for both two approximation spaces against the number of
degrees of freedom with $1 \leq m \leq 3$. For all accuracy $m$, fewer
degrees of freedom are used with the reconstructed space $\bmr{U}_h^m$
to achieve the same error and the advantage of the efficiency is more
remarkable as $m$ increases. The great numerical efficiency is the
first reason that the reconstructed space $\bmr{U}_h^m$ is preferred.

\begin{figure}
  \centering
  \includegraphics[width=0.3\textwidth]{./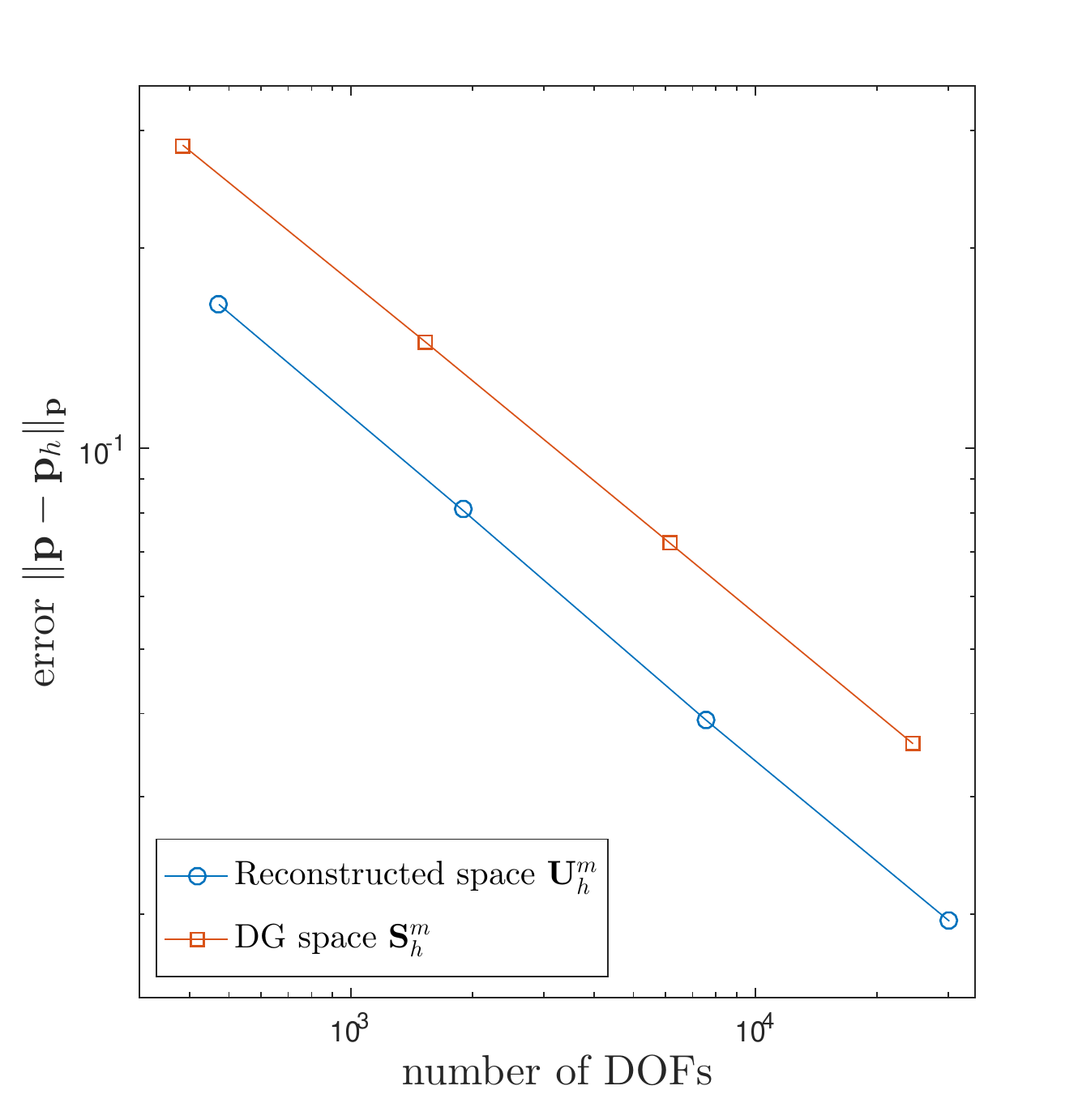}
  \hspace{5pt}
  \includegraphics[width=0.3\textwidth]{./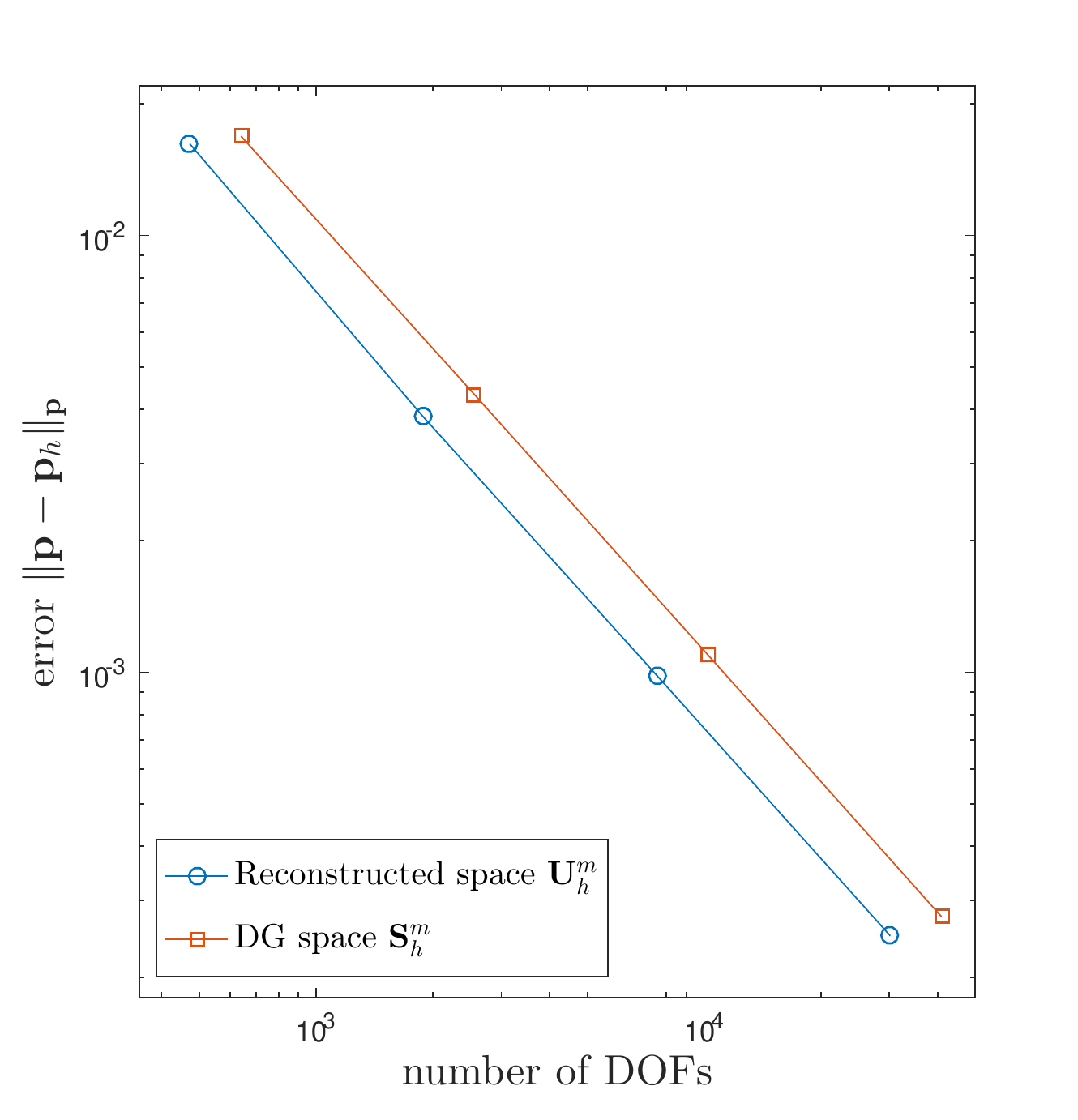}
  \hspace{5pt}
  \includegraphics[width=0.3\textwidth]{./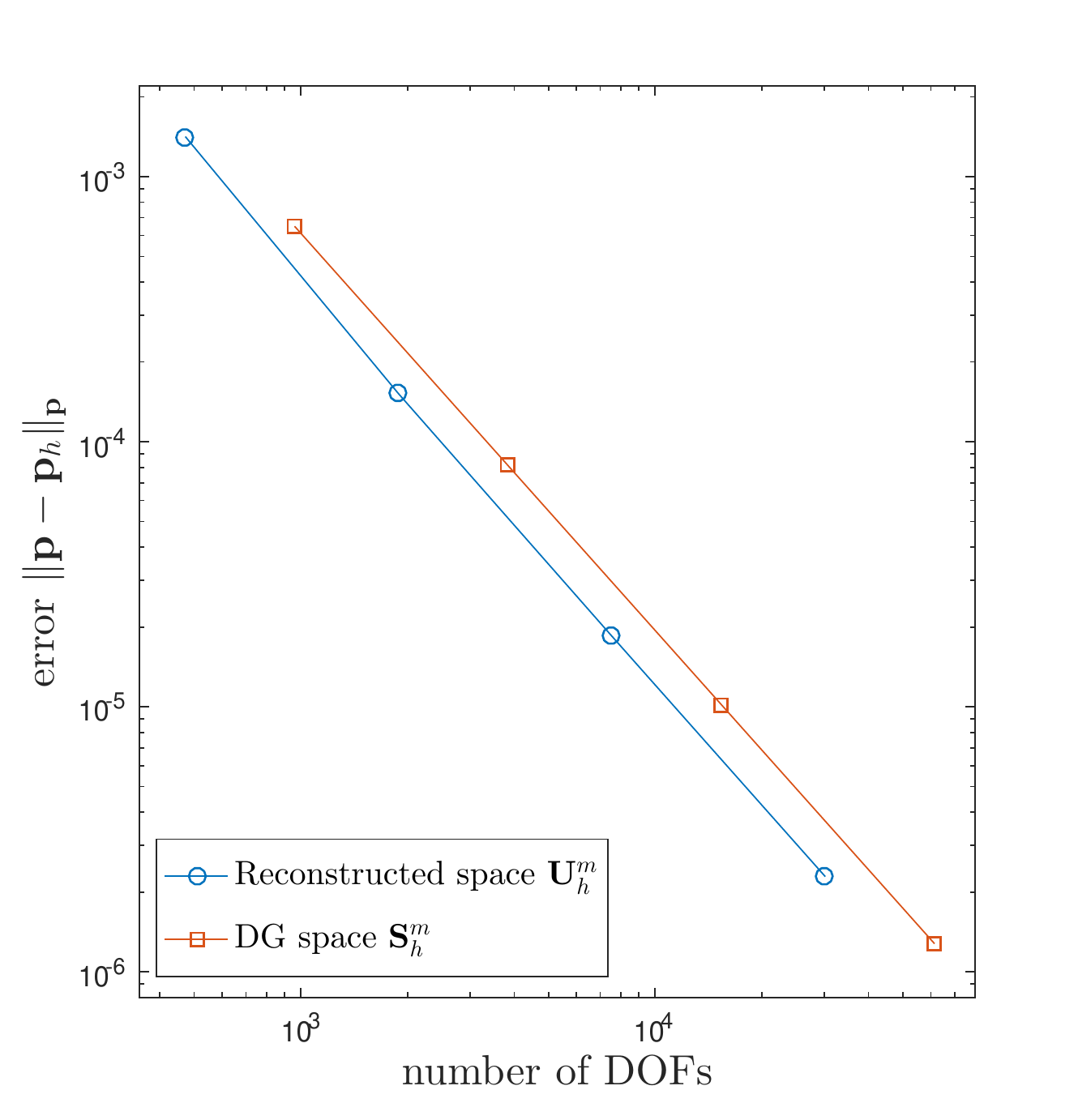}
  \caption{The error $\pnorm{\bm{p} - \bm{p}_h}$ against the number of
  DOFs for $m=1$ (left) / $m=2$ (middle) / $m=3$ (right).}
  \label{fig:effcompare}
\end{figure}

More importantly, the reconstructed space $\bmr{U}_h^m$ demonstrates a
better robustness in handling the nonlinearity. We solve the Example 1
with the initial guess \eqref{eq:test2initial} by both two
approximation spaces. The penalty parameter $\eta$ is still taken as
$\eta = 20$ for $1 \leq m \leq 3$ and the number of maximum iterations
is set as $100$. We list the number of the nonlinear iterations in
Tab.~\ref{tab:compsteps} and clearly the method with the space
$\bmr{U}_h^m$ has a much better numerical performance than the method
using the space $\bmr{S}_h^m$. Adopting the reconstructed space
$\bmr{U}_h^m$ gives a much faster convergence speed under the same
numerical setting. As a numerical observation, we find that the method
with the space $\bmr{S}_h^m$ is very sensitive to the penalty
parameter. For the initial guess that is far from the exact solution,
the penalty $\eta$ shall be large enough to guarantee the
convergence. We scale the initial guess \eqref{eq:test2initial} by
multiplying a coefficient $\alpha$ to show this phenomenon. We solve
the problem by the initial value $\alpha u^0(x, y)$
($\alpha = 1, 10, 100$) and the penalty parameter $\eta$ is taken as
$\eta = 20, 60, 100, 300$. The number of nonlinear iterations for
different initial values are reported in Tab.~\ref{tab:comppara1} -
Tab.~\ref{tab:comppara100} (In these tables, $\infty$ means that there
are no indications to convergence at all after 100 Newton iterative
steps). It can be observed that the case $\alpha = 10$ provides a
better initial guess than the other two cases, which is consistent
with the results in Fig.~\ref{fig:ex2alphahistory}.  In addition, we
can see that the method with the space $\bmr{S}_h^m$ requires a very
large penalty especially for the high-order accuracy.  For the
reconstructed space $\bmr{U}_h^m$, the number of iterations seems
essentially unrelated to the initial value and the penalty parameter
$\eta$. Here we give a numerical explanation for the reason why the
space $\bmr{S}_h^m$ may not work.  We solve the problem with the
initial guess $100 u^0(x, y)$ at the mesh level $h = 1/80$, and in
Fig.~\ref{fig:DGnospd}, we monitor the number of Non-Convex elements
at each iteration with different parameter $\eta = 20$, $60$, $100$.
It can be seen that the method cannot converge if there are always
Non-Convex elements in partition. For the space $\bmr{S}_h^m$, it
requires a very large penalty parameter to guarantee the piecewise
convexity of the numerical solution. In Fig.~\ref{fig:DGnospd}, it is
clear that there are less Non-Convex elements with larger $\eta$. We
also observe that for the case $\eta = 100$ and $m=1$, there are five
iterations to the convergence after the numerical solution is
piecewise convex. On the other hand, for the reconstructed space
$\bmr{U}_h^m$, the Tab.~\ref{tab:comppara1} -
Tab.~\ref{tab:comppara100} indicate that the number of iterations is
irrelevant to the penalty parameter. The space $\bmr{U}_h^m$ seems to
have a natural capability to capture the convexity of the solution.
As we illustrated in Example 3 and Example 4, a few nonlinear
iterations are involved in the first process to eliminate the
Non-Convex elements, and in the second process the convergence is very
fast.  The great robustness is another reason that strongly encourages
us to adopt the reconstructed space to solve the Monge-Amp\`ere
equation, and we leave the underlying reason to the future work.

\begin{table}
  \renewcommand{\arraystretch}{1.3}
  \centering
  \begin{tabular}{c|c|c|c|c}
    \hline\hline 
    \multicolumn{2}{c|}{$m$} & $\qquad 1 \qquad$ & $\qquad 2 \qquad$ & $\qquad 3 \qquad$ \\
    \hline
    \multirow{2}{1cm}{$\bmr{U}_h^m$} & $h=1/40$ & 10  & 12 & 11 \\
    \cline{2-5}
    & $h=1/80$ & 11 & 9  & 14 \\
    \hline
    \multirow{2}{1cm}{$\bmr{S}_h^m$} & $h=1/40$ 
    & 20  & 29 & $\infty$ \\
    \cline{2-5}
    & $h=1/80$ 
    & $\infty$ & $\infty$ & $\infty$ \\
    \hline\hline
  \end{tabular}
  \caption{Iteration steps for approximation spaces $\bmr{U}_h^m$ and
  $\bmr{S}_h^m$.}
  \label{tab:compsteps}
\end{table}

\begin{table}
  \renewcommand{\arraystretch}{1.3}
  \centering 
  \begin{tabular}{c|c|c|c|c}
    \hline\hline
    $\alpha=1, \eta = $ & $\qquad  20 \qquad$ & $\qquad  60 \qquad$
    & $\qquad 100 \qquad$ & $\qquad 300 \qquad$ \\
    \hline
    $\bmr{S}_h^1$ & $\infty$ & 25 & 12  & 12 \\
    \hline
    $\bmr{U}_h^1$ &  11   &  9  &  10  & 9  \\
    \hline
    $\bmr{S}_h^2$ & $\infty$ & 32 & 21  & 20 \\
    \hline
    $\bmr{U}_h^2$ &  9   &  12  &  13  & 12  \\
    \hline
    $\bmr{S}_h^3$ & $\infty$ & $\infty$ & $\infty$ & 37 \\
    \hline
    $\bmr{U}_h^3$ &  14   &  14  &  14  & 12  \\
    \hline\hline
  \end{tabular}
  \caption{Iteration steps for different parameter $\eta$ with $\alpha
  = 1$.}
  \label{tab:comppara1}
\end{table}

\begin{table}
  \renewcommand{\arraystretch}{1.3}
  \centering 
  \begin{tabular}{c|c|c|c|c}
    \hline\hline
    $\alpha=10, \eta = $ & $\qquad  20 \qquad$ & $\qquad  60 \qquad$
    & $\qquad 100 \qquad$ & $\qquad 300 \qquad$ \\
    \hline
    $\bmr{S}_h^1$ & 11  & 11 & 11  & 11 \\
    \hline
    $\bmr{U}_h^1$ &  8   &  8  &  8  & 8  \\
    \hline
    $\bmr{S}_h^2$ & 13 & 12 & 12  & 12 \\
    \hline
    $\bmr{U}_h^2$ &  8   &  8  &  9  & 9  \\
    \hline
    $\bmr{S}_h^3$ & 26 & 11  & 12 & 12 \\
    \hline
    $\bmr{U}_h^3$ &  8   &  9  &  9  & 10  \\
    \hline\hline
  \end{tabular}
  \caption{Iteration steps for different parameter $\eta$ with $\alpha
  = 10$.}
  \label{tab:comppara10}
\end{table}

\begin{table}
  \renewcommand{\arraystretch}{1.3}
  \centering 
  \begin{tabular}{c|c|c|c|c}
    \hline\hline
    $\alpha=100, \eta = $ & $\qquad  20 \qquad$ & $\qquad  60 \qquad$
    & $\qquad 100 \qquad$ & $\qquad 300 \qquad$ \\
    \hline
    $\bmr{S}_h^1$ & $\infty$ & $\infty$ &  23 & 17 \\
    \hline
    $\bmr{U}_h^1$ &  12   &  13  &  12  & 12  \\
    \hline
    $\bmr{S}_h^2$ & $\infty$ & $\infty$ & $\infty$  & 23 \\
    \hline
    $\bmr{U}_h^2$ &  13   &  13  &  12  & 12  \\
    \hline
    $\bmr{S}_h^3$ & $\infty$ & $\infty$ & $\infty$ & 63 \\
    \hline
    $\bmr{U}_h^3$ &  14   &  14  &  13  & 13  \\
    \hline\hline
  \end{tabular}
  \caption{Iteration steps for different parameter $\eta$ with $\alpha
  = 100$.}
  \label{tab:comppara100}
\end{table}

%\begin{table}
  %\renewcommand{\arraystretch}{1.3}
  %\centering 
  %\begin{tabular}{p{0.8cm} | p{1.25cm} | p{2.3cm}|  p{2.3cm}|
    %p{2.3cm} | p{2.3cm} }
    %\hline\hline
    %space & $\eta$ & $20$ & $60$ & $100$ & $300$ \\
    %\hline
    %\multirow{2}{1cm}{$\bmr{S}_h^1$} & $\alpha = 1$ & 
    %not convergent & 25 & 12 & 12 \\
    %\cline{2-6}
    %& $\alpha = 100$ &
    %not convergent & not convergent  & 23 & 17 \\
    %\hline
    %\multirow{2}{1cm}{$\bmr{U}_h^1$} & $\alpha = 1$ & 
    %11 & 9  & 10  & 9 \\
    %\cline{2-6}
    %& $\alpha = 100$ & 
    %12 & 13  & 12  & 12 \\
    %\hline
    %\multirow{2}{1cm}{$\bmr{S}_h^2$} & $\alpha = 1$ & 
    %not convergent & 32 & 21 & 20 \\
    %\cline{2-6}
    %& $\alpha = 100$ &
    %not convergent & not convergent & not convergent & 23 \\
    %\hline
    %\multirow{2}{1cm}{$\bmr{U}_h^2$} & $\alpha = 1$ & 
    %9 & 12  & 13  & 12 \\
    %\cline{2-6}
    %& $\alpha = 100$ &
    %13 & 13 & 12 & 12 \\
    %\hline
    %\multirow{2}{1cm}{$\bmr{S}_h^3$} & $\alpha = 1$ & 
    %not convergent & not convergent & not convergent & 37 \\
    %\cline{2-6}
    %& $\alpha = 100$ &
    %not convergent & not convergent & not convergent & 63 \\
    %\hline
    %\multirow{2}{1cm}{$\bmr{U}_h^3$} & $\alpha = 1$ & 
    %14 & 14 & 14 & 12 \\
    %\cline{2-6}
    %& $\alpha = 100$ & 
    %14 & 14 & 13 & 13 \\
    %\hline\hline
  %\end{tabular}
  %\caption{Iteration steps for different initial guess and parameter
  %$\eta$.}
  %\label{tab:comppara}
%\end{table}

\begin{figure}
  \centering
  \includegraphics[width=0.3\textwidth]{./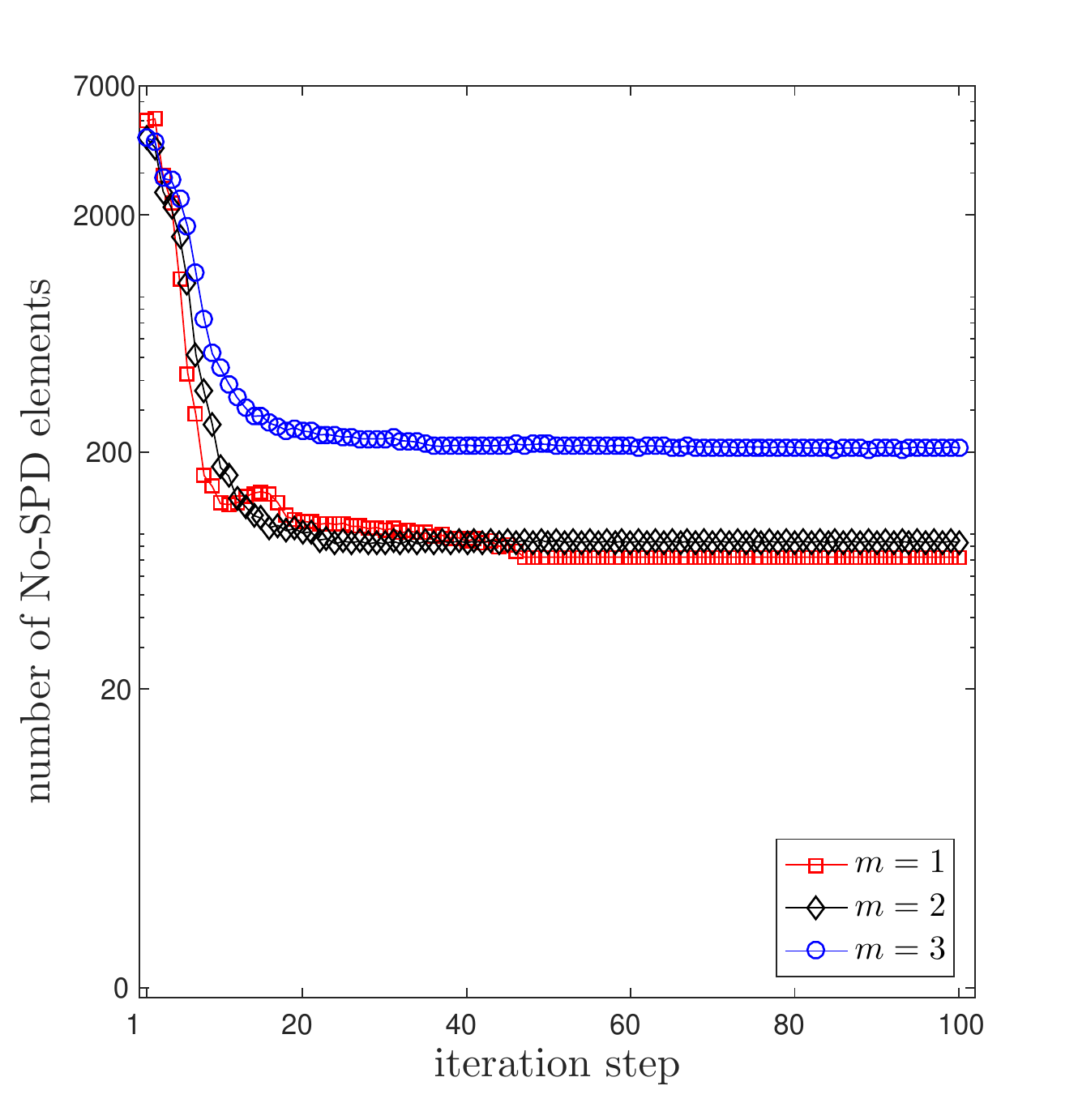}
  \hspace{5pt}
  \includegraphics[width=0.3\textwidth]{./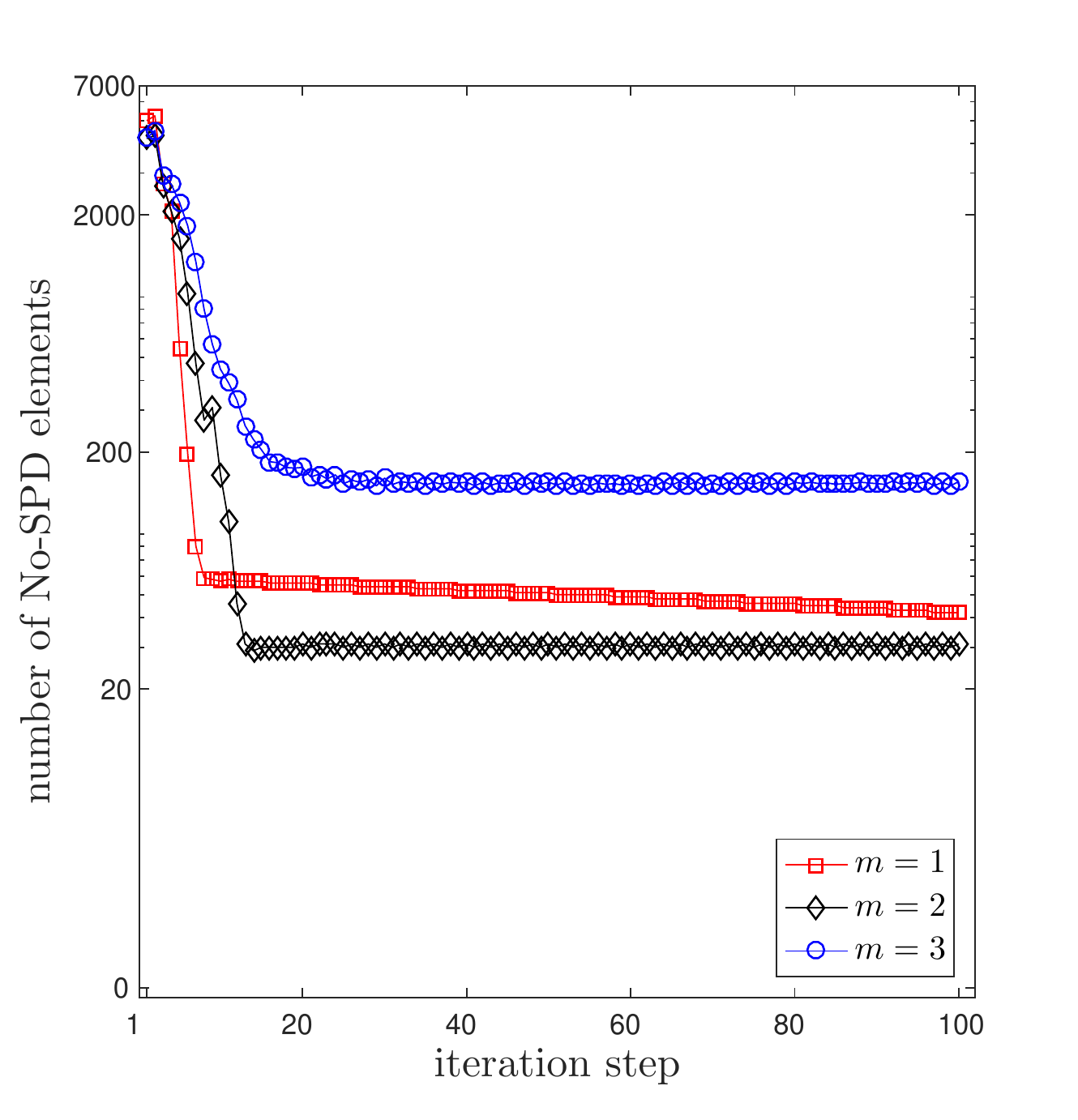}
  \hspace{5pt}
  \includegraphics[width=0.3\textwidth]{./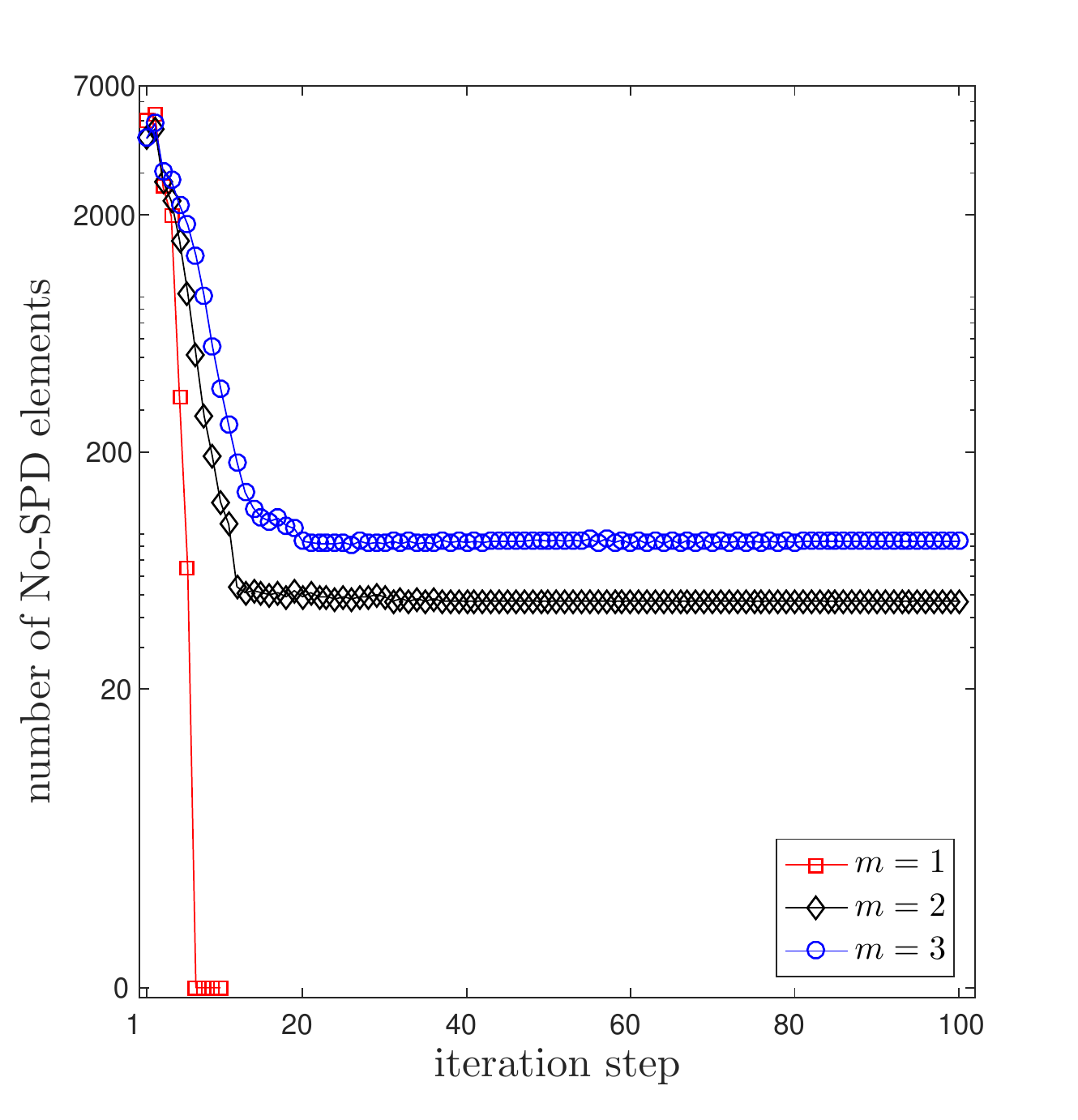}
  \caption{The number of Non-Convex elements at each iteration step for
  $\sigma = 20$ (left) / $\sigma = 60$ (middle) / $\sigma = 100$
  (right).}
  \label{fig:DGnospd}
\end{figure}

%%% Local Variables:
%%% mode: latex
%%% TeX-master: "article"
%%% End:

% vim:spell:tw=70:fo+=Mn:cc=70
\section{Conclusion}
\label{sec:conclusion}
A reconstructed discontinuous approximation method for the \MA
equation with classical solutions was proposed. We adopted a piecewise
irrotational polynomial space which is constructed by patch
reconstruction. With this space, it is able to solve a nonlinear
system with the Newton iteration to obtain an approximation to
the gradient at first, and then the primitive variable is solved by
using standard Lagrange finite element space. The optimal convergence
rate and the robustness demonstrated by the numerical examples are
quite attractive features of the proposed method in solving the \MA
equation.

\section*{Acknowledgements} 
This research was supported by the Science Challenge Project (No.
TZ2016002) and the National Science Foundation in China (No.
11971041).

% vim:spell:tw=70:fo+=Mn:cc=70
\begin{appendix}
  \section{}
  \label{sec:appendix}
  Below let us present some details of the program implementation
  to the reconstructed finite element space. We first outline the
  bases of the irrotational polynomial space $\bmr{S}(D)^k$. We 
  write $\bmr{S}(D)^k$ as $\bmr{S}(D)^k = \nabla V(D)^{k+1}$, where
  $V(D)^{k+1}$ is the polynomial space of degree $k+1$. This fact
  implies that we can obtain a base by taking the gradient of the base
  of $V(D)^{k+1}$, which reads
  \begin{displaymath}
    1, x, y, x^2, xy, y^2 , \ldots, xy^{k-1}, y^k.
  \end{displaymath}
  in two dimensions. For the case $k=1$, it is easy to obtain a
  base of $S(D)^1$, that is 
  \begin{equation}
    \vect{1}{0}, \quad \vect{0}{1}, \quad \vect{x}{0}, \quad
    \vect{0}{y}, \quad \vect{y}{x}. 
    \label{eq:k1base}
  \end{equation}
  Such a method can be easily extended to the case of three dimensions
  and the case of higher order accuracy.

  \begin{figure}[htp]
    \centering
    \begin{tikzpicture}[scale=1]
      \coordinate (A) at (1, 0); 
      \coordinate (B) at (-0.5, -0.6);
      \coordinate (C) at (-0.5, 0.8);
      \coordinate (D) at (1.2, 1.5);
      \coordinate (E) at (-2, 0);
      \coordinate (F) at (0.8, -1.5);
      \draw[fill, red] (A) -- (B) -- (C);
      \draw[thick, black] (A) -- (C) -- (B) -- (A);
      \draw[thick, black] (A) -- (D) -- (C);
      \draw[thick, black] (C) -- (E) -- (B);
      \draw[thick, black] (A) -- (F) -- (B);
      \node at(0, 0) {$K_0$}; \node at(1.7/3, 2.3/3) {$K_1$};
      \node at(1.3/3, -2.1/3) {$K_2$}; \node at(-1, 0.2/3) {$K_3$};
    \end{tikzpicture}
    \hspace{70pt}
    \begin{tikzpicture}[scale=1]
      \coordinate (A) at (1, 0); 
      \coordinate (B) at (-0.5, -0.6);
      \coordinate (C) at (-0.5, 0.8);
      \coordinate (D) at (1.2, 1.5);
      \coordinate (E) at (-2, 0);
      \coordinate (F) at (0.8, -1.5);
      \draw[fill, red] (A) -- (B) -- (C);
      \draw[thick, black] (A) -- (C) -- (B) -- (A);
      \draw[thick, black] (A) -- (D) -- (C);
      \draw[thick, black] (C) -- (E) -- (B);
      \draw[thick, black] (A) -- (F) -- (B);
      \draw[fill, black] (0, 0) circle [radius=0.03]; 
      \draw[fill, black] (1.7/3, 2.3/3)  circle [radius=0.03]; 
      \draw[fill, black] (1.3/3, -2.1/3) circle [radius=0.03];  
      \draw[fill, black] (-1, 0.2/3)     circle [radius=0.03];
      \node at(-0.1, -0.2) {$\bm{x}_{K_0}$}; 
      \node at(1.7/3-0.1, 2.3/3-0.2)  {$\bm{x}_{K_1}$};
      \node at(1.3/3-0.1, -2.1/3-0.2) {$\bm{x}_{K_2}$}; 
      \node at(-1-0.1, 0.2/3-0.2)     {$\bm{x}_{K_3}$};
    \end{tikzpicture}
    \caption{$K_0$ and its neighbours (left) / collocation points
    (right).}
    \label{fig:Kexample}
  \end{figure}
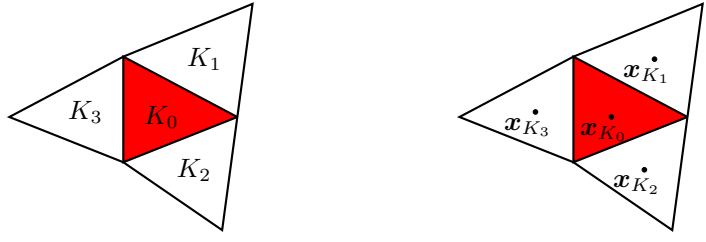
  Then we show the linear reconstruction process to present the
  details of the reconstructed space. We take $K_0$ in
  Fig.~\ref{fig:Kexample} as an illustration and we let its patch set
  $S(K_0)$ is formed by its face-neighbouring elements, $S(K_0) =
  \left\{ K_0, K_1, K_2, K_3 \right\}$ and $\mc{I}(K_0) = \left\{
  \bm{x}_{K_0},   \bm{x}_{K_1},  \bm{x}_{K_2},  \bm{x}_{K_3}
  \right\}(\bm{x}_{K_i} = (x_{K_i}, y_{K_i}))$. For a piecewise
  constant function $\bm{g} = (g^1, g^2) \in \bmr{U}_h^0$, the
  corresponding discrete least squares problem is 
  \begin{equation}
    \begin{aligned}
      \mathop{\arg \min}_{\bm{p} \in \bmr{S}(S(K_0))^1} \sum_{i =
      0}^4 \| \bm{p}(\bm{x}_i) - \bm{g}(\bm{x}_i) \|^2 \quad
        \text{s.t. }  \bm{p}(\bm{x}_{K_0}) = \bm{g}(\bm{x}_{K_0}). \\
    \end{aligned}
    \label{app:eq:ls}
  \end{equation}
  By \eqref{eq:k1base}, we could expand the $\bm{p}(\bm{x})$ as 
  \begin{displaymath}
    \bm{p}(\bm{x}) = a_0 \vect{1}{0} + a_1 \vect{0}{1} + a_2
    \vect{x - x_{K_0}}{0} + a_3 \vect{0}{y - y_{K_0}} + a_4 \vect{y -
    y_{K_0}}{x - x_{K_0}}.
  \end{displaymath}
  The constraint of the minimization problem directly gives the value
  of $a_0$ and $a_1$ and we rewrite $\bm{p}(\bm{x})$ as 
  \begin{displaymath}
    \begin{aligned}
      \bm{p}(\bm{x}) &= \vect{g^1(\bm{x}_{K_0})}{g^2(\bm{x}_{K_0})} +
      a_2 \vect{x - x_{K_0}}{0} + a_3 \vect{0}{y - y_{K_0} } + a_4
      \vect{y - y_{K_0}}{x - x_{K_0}}. \\
    \end{aligned}
  \end{displaymath}
  Hence, the problem \eqref{app:eq:ls} is equivalent to the following
  problem, 
  \begin{equation}
    \mathop{\arg \min}_{(a_2, a_3, a_4) \in \mb{R}^3} \sum_{i = 1}^3
    \left\| a_2 \vect{x_{K_i} - x_{K_0}}{0} + a_3 \vect{0}{y_{K_i} -
    y_{K_0}} + a_4 \vect{y_{K_i} - y_{K_0}}{x_{K_i} - x_{K_0}}
    - \vect{g^1(\bm{x}_{K_i}) - g^1(\bm{x}_{K_0})}{g^2(\bm{x}_{K_i}) -
    g^2(\bm{x}_{K_0}) } \right\|^2,
    \label{app:eq:lsa}
  \end{equation}
  and the solution to \eqref{app:eq:lsa} takes the form 
  \begin{displaymath}
    \vech{a_2}{a_3}{a_4} = (A^TA)^{-1}A^T  \begin{bmatrix} 
      g^1(\bm{x}_{K_1}) - g^1(\bm{x}_{K_0}) \\ 
      g^2(\bm{x}_{K_1}) - g^2(\bm{x}_{K_0}) \\
      g^1(\bm{x}_{K_2}) - g^1(\bm{x}_{K_0}) \\
      g^2(\bm{x}_{K_2}) - g^2(\bm{x}_{K_0}) \\
      g^1(\bm{x}_{K_3}) - g^1(\bm{x}_{K_0}) \\
      g^2(\bm{x}_{K_3}) - g^2(\bm{x}_{K_0}) \\ \end{bmatrix}, \quad A
      = \begin{bmatrix}
       x_{K_1} - x_{K_0} & 0 &  y_{K_1} - y_{K_0} \\
       0 &  y_{K_1} - y_{K_0} & x_{K_1} - x_{K_0} \\
       x_{K_2} - x_{K_0} & 0 &  y_{K_2} - y_{K_0} \\
       0 &  y_{K_2} - y_{K_0} & x_{K_2} - x_{K_0} \\
       x_{K_3} - x_{K_0} & 0 &  y_{K_3} - y_{K_0} \\
       0 &  y_{K_3} - y_{K_0} & x_{K_3} - x_{K_0} \\
      \end{bmatrix}.
  \end{displaymath}
  We rearrange the solution as 
  \begin{displaymath}
    \begin{bmatrix}
      a_0 \\ a_1 \\ a_2 \\ a_3 \\ a_4 \\
    \end{bmatrix} = \begin{bmatrix}
      I_{2 \times 2} & 0  \\
      -M I_{6 \times 2} & M \\
    \end{bmatrix} \begin{bmatrix}
      g^1(\bm{x}_{K_0}) \\ g^2(\bm{x}_{K_0}) \\ 
      g^1(\bm{x}_{K_1}) \\ g^2(\bm{x}_{K_1}) \\ 
      g^1(\bm{x}_{K_2}) \\ g^2(\bm{x}_{K_2}) \\ 
      g^1(\bm{x}_{K_3}) \\ g^2(\bm{x}_{K_3}) \\ 
    \end{bmatrix}, 
  \end{displaymath}
  where $I_{2 \times 2}$ is $2 \times 2$ identity matrix and $I_{6
  \times 2} = [I_{2 \times 2}, I_{2 \times 2},I_{2 \times 2}]^T $ and
  $M = (A^TA)^{-1}A^T$. We note that the matrix $M$ totally depends
  the collocation points located in $S(K_0)$ and according to the
  expansion \eqref{eq:writeR}, the matrix 
  \begin{displaymath}
    \wt{M} = \begin{bmatrix}
      I_{2 \times 2} & 0  \\
      -M I_{6 \times 2} & M \\
    \end{bmatrix} 
  \end{displaymath}
  contains all information of the function $\bm{\lambda}_{K_i}^j(0
  \leq i \leq 3, j = 1, 2)$ on the element $K_0$. Then we store the
  matrix $\wt{M}$ for all elements to represent our approximation
  space $\bmr{U}_h^1$. The procedure of this computer implementation
  could be adapted to the case of higher-order accuracy and high
  dimensions without any difficulties.

\end{appendix}

%%% Local Variables:
%%% mode: latex
%%% TeX-master: "article"
%%% End:

\bibliographystyle{amsplain}
\bibliography{../ref}

\end{document}